\begin{document}

\title{Estimating the Minimizer and the Minimum Value of a Regression Function under Passive Design}

\author{\name Arya Akhavan \email aria.akhavanfoomani@iit.it \\
      \addr Istituto Italiano di Tecnologia\\
      CREST, ENSAE, IP Paris
      \AND
      \name Davit Gogolashvili \email davit.gogolashvili@eurecom.fr \\
      \addr
      Data Science Department, EURECOM, France
      \AND
      \name Alexandre B. Tsybakov \email alexandre.tsybakov@ensae.fr \\
      \addr CREST, ENSAE, IP Paris
      }

\editor{ }

\maketitle

\begin{abstract}
We propose a new method for estimating the minimizer $\bx^*$ and the minimum value $f^*$ of a smooth and strongly convex regression function $f$ from the observations contaminated by random noise. Our estimator $\bz_n$ of the minimizer $\bx^*$ is based on a version of the projected gradient descent with the gradient estimated by a regularized local polynomial algorithm. Next, we propose a two-stage procedure for estimation of the minimum value $f^*$ of regression function $f$. At the first stage, we construct an accurate enough estimator of $\bx^*$, which can be, for example, $\bz_n$. At the second stage, we estimate the function value at the point obtained in the first stage using a rate optimal nonparametric procedure. We derive non-asymptotic upper bounds for the quadratic risk and optimization risk of $\bz_n$, and for the risk of estimating $f^*$. We establish minimax lower bounds showing that, under certain choice of parameters, the proposed algorithms achieve the minimax optimal rates of convergence  on the class of smooth and strongly convex functions.
\end{abstract}

\begin{keywords}
Nonparametric regression, Stochastic optimization, Minimax optimality,  Passive design, Local polynomial estimator
\end{keywords}

\section{Introduction}
Estimating the minimum value and the minimizer of an unknown function from observation of its noisy values on a finite set of points is a key problem in many applications. 
Let $D = \{\bx_1,\dots,\bx_n\}\subset \mathbb{R}^d$ be a  design set and let $\com$ be a compact and convex subset of $ \mathbb{R}^d$. Assume that we observe noisy values of an unknown regression function $f:\mathbb{R}^d\to \mathbb{R}$ at points of the design set:
\begin{equation}\label{mainmodel}
    y_i = f(\bx_i)+\xi_i,  \quad i=1,\dots,n,
\end{equation}
where $\xi_i$'s are independent zero mean errors. Our goal is to estimate the minimum value of the regression function $f^* = \min_{\bx\in \com}f(\bx)$ and its location $\bx^* = \argmin_{\bx\in \com}f(\bx)$ when $\bx^*$ is unique. As accuracy measures of an estimator $\hat \bx_n$  of $\bx^*$ we consider the optimization risk $\Exp[f(\hat \bx_n) - f^*]$ and the quadratic risk 
$\Exp[\norm{\hat \bx_n - \bx^*}^2]$, where $\Vert\cdot\Vert$ denotes the Euclidean norm. The accuracy of an estimator $T_n$ of $f^*$ will be measured by the risk $\Exp\vert T_n- f^*\vert$. We will assume that $f$ belongs to the class of $\beta$-H{\"o}lder smooth and strongly convex functions with $\beta\ge 2$ (see Section \ref{sec:definitions} for the definitions). 

The existing literature considers two different assumptions on the choice of the design. Under the {\em passive} design setting, the points $\bx_i$ are sampled independently from some probability distribution. Under the {\em active} (or sequential) design setting, for each $i$ the statistician can plan the experiment by selecting the point $\bx_i$ depending on the previous queries and the corresponding responses $\bx_1,y_1,\dots, \bx_{i-1},y_{i-1}.$ The accuracy of estimation under the active design is at least as good as under the passive design but it can be strictly better, which is the case for the problems considered here. 

{\bf Active design, estimation of $\bx^*$.}
Active (or sequential) scheme has a long history  starting at least from the seminal work of \cite{kiefer1952stochastic} where an analog of the Robbins-Monro algorithm was introduced to estimate the minimizer $\bx^*$ of a univariate function $f.$ The idea of the Kiefer-Wolfowitz (KW) method is to approximate the derivative of $f$ using first order differences of $y_i$'s and plug this estimator in the gradient algorithm. \cite{kiefer1952stochastic} proved convergence in probability of the KW algorithm under some regularity conditions on the regression function. A multivariate extension of the KW algorithm was proposed by \cite{blum1954multidimensional}. Convergence rates of the KW algorithm for $d=1$ were investigated in \cite{dupavc1957kiefer} proving an upper bound on the quadratic risk of the order $n^{-2/3}$ for $\beta=3$. 
By using suitably chosen linear combinations of first order differences to approximate the gradient, \cite{fabian1967stochastic}  proved the existence of a method that attains, for odd integers $\beta\ge 3$, the quadratic risk of the order $n^{-(\beta-1) /\beta}$ for functions $f$ with bounded $\beta$th partial derivatives. The method of \cite{fabian1967stochastic} uses $(\beta-1)/2$ evaluations $y_i$ at every step  of the algorithm in order to approximate the gradient.  \cite{chen1988lower} and \cite{PT90} have established minimax lower bounds for the estimation risk on the class of $\beta$-H{\"o}lder smooth and strongly convex functions $f$,  for all $\beta \geq 2.$ For the quadratic risk, these bounds are of the order  $n^{-(\beta-1)/\beta}$. \cite{PT90} proposed a new class of  methods using smoothing kernels and randomization to approximate the gradient. This constitutes an alternative to the earlier used deterministic schemes derived from finite differences. \cite{PT90} proved that such randomized methods attain the minimax optimal rate  $n^{-(\beta-1)/\beta}$ on the above classes for all $\beta \geq 2$ and not only for odd integers $\beta\ge3$.  An  additional advantage over Fabian's algorithm is the computational simplicity of these methods. In particular, they require at each step only one or two  evaluations of the function. For subsequent developments on similar methods, we refer to  \cite{dippon2003accelerated,BP2016,akhavan2020, akhavan2021distributed,akhavan2023}, where one can find further references.  

{\bf Active design, estimation of $f^*$.} The problem of estimating $f^*$ under the active scheme was first considered by
\cite{mokkadem2007companion} who suggested a recursive estimator and proved its asymptotic normality with $\sqrt{n}$ scaling. \cite{belitser} defined an estimator of $f^*$ via a multi-stage procedure whose complexity increases exponentially with the dimension $d$, and showed that this estimator achieves  (asymptotically, for $n$ greater than an exponent of $d$) the $O_p(1/\sqrt{n})$ rate when $f$ is $\beta$-H{\"o}lder and strongly convex with $\beta> 2$. \cite{akhavan2020} improved upon this result by constructing a simple computationally feasible estimator $\hat f_n$ such that $\Exp\vert\hat f_n- f^*\vert=O(1/\sqrt{n})$ for $\beta\geq 2$. It can be easily shown that the rate $1/\sqrt{n}$ cannot be further improved when estimating $f^*$. Indeed, using the oracle that puts all the queries at the unknown true minimizer $\bx^*$ one cannot achieve better rate under the Gaussian noise. 

{\bf Passive design, estimation of $\bx^*$.}
The problem of estimating the minimizer  $\bx^*$  under the i.i.d. passive design was probably first studied in \cite{hardle1987nonparametric}, where some consistency and asymptotic normality results were discussed. \cite{tsybakov1990passive} proposed to estimate $\bx^*$ by a recursive procedure using local polynomial approximations of the gradient.  Considering the class of  strongly convex and  $\beta$-H{\"o}lder ($\beta \geq 2$) regression functions $f$, \cite{tsybakov1990passive} proves that the minimax optimal rate of estimating $\bx^*$ on the above class of functions is $n^{-(\beta-1)/(2\beta+d)}$, and shows that the proposed estimator attains this optimal rate. However, in order to define this estimator, one needs  to know of the marginal density of the design points that may be inaccessible in practice. 

There was also some work on estimating $\bx^*$ in different passive design settings.  Several papers  are analyzing estimation of $\bx^*$ in a passive scheme, where $\bx_i$'s are given non-random points in $[0,1]$ (\cite{muller1985kernel,muller1989adaptive}) or in $[0,1]^d$  (\cite{facer2003nonparametric}). Another line of work (\cite{hardle1987nonparametric, nazin-polyak-tsybakov-1989,tsybakov1990passive, nazin-polyak-tsybakov}) is to consider the problem of estimating the zero of a nonparametric regression function under i.i.d. design, also called passive stochastic approximation when recursive algorithms are used. \cite{nazin-polyak-tsybakov-1989,tsybakov1990passive, nazin-polyak-tsybakov} establish minimax optimal rates for this problem and propose passive stochastic approximation algorithms attaining these rates.  Application to transfer learning is recently developed in \cite{passive2022}, where one can find further references on passive stochastic approximation.

{\bf Passive design, estimation of $f^*$.}
To the best of our knowledge, the problem of estimating $f^*$ under i.i.d. passive design was not studied. However, there was some work on a related and technically slightly easier problem of estimating the maximum of a function observed under the Gaussian white noise model in dimension $d=1$ (\cite{Ibra,Lepski1993}). Extrapolating these results to the regression model and general $d$ suggests that the optimal rate of convergence for estimating $f^*$ on the class of $\beta$-H{\"o}lder regression functions $f$ is of the order $(n/\log n)^{-\beta/(2\beta+d)}$. It is stated as a conjecture in \cite{belitser-correction} for the passive model with equidistant deterministic design. It remains unclear whether this conjecture is true since, for higher dimensions, the effect of the equidistant grid induces an additional bias. However, we prove below that, under the i.i.d. random design, the minimax optimal rate on the class of  $\beta$-H{\"o}lder functions (without strong convexity) is indeed $(n/\log n)^{-\beta/(2\beta+d)}$. 
We are not aware of any results on estimation of $f^*$  
on the class of  $\beta$-H{\"o}lder and strongly convex   regression functions $f$, which is the main object of study in the current work.

Finally, we review some results on a related problem  of estimating the mode of a probability density function. There exists an extensive literature on this problem.  In the univariate case, \cite{parzen1962estimation} proposed the maximizer of kernel density estimator (KDE) as an estimator for the mode. Direct estimate of the mode based on order statistics was proposed by \cite{grenander1965some}, where the consistency of the proposed method was shown. Other estimators of the mode in the univariate case were considered by \citep{chernoff1964estimation,dalenius1965mode,venter1967estimation}.  The minimax rate of mode estimation  on the class of $\beta$-H{\"o}lder densities that are strongly concave near the maximum was shown to be $n^{-(\beta-1)/(2\beta+d)}$ in  \cite{tsybakov1990mode}, where the optimal recursive algorithm was introduced. It generalizes an earlier result of \cite{hasminskii1979lower} who considered the special case $d=1, \beta =2$ and derived the minimax lower bound of the order $n^{-1/5}$ matching the upper rate provided by \cite{parzen1962estimation}.  \cite{klemela2005adaptive} proposed to use the maximizer of KDE with the smoothing parameter chosen by the Lepski method \citep{lepskii1991problem}, and showed that this estimator  achieves optimal adaptive rate of convergence.
\cite{dasgupta2014optimal} proposed minimax optimal estimators of the mode based on $k$-nearest neighbor density estimators, emphasizing the implementation ease of the method. Computational complexity of mode estimation was investigated by  \cite{arias2022estimation} showing the impossibility of a minimax optimal algorithm with sublinear computational complexity. It was shown that the maximum of a histogram, with a proper choice of bandwidth, achieves the minimax rate while running in linear time.
Bayesian approach to the mode estimation was developed by \cite{yoo2019bayesian}.

\paragraph{Contributions.} In this paper, we consider the model described at the beginning of this section under the i.i.d. passive observation scheme.  The contributions of the present work can be summarized as follows.   
\begin{itemize}
\item Assuming that $f$ belongs to the class of $\beta$-H{\"o}lder and strongly convex regression functions we construct a recursive estimator of the minimizer $\bx^*$ adaptive to the unknown marginal density of $\bx_i$'s and achieving the minimax optimal rate $n^{-(\beta-1)/(2\beta+d)}$, for $\beta\ge 2$, up to a logarithmic factor. 
\item We show that the minimax optimal rate for the problem of estimating the minimum value $f^*$ of function $f$ on the above class of functions scales as $n^{-\beta/(2\beta+d)}$, and we  propose an algorithm achieving this optimal rate for $\beta> 2$. 
\item 
We prove that the minimax optimal rate of estimating $f^*$ on the class of  $\beta$-H{\"o}lder functions (without strong convexity) is of the order $(n/\log n)^{-\beta/(2\beta+d)}$. 
Thus, dropping the assumption of strong convexity causes a deterioration of the minimax rate only by a logarithmic factor. It suggests that strong convexity is not a crucial advantage in estimation of the minimum value of a function under the passive design. 
\end{itemize}
Given our results, we have the following table summarizing the minimax optimal rates for estimation under the active and passive design.   
\begin{table}[h]
\centering
\begin{tabular}{@{}lllll@{}}
\toprule
                       & $\text{rate of quadratic risk, estimation of} \,\, \bx^*$                                                   & ${}$  & ${\text{rate of estimating }f^*}\phantom{1000000000}$ \\ \midrule
\texttt{$\phantom{10}$ passive scheme} & $\phantom{10000000}{n^{-\frac{2(\beta-1)}{2\beta+d}}}$   & ${}$   & $\phantom{1000}n^{-\frac{\beta}{2\beta+d}}$ \\[.3cm]
\texttt{$\phantom{10}$ active scheme} & $\phantom{10000000}n^{-\frac{\beta-1}{\beta}}$       & ${}$ & $\phantom{100000}n^{-\frac{1}{2}}$ 
\end{tabular}
\caption{Comparisons between the rates of convergence for passive and active schemes {in the class of $\beta$-Hölder and strongly convex functions}}\label{table:comparisons}
\end{table}
We note that the convergence rates for the passive scheme suffer from the curse of dimensionality, while the rates for the active scheme are independent of the dimension. 

\paragraph{Notation.} In all the theorems, where the rates contain $\log(n)$, we assume that $n\ge 2$.
We denote by ${\mathbf E}_f$ the expectation with respect to the distribution of $(\bx_i,y_i)_{i=1}^n$ satisfying the model \eqref{mainmodel}; we also abbreviate this notation to ${\mathbf E}$ when there is no ambiguity.
Vectors are represented by bold symbols while uppercase English letters are used to denote matrices. We  denote by $\|\cdot\|$ the Euclidean norm, and by $\left\|\cdot\right\|_{\op}$ the operator norm, i.e., for a matrix $A$ we have $\left\|A\right\|_{\op} = \sup_{\|\bu\| \leq 1} \left\|A \bu \right\|.$ We denote the smallest eigenvalue of a square matrix $U$ by $\lambda_{\min}\left(U\right)$. For any $m \in \mathbb{N}$, we denote by $[m]$ the set that contains all positive integers $k$, such that $1\leq k\leq m$. For $\beta \in \mathbb{R}_{+},$  let $\lfloor\beta\rfloor$ be the biggest integer smaller than 
$\beta.$ Let $S$ denote the number elements in the set $\{\bm:|\bm| \leq \ell\}$, where $\bm$ is a $d$-dimensional multi-index. For $\bu \in \mathbb{R}^d$, let $U(\bu) = \left(\frac{\bu^{\bm^{(1)}}}{\bm^{(1)}},\dots, \frac{\bu^{\bm^{(S)}}}{\bm^{(S)}}\right)^{\top},$ where the numbering is such that $\bm^{(1)}=(0, \ldots, 0), \bm^{(2)}=(1,0, \ldots, 0), \ldots, \bm^{(d+1)}=(0, \ldots, 0,1).$ For $d$-dimensional multi-index $\bm=\left(m_1, \ldots, m_d\right)$, where $m_{j} \geq 0$ are integers, we define the absolute value $|\bm|=m_{1}+\ldots+m_{d}$, the factorial $\bm !=m_1 ! \ldots m_d !$, the power $\bu^{\bm}=u_1^{m_1} \ldots u_d^{m_d}$ and the differentiation operator $D^{\bm}=\frac{\partial^{|\bm|}}{\partial u_1^{m_1} \ldots \partial u_{d}^{m_d}}$. We denote by $\proj_{\com}(\bx)$ the Euclidean projection of $\bx\in \mathbb{R}^{d}$ onto $\com$.

\section{Definitions and assumptions}
\label{sec:definitions}

We first introduce the class of $\beta$-H{\"o}lder functions that will be used throughout the paper.
For $\beta,L>0$,  we denote by $\mathcal{F}_{\beta}(L)$ the  class of $\ell=\lfloor\beta\rfloor$ times differentiable functions $f: \mathbb{R}^d \rightarrow \mathbb{R}$ such that
\[
\left|f(\bx)-\sum_{|\bm| \leq l} \frac{1}{\bm !} D^{\bm} f(\bx)(\bx-\bx^{\prime})^{\bm} \right| \leq L\|\bx-\bx^{\prime}\|^{\beta}, \quad \forall \bx, \bx^{\prime} \in \mathbb{R}^{d}.
\]

Our estimators will be based on kernels satisfying the following assumption. 

\begin{assumption}\label{ass:kernel}
The kernel $K:\mathbb{R}^d\to \mathbb{R}$ has a compact support $\Supp(K)$ contained in the unit Euclidean ball, and satisfies the conditions
\begin{align*}
    K(\bu)\geq 0, \quad \quad \int K(\bu)\drm\bu = 1, \quad\quad \sup_{\bu \in \mathbb{R}^d}K(\bu) < \infty\enspace.
\end{align*}
Furthermore, we assume that $K$ is a $L_K$-Lipschitz function, that is, for any $\bx,\by \in \mathbb{R}^d$ we have
\begin{align*}
    |K(\bx) - K(\by)| \leq L_K\norm{\bx-\by}.
\end{align*}
\end{assumption}

\begin{assumption}\label{assnoise}
It holds for all $i,i'\in [n]$, that: (i) $\xi_i$ and $\bx_{i'}$ are independent; (ii) $\Exp[\xi_i] = 0$; (iii) there exists $\sigma>0$ such that  $\Exp\left[\exp(\xi_i/\sigma)\right] < \infty$ (sub-exponential assumption).
\end{assumption}
\begin{assumption}\label{mainassump} We consider model \eqref{mainmodel} with $f:\mathbb{R}^d \rightarrow \mathbb{R}$ satisfying the following assumptions.
\begin{enumerate}
\itemsep0em
\item[(i)] The function $f$ attains its minimum on $\com$ at point $\bx^*$. 
\item[(ii)] The function $f$ belongs to H{\"o}lder functional class $\mathcal{F}_{\beta}(L)$ with $\beta\geq 2.$
\item[(iii)] There exists $\alpha>0$ such that the function $f$ is $\alpha$-strongly convex on $\com$ i.e. for any $\bx,\by\in\com$, it satisfies $$f(\by) \geq f(\bx) + \langle\nabla f(\bx), \by-\bx\rangle + \frac{\alpha}{2}\norm{\bx - \by}^2\enspace.$$
\item[(iv)] The function $f$ is uniformly bounded on the set $\com' = \{\bx + \by: \bx\in \com\quad \text{and}\quad \Vert \by\Vert \le 1 \}$, that is, $\sup_{\bx \in \com'}|f(\bx)| \leq M$, where $M$ is a constant.
\end{enumerate}
\end{assumption}

We denote by $\mathcal{F}_{\beta,\alpha}(L)$ the class of regression functions $f$ satisfying Assumption \ref{mainassump}. 

Next, we introduce an assumption on the  distribution
of $\bx_i$'s. 
\begin{assumption}\label{distass}
The random vectors $\bx_1,\dots,\bx_n$ are i.i.d. with distribution admitting a density $p(\cdot)$ with respect to the Lebesgue measure such that
$$
0<p_{\min} \leq p(\bx) \leq p_{\max}<\infty, \quad \forall \bx \in \com'.
$$
\end{assumption}
{Throughout this paper, we use the notation $\cst$, $\cst_i$, $i=0,1,\dots$ for positive constants that can only depend on $d$, $\com$, $\beta$, $L$, $M$, $p_{\max}$, $p_{\min}$, $K$, and $\sigma$, where the dependence on $d$ is at most of polynomial order with the degree of the polynomial only depending on $\beta$. The values of $\cst$ and $\cst_i$ can vary from line to line. We note that the dependence on the strong convexity parameter $\alpha$ is not included in the constants $\cst$ and $\cst_i$ since we explicitly specify it in the upper and lower bounds.}

\section{Estimating the minimizer}

We estimate the minimizer $\bx^*$ via an  approximation of the gradient algorithm, where we replace the gradient $\nabla f(\bz)$ by its local polynomial estimator. 
 The objective function $f \in \mathcal{F}_{\beta}(L)$ in model \eqref{mainmodel} can be well approximated by its Taylor polynomial of order $\ell$ in the neighbourhood of the target point $\bz$,
\[
f(\bx) \approx \sum_{|\bm| \leq \ell} \frac{1}{\bm !} D^{\bm} f(\bz)(\bz-\bx)^{\bm} = \btheta^{\top}(\bz)\bU\left(\frac{\bx-\bz}{h}\right)\enspace,
\]
where $\bx$ is sufficiently close to $\bz$ and, for $h>0$, 

\begin{align}\label{co-vector}
\bU(\bu) &= \left(\frac{\bu^{\bm^{(1)}}}{\bm^{(1)}!},\dots, \frac{\bu^{\bm^{(S)}}}{\bm^{(S)}!}\right)^{\top}, \
\btheta(\bz) =\left(h^{|\bm^{(1)}|}D^{\bm^{(1)}}f(\bz),\dots,h^{|\bm^{(S)}|}D^{\bm^{(S)}}f(\bz) \right)^{\top}.
\end{align}
The {\em local polynomial estimator} of $\btheta(\bz)$ (see, e.g., \cite{stone1980}, \citep[Section 1.6]{Tsybakov09}) is defined as follows:
\[
\hat{\btheta}_k(\bz)\in \argmin_{\btheta \in \mathbb{R}^{S}} \sum_{i=1}^{k}\left[y_i-\btheta^{\top} \bU\left(\frac{\bx_{i}-\bz}{h}\right)\right]^{2} K\left(\frac{\bx_i-\bz}{h}\right)\enspace,
\]
where $K:\mathbb{R}^d\to \mathbb{R}$ is a kernel satisfying Assumption  \ref{ass:kernel}. 
Let the matrix $B_k(\bz)$ and the vector $\bD_k(\bz)$ be defined as
$$
\begin{gathered}
B_k(\bz)=\frac{1}{k h^d} \sum_{i=1}^{k} \bU\left(\frac{\bx_i - \bz}{h}\right)\bU^{\top}\left(\frac{\bx_i - \bz}{h}\right)K\left(\frac{\bx_i - \bz}{h}\right)\enspace, \\
\bD_k(\bz)=\frac{1}{k h^d} \sum_{i=1}^{k} y_{i} \bU\left(\frac{\bx_{i}-\bz}{h}\right) K\left(\frac{\bx_{i}-\bz}{h}\right).
\end{gathered}
$$
If the matrix $B_k(\bz)$ is invertible we have  
\[
\hat{\btheta}_k(\bz) = B_k(\bz)^{-1}\bD_k(\bz)
\] 
and an estimator for $\nabla f(\bz)$ can be defined in the form
\begin{equation}\label{estimatedgradient}
    \bg_k(\bz) = 
    \frac{1}{h} A \hat{\btheta}_k(\bz),
\end{equation}
where $A$ is the matrix with elements 
\begin{align*}
    A_{i,j} = \begin{dcases}
     1, &\text{if } \quad j = i+1\\
    0, &\text{otherwise},
    \end{dcases}
\end{align*}
for $i\in [d],$ and $j\in[S]$. 

Since $B_k(\bz)$ is not necessarily invertible, instead of using 
the estimator \eqref{estimatedgradient} we consider its regularized version. Namely, we add a regularization constant $\lambda>0$ to the diagonal entries of $B_k(\bz)$ and define $B_{k,\lambda}(\bz) = B_{k}(\bz) + \lambda \mathbf{I}$, where $\mathbf{I}$ is the identity matrix.  This leads to the following regularized estimator of the gradient:
\begin{equation}\label{estimatedgradientreg}
\bg_{k,\lambda}(\bz) = \frac{1}{h} A \hat{\btheta}_{k,\lambda}(\bz) := \frac{1}{h} A (B_k(\bz)+\lambda \mathbf{I})^{-1}\bD_k(\bz).
\end{equation}
The corresponding approximate gradient descent procedure is presented as Algorithm \ref{algo}. It outputs  $\bz_k$ that will be used as an estimator of $\bx^*$.
At round $k$ of Algorithm \ref{algo}, the matrix $B_{k,\lambda}(\bz_k) = B_{k}(\bz_k) + \lambda \mathbf{I} $ and the vector $\bD_k(\bz_k)$ can be computed recursively based on the first $k$ observations. {An advantage of this method is its recursive construction enabling Algorithm \ref{algo} to handle incoming i.i.d. data points $(\bx_k,y_k)$ that are received sequentially in an online fashion.} 

\begin{algorithm}[t!]
\caption{Passive Zero-Order Projected Gradient {Descent}}\label{algo}
\begin{algorithmic}
\State{\bfseries Requires} ~ Kernel $K :\mathbb{R}^d\rightarrow \mathbb{R}$, step sizes $\eta_k>0$, parameters $h_k =  \left(\frac{\log(k+1)}{k}\right)^{\frac{1}{2\beta+d}}$ and $\lambda_k =  \left(\frac{\log(k+1)}{k}\right)^{\frac{\beta}{2\beta+d}},$ for $k\in[n]$.
\vspace{.1cm}
\State {\bfseries Initialization} ~ Choose $\bz_1\in \com$, and set {$\eta_k = \frac{2}{\alpha k}$}, for $k\in[n]$.
\vspace{.1cm}
\State {\bfseries For}  $k\in[n]$
\vspace{.1cm}
\State \qquad {1.} ~~~Compute $\bg_{k,\lambda_k}(\bz_k) = h_{k}^{-1}\left(A{B_{k,\lambda_k}(\bz_k)}^{-1}\bD_k(\bz_k)\right).$
\vspace{.05cm}
\State \qquad 2. ~~~Update $\bz_{k+1} =\proj_{\com}\left(\bz_k - \eta_k\bg_{k,\lambda_k}(\bz_k)\right).$
\vspace{.05cm}
\State {\bfseries Return} ~ $(\bz_k)_{k=1}^n$
\end{algorithmic}
\end{algorithm}



The following theorem gives a bound on the estimation error of Algorithm \ref{algo}.

\begin{theorem}\label{pointest}
Let Assumptions \ref{ass:kernel} -- \ref{distass} hold and $\nabla f(\bx^*)=0$. Then, for $\bz_n$ generated by Algorithm \ref{algo} we have
\begin{equation}\label{main_bound_for_minimizer}
        \Exp[\norm{\bz_n - \bx^*}^2] \leq \cst\min \left(1, \left(\frac{\log(n)}{n}\right)^{\frac{2(\beta-1)}{2\beta+d}}\alpha^{-2}\right)\enspace.
\end{equation}
\end{theorem}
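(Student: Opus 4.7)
The plan is to combine the standard projected gradient descent analysis with a bias--variance bound for the regularized local polynomial gradient estimator, then to solve the resulting recursion.

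Non-expansiveness of $\proj_\com$, together with the decomposition $\br_k:=\bg_{k,\lambda_k}(\bz_k)-\nabla f(\bz_k)$, yields
$$
\|\bz_{k+1}-\bx^*\|^2 \le \|\bz_k-\bx^*\|^2 - 2\eta_k\langle\nabla f(\bz_k),\bz_k-\bx^*\rangle + 2\eta_k\langle\br_k,\bx^*-\bz_k\rangle + \eta_k^2\|\bg_{k,\lambda_k}(\bz_k)\|^2.
$$
Strong convexity with $\nabla f(\bx^*)=0$, Young's inequality, and $\eta_k=2/(\alpha k)$ produce, for $a_k:=\Exp\|\bz_k-\bx^*\|^2$,
$$
a_{k+1} \le \Bigl(1-\tfrac{2}{k}\Bigr)a_k + \tfrac{2}{\alpha^2 k}\Exp\|\br_k\|^2 + \tfrac{4}{\alpha^2 k^2}\Exp\|\bg_{k,\lambda_k}(\bz_k)\|^2.
$$
All that remains is to prove $\Exp\|\br_k\|^2 \le \cst(\log k/k)^{2(\beta-1)/(2\beta+d)}$ together with $\Exp\|\bg_{k,\lambda_k}(\bz_k)\|^2=O(1)$, and then to solve this recursion.

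For fixed $\bz\in\com$ I would start from the identity
$$
\hat\btheta_{k,\lambda_k}(\bz)-\btheta(\bz) = (B_k(\bz)+\lambda_k\mathbf{I})^{-1}\bigl(\bD_k(\bz)-B_k(\bz)\btheta(\bz)\bigr) - \lambda_k(B_k(\bz)+\lambda_k\mathbf{I})^{-1}\btheta(\bz).
$$
On the ``good event'' $\mathcal E$ on which $\lambda_{\min}(B_k(\bz))\ge c_0>0$ uniformly in $\bz\in\com$, the operator $(B_k+\lambda_k\mathbf{I})^{-1}$ has norm $O(1)$; the term $\bD_k-B_k\btheta$ splits into a Taylor-remainder bias of squared order $h_k^{2\beta}$ (via the Hölder estimate $|f-\mathrm{Taylor}|\le L\|\bx_i-\bz\|^\beta\le Lh_k^\beta$ on $\Supp K$) and a centred noise variance of order $1/(kh_k^d)$ controlled by the sub-exponential tail in Assumption \ref{assnoise}; the regularization piece contributes $O(\lambda_k^2)$. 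Multiplying by $h_k^{-2}$ to convert $\hat\btheta_{k,\lambda_k}$ into $\bg_{k,\lambda_k}$ and substituting the prescribed $h_k=(\log k/k)^{1/(2\beta+d)}$ and $\lambda_k=(\log k/k)^{\beta/(2\beta+d)}$ balances the three contributions at exactly $(\log k/k)^{2(\beta-1)/(2\beta+d)}$.

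The main obstacle is establishing that $\mathcal E$ really has overwhelming probability, uniformly over $\bz\in\com$. Assumption \ref{distass} gives $\lambda_{\min}(\Exp B_k(\bz))\ge 2c_0$; a matrix Bernstein inequality for the sum defining $B_k(\bz)$ yields pointwise concentration, and the Lipschitz property of $K$ (Assumption \ref{ass:kernel}) lets me pass to uniform control through an $\varepsilon$-net argument on $\com$. The logarithmic factor in $h_k$ is precisely what creates enough slack for this union bound to give $\Pr(\mathcal E^c)\le k^{-q}$ for arbitrarily large $q$. On $\mathcal E^c$ the trivial deterministic bound $\|(B_k+\lambda_k\mathbf{I})^{-1}\|_{\op}\le\lambda_k^{-1}$ combined with sub-exponential moments of $\xi_i$ and $\mathrm{diam}(\com)<\infty$ makes the bad-event contribution of strictly smaller order than the target. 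The same uniformity in $\bz$ lets me replace the deterministic argument $\bz$ by the random iterate $\bz_k\in\com$, thereby circumventing the dependence of $\bz_k$ on $(\bx_i,y_i)_{i<k}$. Plugging the gradient-error bound into the recursion and using the standard estimate $\prod_{j=k+1}^{n-1}(1-2/j)\asymp(k/n)^2$, a telescoping computation gives $a_n\le\cst\alpha^{-2}(\log n/n)^{2(\beta-1)/(2\beta+d)}$, and the $\min$ with $1$ follows from the deterministic bound $\|\bz_n-\bx^*\|^2\le\mathrm{diam}(\com)^2$.
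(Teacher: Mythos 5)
Your proof follows the same overall strategy as the paper's: a projected gradient recursion, a uniform-over-$\com$ bias/variance bound for $\bg_{k,\lambda_k}$, Bernstein concentration plus an $\epsilon$-net, and a telescoping solution. The technical organization differs in two ways worth noting. First, you condition on a good event $\mathcal E=\{\lambda_{\min}(B_k)\ge c_0 \text{ uniformly over }\com\}$ and use the deterministic bound $\|(B_k+\lambda_k\mathbf I)^{-1}\|_{\op}\le\lambda_k^{-1}$ on $\mathcal E^c$; the paper instead proves moment bounds directly, e.g. $\Exp[\sup_{\bx\in\com}\|B_{k,\lambda}(\bx)^{-1}\|_{\op}^4]\le\cst\lambda_{\min}^{-4}$ in Lemma~\ref{boundBsup}, then applies Cauchy--Schwarz. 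The two routes are interchangeable; your ridge-regression identity $\hat\btheta_{k,\lambda}-\btheta=(B_k+\lambda\mathbf I)^{-1}(\bD_k-B_k\btheta)-\lambda(B_k+\lambda\mathbf I)^{-1}\btheta$ is in fact a slightly cleaner way to expose the Taylor-remainder, noise, and regularization contributions than the three-term split through $E_k(\bz)$ in Lemma~\ref{bias1}. Second, the paper uses Young with $\gamma=3\alpha/2$ to get a $(1-1/k)$ recursion and cites Lemma~\ref{lemm:akhavan_recursive}, while your $\gamma=\alpha$ gives $(1-2/k)$ and a direct telescope via $\prod_{j=k+1}^{n-1}(1-2/j)\asymp(k/n)^2$; both yield the stated rate. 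One small misstatement: the logarithm in $h_k$ is not needed to make $\Pr(\mathcal E^c)$ polynomially small --- already with $h_k\asymp k^{-1/(2\beta+d)}$ one has $kh_k^d\asymp k^{2\beta/(2\beta+d)}$, so Bernstein gives $\exp(-ck^{2\beta/(2\beta+d)})$ tails. Its actual role is to absorb the extra $\log k$ factor that the $\epsilon$-net union bound puts on the variance term, so that bias and variance balance at exactly $(\log k/k)^{2(\beta-1)/(2\beta+d)}$; since you do use the prescribed $h_k$, this imprecision does not affect the validity of the argument.
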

The proof of Theorem \ref{pointest}
uses the strong convexity of $f$ to obtain an upper bound for the conditional expectation $\Exp[\|\bz_{k+1}-\bx^*\|^2|\bz_k]$, which depends on the bias term $\norm{\Exp[\bg_{k,\lambda_k}(\bz_k)|\bz_k] - \nabla f(\bz_k)}$ and on the ``variance" term $\Exp\left[\norm{\bg_{k,\lambda_k}(\bz_k)}^2|\bz_k\right]$.  These two terms need to be controlled uniformly over $\bz_k \in \Theta$, which is achieved using Lemma \ref{errorgrad}. The uniformity is the reason why the bound \eqref{main_bound_for_minimizer} includes an extra logarithmic factor compared to the optimal rate $n^{-(\beta-1)/(2\beta+d)}$ derived in \cite{tsybakov1990passive}.

One may consider the logarithmic factor appearing in  \eqref{main_bound_for_minimizer} as a price to pay for the fact that our algorithm is adaptive to the marginal density of $\bx_i$'s and is realized in online mode.   Indeed, \cite{tsybakov1990passive} considered estimators that depend on the marginal density of $\bx_i$'s and achieve the rate $n^{-(\beta-1)/(2\beta+d)}$, while Algorithm \ref{algo} is free of such dependence. On the other hand, we believe that the extra logarithmic factor in the rate can be
avoided for the estimator of $\bx^*$ defined as a minimizer over $\com$ of the local polynomial estimator of function $f$. Such a method does not need the knowledge of the marginal density but it needs the whole sample and cannot be realized in online mode. Moreover, it is computationally intractable since it requires minimization of the estimator, which is a function of general form. It remains an open question whether there exists an algorithm combining all the three advantages, that is, online realization,   adaptivity  to the marginal density and convergence with the rate $n^{-(\beta-1)/(2\beta+d)}$ with no extra logarithmic factor.

In the following theorem, we provide a bound on the optimization risk 
$\Exp\left[f(\bar{\bz}_n) - f^*\right]$,
where $\bar{\bz}_n$ is the average of the outputs of Algorithm \ref{algo} over $n$ iterations. 
\begin{theorem}\label{thm:opt-error}
Let Assumptions \ref{ass:kernel} -- \ref{distass} hold. Then, for $\bar{\bz}_n = \frac{1}{n}\sum_{k=1}^{n}\bz_k$, where $\bz_k$'s are generated by Algorithm \ref{algo}, we have 
\begin{align}\label{6}
    \Exp\left[f(\bar{\bz}_n) - f^*\right] \leq \cst\min \bigg(1, \left(\frac{\log(n)}{n}\right)^{ \frac{2(\beta-1)}{2\beta+d}}\alpha^{-1}\bigg).
\end{align}
\end{theorem}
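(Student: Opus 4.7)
The plan is to first use the convexity of $f$ together with Jensen's inequality to bound $\Exp[f(\bar{\bz}_n) - f^*] \leq \frac{1}{n}\sum_{k=1}^n\Exp[f(\bz_k) - f^*]$, and then derive a per-iteration inequality by combining strong convexity with the projected-gradient-step analysis already used for Theorem \ref{pointest}. Starting from the strong convexity bound
\[
f(\bz_k) - f^* \leq \langle \nabla f(\bz_k), \bz_k - \bx^*\rangle - \frac{\alpha}{2}\norm{\bz_k - \bx^*}^2
\]
and the projection inequality (nonexpansiveness of $\proj_{\com}$, using $\bx^*\in\com$),
\[
\norm{\bz_{k+1}-\bx^*}^2 \leq \norm{\bz_k - \bx^*}^2 - 2\eta_k\langle\bg_{k,\lambda_k}(\bz_k),\bz_k-\bx^*\rangle + \eta_k^2\norm{\bg_{k,\lambda_k}(\bz_k)}^2,
\]
I would split $\bg_{k,\lambda_k}(\bz_k) = \nabla f(\bz_k) + (\bg_{k,\lambda_k}(\bz_k) - \nabla f(\bz_k))$ and take conditional expectation given $\bz_k$. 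Writing $b_k(\bz_k) = \Exp[\bg_{k,\lambda_k}(\bz_k)\mid\bz_k] - \nabla f(\bz_k)$ for the bias of the gradient estimator, this yields
\[
2\eta_k(f(\bz_k)-f^*) \leq \norm{\bz_k-\bx^*}^2 - \Exp[\norm{\bz_{k+1}-\bx^*}^2\mid \bz_k] + \eta_k^2 \Exp[\norm{\bg_{k,\lambda_k}(\bz_k)}^2\mid \bz_k] + 2\eta_k\norm{b_k(\bz_k)}\norm{\bz_k-\bx^*} - \eta_k\alpha\norm{\bz_k-\bx^*}^2.
\]

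With the prescribed $\eta_k = 2/(\alpha k)$, I would then divide by $2\eta_k$, take full expectation, and sum over $k=1,\dots,n$. An Abel-summation check shows that
\[
\sum_{k=1}^n\left[\tfrac{\alpha k}{4}\Exp\norm{\bz_k-\bx^*}^2 - \tfrac{\alpha k}{4}\Exp\norm{\bz_{k+1}-\bx^*}^2 - \tfrac{\alpha}{2}\Exp\norm{\bz_k-\bx^*}^2\right] \leq 0,
\]
so the distance-related contributions cancel entirely. It is exactly this cancellation---the strong convexity term $-\eta_k\alpha\norm{\bz_k-\bx^*}^2$ absorbing the residue of the Abel summation---that is responsible for the gain of one factor of $\alpha$ compared to the quadratic-risk bound in Theorem \ref{pointest}. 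Applying Jensen's inequality then gives
\[
\Exp[f(\bar{\bz}_n)-f^*] \leq \frac{1}{n}\sum_{k=1}^n \frac{\eta_k}{2}\Exp\norm{\bg_{k,\lambda_k}(\bz_k)}^2 + \frac{1}{n}\sum_{k=1}^n \Exp\bigl[\norm{b_k(\bz_k)}\,\norm{\bz_k-\bx^*}\bigr].
\]

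To bound the cross term, I would invoke the uniform bias bound from Lemma \ref{errorgrad} of the form $\norm{b_k(\bz_k)} \leq \cst\, h_k^{\beta-1}$, together with Cauchy--Schwarz and Theorem \ref{pointest}, which gives $\sqrt{\Exp\norm{\bz_k-\bx^*}^2}\leq \cst\,(\log k/k)^{(\beta-1)/(2\beta+d)}\alpha^{-1}$. With $h_k=(\log(k+1)/k)^{1/(2\beta+d)}$ the per-step contribution is then of order $(\log k/k)^{2(\beta-1)/(2\beta+d)}/\alpha$, and its Cesàro average over $k=1,\dots,n$ is of the same order with $k$ replaced by $n$, which is exactly the target rate. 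For the second-moment term, I would use $\norm{\bg_{k,\lambda_k}}^2 \leq 2\norm{\nabla f(\bz_k)}^2 + 2\norm{\bg_{k,\lambda_k}-\nabla f(\bz_k)}^2$, controlling the first piece by smoothness of $f$ and Theorem \ref{pointest}, and the second by the bias/variance bounds of Lemma \ref{errorgrad}; when weighted by $\eta_k/2=1/(\alpha k)$ and averaged over $k$, both pieces give contributions dominated by the bias--distance rate above.

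The main obstacle is to ensure that neither the cross term nor the weighted $\norm{\bg_{k,\lambda_k}}^2$ term reintroduces a spurious $\alpha^{-2}$: the delicate point is to handle the bias--distance product via Cauchy--Schwarz using the square root of the quadratic risk, rather than via Young's inequality splitting $2\eta_k\norm{b_k}\norm{\bz_k-\bx^*}$ against the $-\eta_k\alpha\norm{\bz_k-\bx^*}^2$ term (which would leave an $\alpha^{-2}$ factor in the bias contribution). The $\min(1,\cdot)$ form is then obtained by combining with the trivial bound $f(\bar{\bz}_n)-f^* \leq 2M$ supplied by Assumption \ref{mainassump}(iv) in the small-$\alpha$ regime.
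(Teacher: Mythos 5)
Your route is viable but genuinely different from the paper's, and it has one gap worth fixing. The paper does not keep the bias--distance cross term; it applies Young's inequality $2ab\le\gamma a^2+b^2/\gamma$ with the $\alpha$-adapted choice $\gamma=\alpha/2$, which absorbs only half of the strong-convexity surplus, leaving $-\tfrac{\alpha}{4}\|\bz_k-\bx^*\|^2$ available for the Abel cancellation and a residual $\tfrac{1}{\alpha}\|b_k(\bz_k)\|^2$ controlled by Lemma~\ref{errorgrad}. This keeps Theorem~\ref{thm:opt-error} self-contained. You instead devote the entire strong-convexity surplus to the Abel cancellation and bound the cross term post-summation via Cauchy--Schwarz and Theorem~\ref{pointest}; this also delivers $\alpha^{-1}$, at the cost of invoking Theorem~\ref{pointest}. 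However, your parenthetical claim that the Young split ``would leave an $\alpha^{-2}$ factor in the bias contribution'' is mistaken: that happens only with a fixed $\gamma$; the $\gamma\propto\alpha$ choice is exactly the paper's device and yields $\alpha^{-1}$.

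The genuine gap is the deterministic bound $\|b_k(\bz_k)\|\le\cst\,h_k^{\beta-1}$. Lemma~\ref{bias1} bounds $\sup_{\bx\in\com}\|\Exp[\bg_{k,\lambda_k}(\bx)]-\nabla f(\bx)\|$, an \emph{unconditional} bias at a deterministic $\bx$ evaluated with data independent of $\bx$. By contrast, $b_k(\bz_k)=\Exp[\bg_{k,\lambda_k}(\bz_k)\mid\bz_k]-\nabla f(\bz_k)$ is a conditional bias; since $\bz_k$ and the local polynomial fit $\bg_{k,\lambda_k}(\bz_k)$ both use $(\bx_i,y_i)_{i<k}$, conditioning on $\bz_k$ does not integrate out the kernel weights, so $\Exp[\bg_{k,\lambda_k}(\bz_k)\mid\bz_k]\neq\Exp[\bg_{k,\lambda_k}(\bx)]\big|_{\bx=\bz_k}$ and Lemma~\ref{bias1} does not give an almost-sure bound on $\|b_k(\bz_k)\|$. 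The repair preserves your plan: by Jensen $\|b_k(\bz_k)\|^2\le\Exp[\|\bg_{k,\lambda_k}(\bz_k)-\nabla f(\bz_k)\|^2\mid\bz_k]$, hence $\Exp[\|b_k(\bz_k)\|^2]\le\Exp[\sup_{\bx\in\com}\|\bg_{k,\lambda_k}(\bx)-\nabla f(\bx)\|^2]\le\cst(\log(k+1)/k)^{2(\beta-1)/(2\beta+d)}$ by Lemma~\ref{errorgrad}; then apply Cauchy--Schwarz at the level of full expectations together with Theorem~\ref{pointest} to get $\Exp[\|b_k(\bz_k)\|\,\|\bz_k-\bx^*\|]\le\cst\,\alpha^{-1}(\log k/k)^{2(\beta-1)/(2\beta+d)}$. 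Finally, for the weighted second-moment term you should use the uniform bound \eqref{eq:dop9} of Lemma~\ref{errorgrad} directly rather than passing $\|\nabla f(\bz_k)\|^2$ through Theorem~\ref{pointest}, since the latter route injects additional powers of $\alpha^{-1}$ that would spoil the claimed dependence on $\alpha$.
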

Note that inequality \eqref{lower_opt_error} below gives a minimax lower bound for the optimization risk with the  rate $\alpha^{-1}n^{-2(\beta-1)/(2\beta+d)}$ for all {$\alpha \geq n^{-(\beta+2)/(2\beta+d)}$.} 
This fact and Theorem \ref{thm:opt-error} imply that, in the zone {$\alpha \geq n^{-(\beta+2)/(2\beta+d)}$}, the estimator $\bar{\bz}_n$ achieves the minimax optimal rate (up to a logarithmic factor) with respect to both $n$ and $\alpha$. 
This is the reason to use $\bar{\bz}_n$ instead of the last output $\bz_n$ since for $\bz_n$ we get a guarantee that scales optimally only in $n$ but not in $\alpha$. Indeed, using Theorem~\ref{pointest} and the inequality
$f({\bz}_n) - f^*\le C\norm{{\bz}_n-\bx^*}^2$, where $C>0$ is a constant,
we get the rate $\left(\frac{\log(n)}{n}\right)^{ \frac{2(\beta-1)}{2\beta+d}}\alpha^{-2}$, and not $\left(\frac{\log(n)}{n}\right)^{ \frac{2(\beta-1)}{2\beta+d}}\alpha^{-1}$ for the optimization risk. 

 For very small $\alpha$ such that {$\alpha <  n^{-(\beta+2)/(2\beta+d)}$} the lower bound \eqref{lower_opt_error} scales differently from \eqref{6}  and thus we cannot claim minimax rate optimality for the estimator $\bar{\bz}_n$ for all $n$ and $\alpha$. 
Nevertheless, for any fixed $\alpha>0$ independent of $n$, Theorem \ref{thm:opt-error} and the lower bound \eqref{lower_opt_error} imply that the estimator $\bar{\bz}_n$ attains the minimax optimal rate considered as a function only of $n$.

It is interesting to compare the result of Theorem \ref{thm:opt-error} with the optimal rates for the optimization risk in the case of active design. As discussed in the Introduction, under the active design for the same class of functions $f$ as in Theorem \ref{thm:opt-error}, the dimension $d$ disappears from the optimal rate, which upgrades to $n^{-(\beta-1)/\beta}$, cf. \cite{PT90,akhavan2020}. On the other hand, under active design and the class of $\beta$-H{\"o}lder functions (with no strong convexity) the optimal rate for the optimization risk deteriorates substantially and becomes $(n/\log n)^{-\beta/(2\beta+d)}$, cf. \cite{wang2018optimization}. For all $\beta> 2$, this is worse than the rate under passive design and strong convexity obtained in Theorem \ref{thm:opt-error}. 

\section{Estimating the minimum value of a regression function}

In this section, we apply the above results to estimate the minimum value $f^* = \min_{\bx \in \Theta}f(\bx)$ of function $f$ that belongs to the class $\mathcal{F}_{\beta, \alpha}(L).$ Note that $f(\bar{\bz}_n)$, which is analyzed in Theorem \ref{thm:opt-error} is not an estimator for $f^*$, because it depends on the unknown $f$. 

We propose a method of estimating $f^*$ that uses splitting of the data in two subsamples of equal size. Throughout this section, we assume that $n$ is an even positive integer and we set $m = n/2$. We split the data into two subsamples $D_1=\{(\bx_1,y_1),\dots,(\bx_m,y_m)\}$ and $D_2=\{(\bx_{m+1},y_{m+1})\dots,(\bx_n,y_n)\}$.  Then we apply Algorithm \ref{algo} with $D_1$ as an input to construct $\bar{\bz}_{m} = \frac{1}{m}\sum_{k=1}^{m}\bz_k$, where $\bz_k$ is the update of Algorithm \ref{algo} at round $k\in[m]$. 
Next, based on the subsample $D_2$, we construct a nonparametric estimator $\tilde f_n(\cdot)$ of $f(\cdot)$. At this step, we can use as $\tilde f_n(\cdot)$ any estimator of $f(\cdot)$, which is pointwisely rate optimal. Finally, we take $\tilde f_n(\bar{\bz}_n)$ as an estimator for $f^*$.  

To be specific, we consider as $\tilde f_n(\cdot)$ a regularized local polynomial estimator defined in the same spirit as the estimator of the gradient \eqref{estimatedgradient}. For $\lambda_{m:n}, h_{m:n}>0$, and $\bz\in \mathbb{R}^d$, we define $\hat{\btheta}_{m:n}(\bz) = \left(B_{m:n}(\bz) + \lambda_{m:n} \mathbf{I}\right)^{-1}\bD_{m:n}(\bz)$, where 
\begin{align}
    B_{m:n}(\bz)&=\frac{2}{n h_{m:n}^{d}} \sum_{i=m+1}^{n} \bU\left(\frac{\bx_i - \bz}{h_{m:n}}\right)\bU^{\top}\left(\frac{\bx_i - \bz}{h_{m:n}}\right)K\left(\frac{\bx_i - \bz}{h_{m:n}}\right)\enspace, \\
\bD_{m:n}(\bz)&=\frac{2}{n h_{m:n}^{d}} \sum_{i=m+1}^{n} y_{i} \bU\left(\frac{\bx_{i}-\bz}{h_{m:n}}\right) K\left(\frac{\bx_{i}-\bz}{h_{m:n}}\right)\enspace.
\end{align}
 The {\em regularized local polynomial estimator} of function $f$ at point $\bz$ defined as
\begin{equation}\label{pointestimation}
   \tilde f_n(\bz) = \bU^{\top}(0) \hat{\btheta}_{m:n}(\bz).
\end{equation}
The minimum value estimator that we propose is  outlined in Algorithm \ref{algo2}. 

\begin{algorithm}[t!]
\caption{Estimating the Minimum Value}\label{algo2}
\begin{algorithmic}
\State 
\State {\bfseries Requires} ~ Algorithm \ref{algo}, kernel $K :\mathbb{R}^d\rightarrow \mathbb{R}$, parameters $h_{m:n} =  n^{-\frac{1}{2\beta+d}}$, $\lambda_{m:n} = n^{-\frac{\beta}{2\beta+d}}$.
\vspace{.1cm}
\\
1. Split the data $D$ in two equal parts $D_1$ and $D_2$.
\vspace{.1cm}
\\ 
2. Use Algorithm \ref{algo} to compute $\bar{\bz}_m$ based on the subsample $D_1$.
\vspace{.1cm}
\\
3. Based on the second subsample $D_2,$ compute the estimator $\tilde f_n(\bar{\bz}_m) = \bU^{\top}(0) \hat{\btheta}_{m:n}(\bar{\bz}_m)$.
\State {\bfseries Return} ~ $\tilde f_n(\bar{\bz}_m)$
\end{algorithmic}
\end{algorithm}

It is known from \citep{stone1980} that the local polynomial estimator with no regularization, i.e., with $\lambda_{m:n} =0$, and with $h_{m:n}$ as in  Algorithm \ref{algo2} attains the minimax optimal rate of estimating the value of $f$ at a fixed point in asymptotics as $n\to\infty$. 
But for finite $n$ this estimator is not necessarily well-defined. In the next theorem, we show that its properly regularized version attains the minimax pointwise rate for all sample sizes.
\begin{restatable}{theorem}{funcest}\label{funcest}
Let Assumptions \ref{ass:kernel} -- \ref{distass} hold, and let $\tilde f_{n}$ be the estimator \eqref{pointestimation} with parameters $h_{m:n}$ and $\lambda_{m:n}$ as in Algorithm \ref{algo2}. Then 
\begin{align*}
 \sup_{\bx \in \com}   \Exp\left[\left( \tilde f_{n}(\bx) - f(\bx)\right)^{2}\right]\leq \cst n^{-\frac{2\beta}{2\beta+d}}\enspace.
\end{align*}
\end{restatable}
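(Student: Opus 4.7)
The plan is to perform a standard bias-variance decomposition for the regularized local polynomial estimator, combined with a high-probability lower bound on the smallest eigenvalue of the design matrix $B_{m:n}(\bx)$. Since $\bU(0) = (1,0,\dots,0)^{\top}$ by the ordering of multi-indices, we have $\bU^{\top}(0)\btheta(\bx) = f(\bx)$, so the error can be written as $\tilde f_n(\bx)-f(\bx) = \bU^{\top}(0)\bigl(\hat{\btheta}_{m:n}(\bx)-\btheta(\bx)\bigr)$. Using $y_i = f(\bx_i)+\xi_i$ and expanding $f(\bx_i)$ by Taylor's formula at $\bx$ with remainder $r_i$ satisfying $|r_i|\le L\|\bx_i-\bx\|^{\beta}$, one gets the identity
\begin{equation*}
\hat{\btheta}_{m:n}(\bx)-\btheta(\bx) = -\lambda_{m:n}(B_{m:n}(\bx)+\lambda_{m:n}\mathbf{I})^{-1}\btheta(\bx) + (B_{m:n}(\bx)+\lambda_{m:n}\mathbf{I})^{-1}(\bR+\bXi),
\end{equation*}
where $\bR$ collects the Taylor remainders and $\bXi$ collects the noise terms, both weighted by $\bU(\cdot)K(\cdot)/h_{m:n}^{d}$.

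The central ingredient is a high-probability lower bound $\lambda_{\min}(B_{m:n}(\bx))\ge c_0 p_{\min}$ on an event $\mathcal{A}$ with $\mathbb{P}(\mathcal{A}^{c})\le \exp(-\cst_{1} n h_{m:n}^{d})$. This follows by matrix Bernstein (or Hoeffding) applied to the i.i.d. bounded matrices $\bU(\cdot)\bU^{\top}(\cdot)K(\cdot)/h_{m:n}^{d}$ together with the fact that the expected matrix $\Exp[B_{m:n}(\bx)]$ has smallest eigenvalue bounded below by a constant multiple of $p_{\min}$; this latter point uses Assumption \ref{distass} and the compact-support/positivity of $K$ (Assumption \ref{ass:kernel}), and is presumably the content of the Lemma \ref{errorgrad} referenced earlier in the paper. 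I would invoke that lemma if possible; otherwise, proving it separately is the main technical obstacle and requires a covering/concentration argument uniform over $\bx$ only within a ball of radius $h_{m:n}$, which is not needed here since $\bx$ is fixed.

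On $\mathcal{A}$, we have $\|(B_{m:n}+\lambda_{m:n}\mathbf{I})^{-1}\|_{\op}\le 1/(c_0p_{\min})$, so each of the three contributions is bounded in the classical way. The regularization bias contributes at most $\cst \lambda_{m:n}^{2}\|\btheta(\bx)\|^{2}\le \cst n^{-2\beta/(2\beta+d)}$, using that $\|\btheta(\bx)\|$ is uniformly bounded by the H\"older assumption. The Taylor-bias term is bounded by $\cst\|\bR\|^{2}$, and since $|r_i|\le L h_{m:n}^{\beta}$ on the support of $K((\bx_i-\bx)/h_{m:n})$, a direct calculation gives $\Exp\|\bR\|^{2}\le \cst h_{m:n}^{2\beta}=\cst n^{-2\beta/(2\beta+d)}$. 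The stochastic term, taking the conditional expectation of $\bXi\bXi^{\top}$ given the design and using sub-exponentiality of $\xi_i$ (Assumption \ref{assnoise}), yields variance of order $(nh_{m:n}^{d})^{-1}=n^{-2\beta/(2\beta+d)}$, matching the bias.

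Off $\mathcal{A}$, we use the deterministic bound $\|(B_{m:n}+\lambda_{m:n}\mathbf{I})^{-1}\|_{\op}\le 1/\lambda_{m:n}$ and the trivial bound $|\tilde f_n(\bx)|\le \cst(\|\bD_{m:n}\|/\lambda_{m:n})$, whose second moment is polynomial in $n$ thanks to the sub-exponential noise and bounded $f$ on $\com'$ (Assumption \ref{mainassump}(iv)). Multiplying by $\mathbb{P}(\mathcal{A}^{c})\le \exp(-\cst_{1} n h_{m:n}^{d})$, which decays faster than any polynomial because $n h_{m:n}^{d}=n^{2\beta/(2\beta+d)}\to\infty$, makes this contribution negligible. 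Summing all three bias/variance contributions on $\mathcal{A}$ with the negligible $\mathcal{A}^{c}$ term yields the claimed $\cst n^{-2\beta/(2\beta+d)}$ rate, uniformly in $\bx\in\com$ since every constant depends only on the globally specified parameters. The main obstacle is establishing the uniform-in-$\bx$ spectral lower bound on $B_{m:n}(\bx)$ with the required exponentially small failure probability; everything else is routine manipulation of the decomposition.
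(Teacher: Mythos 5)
Your proposal is correct and reaches the same bound, but by a route organized differently from the paper's. The paper splits the risk into bias (Lemma \ref{bias2}) and variance (Lemma \ref{var}), centers both analyses around $(\Exp[B_{m:n,\lambda}(\bx)])^{-1}$ rather than the random inverse, and works with $L^4$-moment bounds on $\|B_{m:n,\lambda}(\bx)^{-1}\|_{\op}$ and on $\|B_{m:n,\lambda}(\bx)^{-1}-(\Exp[B_{m:n,\lambda}(\bx)])^{-1}\|_{\op}$ obtained by integrating a Bernstein-type tail (Lemma \ref{boundB}), applying Cauchy--Schwarz throughout with no explicit good/bad-event split. You instead decompose $\hat{\btheta}_{m:n}-\btheta$ into a regularization term, a Taylor-remainder term, and a noise term, and condition on a high-probability event $\mathcal{A}$ where $\lambda_{\min}(B_{m:n}(\bx))$ is bounded away from zero, treating $\mathcal{A}^{c}$ via the deterministic bound $\lambda_{m:n}^{-1}$ on the resolvent and the exponentially small $\Prob(\mathcal{A}^{c})$. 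Both arguments ultimately rest on the same uniform spectral lower bound $\inf_{\bx\in\com}\lambda_{\min}(\Exp[B_{m:n}(\bx)])\geq p_{\min}\lambda_{\min}(H)>0$, which in the paper is Lemma \ref{elbound}(ii), not Lemma \ref{errorgrad} as you guessed. Your event-based version is arguably cleaner for a pointwise risk, while the paper's moment-bound machinery is deliberately built for reuse: the same Lemmas \ref{boundB} and \ref{boundCX} (and their supremum variants, Lemmas \ref{boundBsup}, \ref{boundC}) power the considerably harder uniform-in-$\bx$ estimates needed for Theorem \ref{pointest}, which explains the authors' choice. One detail worth making explicit in your route: off $\mathcal{A}$, separating the noise contribution from $\Prob(\mathcal{A}^{c})$ requires a Cauchy--Schwarz step and hence at least a fourth-moment bound on $\|\bD_{m:n}(\bx)\|$; this does follow from the sub-exponential noise and boundedness of $f$ on $\com'$, but it should be stated.
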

Inspection of the proofs  shows that the strong convexity assumption (Assumption \ref{mainassump} (iii)) is not needed in Theorem \ref{funcest}. We do not state it in the theorem since it has no incidence on the other results of the paper.

   Using Theorems \ref{thm:opt-error} and \ref{funcest} we obtain the following bounds on the convergence rate of the estimator $\hat f_n = \tilde f_n(\bar{\bz}_m)$ of the minimum value $f^*=f(\bx^*)$.
\begin{theorem}\label{minimum_value_estimation}
Let Assumptions \ref{ass:kernel} -- \ref{distass} hold and let $\hat f_n = \tilde f_n(\bar{\bz}_m)$ be the output of Algorithm \ref{algo2}. Then
\begin{equation}\label{fstarestimator}
    \Exp\left|\hat f_n - f(\bx^*)\right| \leq \cst\max\left(1,\alpha^{-1}\right)\cdot\begin{dcases}
     (\log(n)/n)^{\frac{2}{4+d}} &\text{if } \beta =2\enspace,\\
     n^{-\frac{\beta}{2\beta+d}} &\text{if } \beta > 2\enspace,
     \end{dcases}
\end{equation}
Furthermore, for $\alpha\geq n^{-\frac{\beta-2}{2\beta+d}}\log(n)^{\frac{2(\beta-1)}{2\beta+d}}$ we have 
\begin{align}\label{fstarestimator1}
    \Exp\left|\hat f_n - f(\bx^*)\right| \leq \cst n^{-\frac{\beta}{2\beta+d}}.
\end{align}
\end{theorem}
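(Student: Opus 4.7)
The plan is to decompose the error via the triangle inequality,
\[
\bigl|\hat f_n - f(\bx^*)\bigr| \;\le\; \bigl|\tilde f_n(\bar{\bz}_m) - f(\bar{\bz}_m)\bigr| \;+\; \bigl(f(\bar{\bz}_m) - f^*\bigr),
\]
where the absolute value is dropped on the second term since $\bar{\bz}_m \in \com$ forces $f(\bar{\bz}_m) \ge f^*$. These two pieces are exactly what the previous two theorems control: the optimization risk of Algorithm \ref{algo} and the pointwise risk of the local polynomial estimator.

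First I would bound the optimization term by applying Theorem \ref{thm:opt-error} to the subsample $D_1$ of size $m = n/2$, which yields
\[
\Exp\bigl[f(\bar{\bz}_m) - f^*\bigr] \;\le\; \cst\,\alpha^{-1} \bigl(\log(n)/n\bigr)^{2(\beta-1)/(2\beta+d)}.
\]
For the nonparametric term, the central observation is sample-splitting independence: $\bar{\bz}_m$ is a function of $D_1$ alone while $\tilde f_n$ depends only on $D_2$, and the two subsamples are independent. Conditioning on $D_1$ freezes $\bar{\bz}_m \in \com$, so $\tilde f_n(\bar{\bz}_m)$ behaves as an estimator at a deterministic point. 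Jensen's inequality together with Theorem \ref{funcest} then gives
\[
\Exp\bigl[\bigl|\tilde f_n(\bar{\bz}_m) - f(\bar{\bz}_m)\bigr|\,\big|\,D_1\bigr] \;\le\; \Bigl(\sup_{\bx \in \com}\Exp\bigl[(\tilde f_n(\bx) - f(\bx))^2\bigr]\Bigr)^{1/2} \;\le\; \cst\, n^{-\beta/(2\beta+d)},
\]
and taking outer expectation preserves the bound.

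Adding the two contributions,
\[
\Exp\bigl|\hat f_n - f(\bx^*)\bigr| \;\le\; \cst\, n^{-\beta/(2\beta+d)} \;+\; \cst\,\alpha^{-1}\bigl(\log(n)/n\bigr)^{2(\beta-1)/(2\beta+d)}.
\]
Both claims then reduce to arithmetic. For \eqref{fstarestimator}: when $\beta > 2$ one has $2(\beta-1) > \beta$, so for $n$ large enough $(\log(n)/n)^{2(\beta-1)/(2\beta+d)} \le \cst\, n^{-\beta/(2\beta+d)}$ and the combined bound collapses to $\cst\,\max(1,\alpha^{-1})\, n^{-\beta/(2\beta+d)}$; when $\beta = 2$ both exponents coincide at $2/(4+d)$, the optimization term carries the extra logarithmic factor and dominates, producing $\cst\,\max(1,\alpha^{-1})(\log(n)/n)^{2/(4+d)}$. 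For \eqref{fstarestimator1}, a direct computation shows that the assumption $\alpha \ge n^{-(\beta-2)/(2\beta+d)}(\log n)^{2(\beta-1)/(2\beta+d)}$ is exactly equivalent to $\alpha^{-1}(\log(n)/n)^{2(\beta-1)/(2\beta+d)} \le n^{-\beta/(2\beta+d)}$, which eliminates both the logarithmic factor and the $\alpha^{-1}$ inflation.

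The proof is essentially an assembly of the two previously proved theorems. The only conceptually delicate point is exploiting the independence of $D_1$ and $D_2$ to apply the pointwise bound of Theorem \ref{funcest}, which is stated for deterministic evaluation points, at the random point $\bar{\bz}_m$; the rest is bookkeeping of rates in the regimes $\beta = 2$ and $\beta > 2$.
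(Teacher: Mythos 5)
Your proof is correct and follows essentially the same route as the paper: the same triangle-inequality decomposition, the same use of sample-splitting independence, conditioning plus Jensen (the paper phrases it as Cauchy--Schwarz) to invoke Theorem~\ref{funcest} at the random point $\bar{\bz}_m$, Theorem~\ref{thm:opt-error} for the optimization term, and the same rate arithmetic in the two regimes. The only difference is presentational: you spell out the final case analysis that the paper leaves implicit.
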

\begin{proof} Using conditioning on $\bar{\bz}_m$ and the Cauchy-Schwarz inequality we get 
\begin{align*}
    \Exp\left|\hat f_n - f(\bx^*)\right| &\leq \Exp|\tilde f_n(\bar{\bz}_m) - f(\bar{\bz}_m)| + \Exp|f(\bar{\bz}_m) - f(\bx^{*})|
    \\
    &= \Exp\left[\Exp\big[|\tilde f_n(\bar{\bz}_m) - f(\bar{\bz}_m)| \big|\bar{\bz}_m\big]\right] + \Exp|f(\bar{\bz}_m) - f(\bx^{*})|
    \\&
    \leq \Exp\left[\left(\Exp\left[\left(\tilde f_n(\bar{\bz}_m) - f(\bar{\bz}_m)\right)^{2}|\bar{\bz}_m\right]\right)^{\frac{1}{2}}\right] + \Exp|f(\bar{\bz}_m) - f(\bx^{*})| .
\end{align*}
Recalling that $\tilde f_n(\cdot)$ and $\bar{\bz}_m$ are independent and applying Theorems \ref{thm:opt-error} and \ref{funcest} we deduce that 
\begin{align}\label{eq:th:fstarestimator}
    \Exp\left|\hat f_n - f(\bx^*)\right| &\leq \cst\bigg(n^{-\frac{\beta}{2\beta+d}} + {\alpha^{-1}}\left(\frac{\log(n)}{n}\right)^{\frac{2(\beta-1)}{2\beta+d}}\bigg),
\end{align}
which implies the bound \eqref{fstarestimator1} for $\alpha\geq n^{-\frac{\beta-2}{2\beta+d}}\log(n)^{\frac{2(\beta-1)}{2\beta+d}}$, and the bound \eqref{fstarestimator}. 
\end{proof}
Theorem \ref{minimum_value_estimation} shows that estimation of $f^*=f(\bx^*)$ for smooth and strongly convex functions under passive design is realized with the same rate as function estimation at a fixed point. {If the condition $\alpha \geq n^{-\frac{\beta-2}{2\beta+d}}\log(n)^{\frac{2(\beta-1)}{2\beta+d}}$ is satisfied Theorem \ref{minimum_value_estimation} and the lower bound \eqref{lower_minimum_value} imply that the estimator~$\hat f_n$ attains the minimax optimal rate as function of both parameters $n$ and $\alpha$ on the class $\mathcal{F}_{\beta,\alpha}(L)$. For $\beta=2$ the above condition requires $\alpha$ to be big, namely, $\alpha \geq \log(n)^{\frac{2}{2\beta+d}}$. Nevertheless, for $\beta=2$ and $\alpha\ge 1$ the bounds \eqref{fstarestimator} and \eqref{lower_minimum_value} imply that $\hat f_n$ is minimax optimal up to a logarithmic factor. Regarding the regime of small $\alpha$, where $\alpha < n^{-\frac{\beta-2}{2\beta+d}}\log(n)^{\frac{2(\beta-1)}{2\beta+d}}$, we cannot claim that the optimality is achieved by $\hat f_n$ with respect to both $n$ and  $\alpha$. Indeed, in this regime, the lower bound \eqref{lower_minimum_value} does not depend on $\alpha$ while the upper bound \eqref{eq:th:fstarestimator} does. On the other hand, considering $\alpha>0$ as fixed, the dependence of the convergence rate of $\hat f_n$ on the sample size $n$ cannot be improved in a minimax sense in all regimes (up to a logarithmic factor if $\beta=2$) and it corresponds to the optimal rate of estimating a smooth function at a {fixed} point.}

In the next section we show that the rate $(n/ \log n)^{-\beta/(2\beta+d)}$ is minimax optimal for estimating the minimum value $f^*$ on the class of $\beta$-smooth regression functions {\it without} the strong convexity assumption. It coincides with the rate of function estimation in the supremum norm. Thus, the strong convexity allows us to reduce the global function reconstruction problem to a simpler, fixed point estimation leading to the rates without an extra logarithmic factor. Note also that $f^*$ cannot be estimated better than with the rate $n^{-\beta/(2\beta+d)}$  even in the oracle setting where the unknown minimizer $\bx^*$ is revealed. Indeed, in this case we still need to estimate the value of function $f$ at point $\bx^*$.

Notice that, for $\beta>2$, the convergence rate of Algorithm \ref{algo} used at the first stage to estimate the minimizer $\bx^*$ is more than needed to obtain \eqref{fstarestimator}. To achieve the minimax optimal rate for $f^*$ it suffices to estimate $\bx^*$ at a slower rate, namely, $n^{-\beta/(2\beta+d)}$ for the optimization risk. Therefore, it is not necessary to take the estimator  $\bar \bz_m$ at the first stage. It can be replaced by some suboptimal estimators. This could be beneficial since suboptimal algorithms may be computationally less costly. 

Finally, observe that  much faster rate can be obtained in the active design setting, see Table~\ref{table:comparisons}.  Specifically, $f^*$ can be estimated with the parametric rate $C n^{-1/2}$ where $C>0$ is a constant independent of the dimension $d$ and smoothness $\beta$ for any $\beta>2$ and all $n$ large enough \citep{akhavan2020}. Clearly, the rate $n^{-1/2}$ cannot be improved even  in the oracle setting where the unknown
minimizer $\bx^*$ is revealed and one  makes all queries at point $\bx^*.$

\section{Lower bounds}

The following theorem provides lower bounds for the minimax risks of arbitrary estimators
on the class $\mathcal{F}_{\beta, \alpha}(L)$. Let $w(\cdot)$ 
be a monotone non-decreasing function on $[0,\infty)$ such that $w(0)=0$ and $w\not \equiv 0$.
\begin{theorem}\label{lower_bound}
Let  $\bx_1,\dots,\bx_n$ be i.i.d. random vectors with a bounded Lebesgue density on ${\mathbb R}^d$.
Assume that the random variables $\xi_{i}$ are i.i.d. having a density $p_{\xi}(\cdot)$ with respect to the Lebesgue measure on $\mathbb{R}$ such that
\begin{equation}\label{noice_assumption}
    \exists I_{*}>0, v_{0}>0: \quad \int \left(\sqrt{p_{\xi}(u)} - \sqrt{p_{\xi}(u+v)} \right)^2  \drm u \leq I_{*} v^{2}\enspace,
\end{equation}
for $|v| \leq v_{0}$, and $(\xi_{1},\dots,\xi_{n})$ is independent of $(\bx_1,\dots,\bx_n)$. Then, for any $\alpha,L>0$, $\beta\ge 2$, we have
\begin{equation}\label{lower_minimizer}
    \inf_{\hat\bx_{n}} \sup _{f \in \mathcal{F}_{\beta, \alpha}(L)} \mathbf{E}_{f}w\left({\max\left(
\alpha n^{\frac{\beta-1}{2 \beta+d}}, n^{\frac{1}{2\beta+d}}\right)}\left\|\hat\bx_{n}-\bx^*\right\|\right) \geq c_{1},
\end{equation}
and
\begin{equation}\label{lower_minimum_value}
    \inf_{T_{n}} \sup _{f \in \mathcal{F}_{\beta, \alpha}(L)} \mathbf{E}_{f}w(n^{\frac{\beta}{2 \beta+d}}\left|T_{n}-f^*\right|) \geq c_{1}',
\end{equation}
where $\inf_{\hat\bx_{n}}$ and $\inf_{T_{n}}$ denote the infimum over all estimators of the minimizer and over all estimators of the minimum value of $f$, respectively, and $c_1>0, c_1'>0$ are constants that depend only on {$\beta, L, d, \com, I_{*}, v_{0}$, and $w(\cdot)$}. Furthemore,
\begin{equation}\label{lower_opt_error}
    \inf_{\hat\bx_{n}} \sup _{f \in \mathcal{F}_{\beta, \alpha}(L)} \mathbf{E}_{f}[ f(\hat\bx_{n})-f^*] \geq c_{2}
    \min\Big(
\alpha^{-1} n^{-\frac{2(\beta-1)}{2 \beta+d}}, \alpha n^{-\frac{2}{2\beta+d}}\Big),
\end{equation}
where $c_2>0$ is a constant that depends only on $\beta, L, d, \com, I_{*}, v_{0}$.
\end{theorem}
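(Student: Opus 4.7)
The plan is to apply Le Cam's two-point testing method to each of the three bounds. The common reduction is: given $f_0, f_1 \in \mathcal{F}_{\beta,\alpha}(L)$ with the relevant functional values at distance at least $2\delta$ and $\mathrm{TV}\bigl(P_{f_0}^{\otimes n}, P_{f_1}^{\otimes n}\bigr) \leq 1/2$, the corresponding minimax risk is at least $\tfrac{1}{4} w(\delta)$. The noise assumption \eqref{noice_assumption}, combined with the tensorization of squared Hellinger distance, yields
\[
H^2\bigl(P_{f_0}^{\otimes n}, P_{f_1}^{\otimes n}\bigr) \;\leq\; n\, I_*\, p_{\max}\, \|f_0-f_1\|_{L^2(\com')}^2,
\]
valid as long as $\|f_0-f_1\|_\infty \leq v_0$, so requiring $\|f_0-f_1\|_{L^2}^2 \leq c/n$ bounds the TV distance and reduces each inequality to a construction task.

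Fix a $C^\infty$ nonnegative template $\psi$ supported in the unit Euclidean ball with $\psi(0)=1$, fix $\bx_c \in \mathrm{int}(\com)$, and set $h = c_0 n^{-1/(2\beta+d)}$. For \eqref{lower_minimizer} I would use two different constructions, each realizing one term of the maximum. \emph{Construction A (larger $\alpha$):} take $f_0(\bu) = \tfrac{\tilde\alpha}{2}\|\bu-\bx_c\|^2$ and $f_1(\bu) = f_0(\bu) + c_1 h^\beta \phi\bigl((\bu-\bx_c)/h\bigr)$ for a fixed smooth template $\phi$ with $\nabla\phi(0)\neq 0$, where $\tilde\alpha = \alpha + c_1 \|\phi\|_{C^2} h^{\beta-2}$ is a mild inflation ensuring $f_1$ remains $\alpha$-strongly convex. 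Perturbative solution of $\nabla f_1 = 0$ gives $\|\bx^*_1 - \bx^*_0\| \asymp h^{\beta-1}/\alpha$ in the regime $\alpha \gtrsim h^{\beta-2}$. \emph{Construction B (smaller $\alpha$):} take $f_j(\bu) = \tfrac{\alpha}{2}\|\bu-\bx_c\|^2 + h^\beta g\bigl((\bu - \bx_c - \delta_j \mathbf{v})/h\bigr)$ for $j=0,1$, with $\delta_0 = 0$, $\delta_1 = h$, $\mathbf{v}$ a fixed unit vector, and $g$ a smooth strongly convex template with unique minimum at the origin. In the regime $\alpha \lesssim h^{\beta-2}$ the local curvature $\asymp h^{\beta-2}$ of the scaled $g$ dominates the background quadratic and pins $\bx^*_j$ near $\bx_c + \delta_j\mathbf{v}$, so $\|\bx^*_0 - \bx^*_1\| \asymp h$. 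In both cases $\|f_0-f_1\|_{L^2}^2 \asymp h^{2\beta+d} \asymp 1/n$, and the two regimes together cover all $\alpha$, yielding \eqref{lower_minimizer}.

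For the minimum-value bound \eqref{lower_minimum_value} I would use a well: $f_0(\bu) = \tfrac{\tilde\alpha}{2}\|\bu-\bx_c\|^2$ and $f_1(\bu) = f_0(\bu) - c_2 h^\beta \psi\bigl((\bu-\bx_c)/h\bigr)$, with $\tilde\alpha = \alpha + c_2 \|\psi\|_{C^2} h^{\beta-2}$ offsetting the concavity of the well to preserve $\alpha$-strong convexity. Since $\psi$ attains its maximum at the origin, $\bx^*_0 = \bx^*_1 = \bx_c$ and $f^*_0 - f^*_1 = c_2 h^\beta \psi(0) \asymp n^{-\beta/(2\beta+d)}$, while $\|f_0-f_1\|_{L^2}^2 \asymp h^{2\beta+d}$ as before. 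For the optimization-risk bound \eqref{lower_opt_error}, applying the $\alpha$-strong convexity inequality $f(\hat\bx_n) - f^* \geq \tfrac{\alpha}{2}\|\hat\bx_n-\bx^*\|^2$ to the squared minimizer separations of Constructions A and B gives lower bounds of order $\alpha^{-1} n^{-2(\beta-1)/(2\beta+d)}$ and $\alpha\, n^{-2/(2\beta+d)}$ respectively, whose minimum is the right-hand side of \eqref{lower_opt_error}.

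The main technical obstacle I expect is verifying that each constructed pair lies in $\mathcal{F}_{\beta,\alpha}(L)$ uniformly over the relevant parameter regime. The $\beta$-Hölder seminorm of the scaled perturbation $h^\beta \phi(\cdot/h)$ is controlled by $\|\phi\|_\beta$ independently of $h$ (by invariance of the Hölder seminorm under this rescaling), but the mildly inflated $\tilde\alpha$ in the quadratic part must stay within the Hölder budget $L$ (crucial for $\beta = 2$, harmless for $\beta > 2$), and the $\alpha$-strong convexity of $f_1$ must be verified by bounding the operator norm of the perturbation's Hessian. A secondary obstacle is the perturbative or pinning analysis of $\nabla f_1 = 0$ in Constructions A and B; the transition between these two constructions at the threshold $\alpha \asymp h^{\beta-2} = n^{-(\beta-2)/(2\beta+d)}$ is precisely what produces the two-term maximum in \eqref{lower_minimizer}.
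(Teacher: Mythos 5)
Your overall strategy (two--point Le Cam reduction, Hellinger control via the noise assumption, quadratic background plus small bump perturbation) matches the paper's, and Construction A and the ``well'' construction for \eqref{lower_minimum_value} are sound. But Construction B, as stated, does not work, and this is where the two proofs genuinely diverge.

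The trouble is that the requirements on $g$ are mutually incompatible. For the Hellinger bound $H^2(P_{f_0}^{\otimes n},P_{f_1}^{\otimes n})\lesssim n\,\|f_0-f_1\|_{L^2}^2\lesssim 1$ you need $\|f_0-f_1\|_{L^2}^2\asymp h^{2\beta+d}$, which forces $f_0-f_1$ (hence $g$) to be essentially supported in a ball of radius $O(h)$. But a smooth function with compact support and a strict interior minimum cannot be globally convex -- after the minimum the function must rise and then flatten back to zero, so it must pass through a region of strictly negative curvature of order $\asymp 1$. That region contributes $\asymp -h^{\beta-2}$ to $\nabla^2 f_j$, and in the regime $\alpha\lesssim h^{\beta-2}$ the background $\alpha I$ cannot compensate, so $f_j\notin\mathcal{F}_{\beta,\alpha}(L)$. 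If you instead take $g$ genuinely globally strongly convex (no compact support), then $g((\bu-\bx_c)/h)-g((\bu-\bx_c-h\mathbf{v})/h)$ is of size $O(1)$ on all of $\com$ (for quadratic $g$ it is exactly linear), so $\|f_0-f_1\|_{L^2}^2\asymp h^{2(\beta-1)}\gg n^{-1}$ and the Hellinger distance blows up. Either way, Construction B as written does not yield a pair in $\mathcal{F}_{\beta,\alpha}(L)$ that is statistically indistinguishable.

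The fix is exactly what the paper does instead: rather than building a new pair for small $\alpha$, observe that $\mathcal{F}_{\alpha_0,\beta}(L)\subset\mathcal{F}_{\alpha,\beta}(L)$ for $\alpha<\alpha_0:=n^{-(\beta-2)/(2\beta+d)}$, and reuse the pair built for $\alpha_0$. Evaluated at $\alpha_0$, the separation $\alpha_0^{-1}h_n^{\beta-1}$ from Construction A becomes $h_n\asymp n^{-1/(2\beta+d)}$, which is precisely the second branch of the maximum in \eqref{lower_minimizer}. (Equivalently, one can rescue your Construction B by inflating the background quadratic to $\tilde\alpha\asymp h^{\beta-2}$ so the concave transition region is absorbed; but that is the nesting argument in disguise.) Two further, smaller remarks: the paper uses a \emph{single} two--hypothesis pair, built from a specially designed cumulative template $\Phi=\prod_i\Psi(x_i)$, that simultaneously shifts the minimizer \emph{and} the minimum value, thereby yielding \eqref{lower_minimizer} and \eqref{lower_minimum_value} in one construction -- your choice to use a separate ``well'' for $f^*$ is valid but less economical; and for \eqref{lower_opt_error} the paper derives the lower bound directly from \eqref{lower_minimizer} with $w(u)=u^2$ together with $f(\hat\bx_n)-f^*\ge(\alpha/2)\|\hat\bx_n-\bx^*\|^2$, which is cleaner than re-deriving separations from Constructions A and B.
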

Condition \eqref{noice_assumption} is rather general. It is satisfied, for example, for the Gaussian distribution and also for a large class of regular densities, cf. \cite{IbrHasm-book}. 
The instance of the lower bound \eqref{lower_minimizer} with $\alpha\asymp 1$ was proved in 
\cite{tsybakov1990passive} under a more restrictive condition on the density $p_\xi$. In \eqref{lower_minimizer}, we extend this result by deriving the dependence of the lower bound on the parameter~$\alpha$, which reveals two non-asymptotic regimes. This is also reflected in \eqref{lower_opt_error}, which is a direct consequence of \eqref{lower_minimizer}.

The proof of Theorem \ref{lower_bound} is given in Section \ref{proof_lower_bound}.
It is based on a reduction  to the problem of testing two hypotheses for \eqref{lower_minimizer} and two fuzzy hypotheses  for \eqref{lower_minimum_value} (cf. \cite{Tsybakov09}). 

Combining the bound \eqref{lower_minimizer}  with   $w(u)=u^2$  and Theorem \ref{pointest} we conclude that  the estimator $\bz_n$ achieves the minimax optimal rate up to a logarithmic factor for estimating $\bx^*$ under the squared risk on the class of functions $\mathcal{F}_{\beta, \alpha}(L)$ if $\alpha\ge n^{-(\beta-2)/(2\beta+d)}$. Similar conclusion holds true for the optimization risk, see the comment after Theorem \ref{thm:opt-error}, and for the estimation of the minimum value $f^*$, see the comment after Theorem \ref{minimum_value_estimation}.  

It is interesting to compare our results about estimation of $f^*$ with the setting where $f$ is not strongly convex. To this end, we provide a minimax lower bound on estimation of $f^*$ over the class of $\beta$-H{\"o}lder functions $\mathcal{F}_{\beta}(L)$. 

\begin{theorem}\label{th:lower_bound_nonconvex}
Let  $\bx_1,\dots,\bx_n$ be i.i.d. random vectors with a bounded Lebesgue density on ${\mathbb R}^d$. Let $\xi_{i}$'s be i.i.d. Gaussian random variables with zero mean and variance $\sigma^2$ and let $(\xi_{1},\dots,\xi_{n})$ be independent of $(\bx_1,\dots,\bx_n)$. Assume that $\com$ contains an open subset of ${\mathbb R}^d$. Then, for any $\beta > 0, L>0,$ we have
\begin{equation}\label{lower_minimum_value_nonconvex}
    \inf_{T_{n}} \sup _{f \in \mathcal{F}_{\beta}(L)} \mathbf{E}_{f}w\Big(\left(\frac{n}{\log n}\right)^{\frac{\beta}{2 \beta+d}}\left|T_{n}-f^*\right|\Big) \geq c_{3},
\end{equation}
where $\inf_{T_{n}}$ denotes the infimum over all estimators of the minimum value of $f$ and $c_3>0$ is a constant that depends only on $\beta, \alpha, d, L, \com, \sigma^2$, and $w(\cdot)$.
\end{theorem}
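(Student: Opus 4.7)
The rate $(n/\log n)^{-\beta/(2\beta+d)}$ is the classical nonparametric minimax rate for $L_\infty$-type losses, so I would obtain the lower bound by a multi-bump Le Cam reduction in the spirit of \cite{Tsybakov09}, Sec.~2.6: test the null $f\equiv 0$ against a uniform mixture of $M\asymp h^{-d}$ bump alternatives that all share the same minimum value $-\delta$, and calibrate $h,\delta$ so that the two composite laws are statistically indistinguishable while the minimum values are separated by $\delta$.

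Fix $h,\delta>0$ to be tuned later. Since $\com$ contains an open subset, there is a closed ball $B\subset\com^{\circ}$ in which we can fit $M\ge c\,h^{-d}$ grid points $\bx_1,\ldots,\bx_M$ pairwise separated by $2h$. Let $\varphi:\mathbb R^d\to[0,1]$ be a smooth bump with $\varphi(\mathbf 0)=1$, $\Supp(\varphi)\subset\{\|\bu\|\le 1\}$ and finite $\beta$-H\"older seminorm. Define $f_0\equiv 0$ and $f_k(\bx)=-\delta\,\varphi((\bx-\bx_k)/h)$ for $k\in[M]$, so that the supports of $f_1,\ldots,f_M$ are pairwise disjoint and contained in $\com$. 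Standard scaling of H\"older seminorms gives $f_k\in\mathcal F_\beta(L)$ whenever $\delta\le c_0 L h^{\beta}$, and $f_0^*=0$ while $f_k^*=-\delta$ for every $k\ge 1$. Denote by $P_k$ the joint law of $(\bx_i,y_i)_{i=1}^n$ under $f_k$ and by $\bar P:=M^{-1}\sum_{k=1}^M P_k$ the uniform mixture. The Gaussian MGF identity yields
\[
\Exp_{P_0}\!\left[\tfrac{dP_j\,dP_k}{(dP_0)^2}\right]=\Big(\Exp_{\bx}\exp\bigl(f_j(\bx)f_k(\bx)/\sigma^2\bigr)\Big)^n,
\]
which equals $1$ for $j\ne k$ by disjointness of supports and, for $j=k$, is bounded by $\exp(c_1 nh^d\delta^2/\sigma^2)$ once $\delta\le\sigma$ (using boundedness of $\varphi$, the bounded design density, and $e^u-1\le 2u$ for $u\le 1$). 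Summing yields $\chi^2(\bar P\,\|\,P_0)\le M^{-1}\bigl(\exp(c_1 nh^d\delta^2/\sigma^2)-1\bigr)$.

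Choose $h=c_2(\log n/n)^{1/(2\beta+d)}$, so $\log M\ge c_3\log n$, and $\delta=c_4(\log n/n)^{\beta/(2\beta+d)}$ with $c_4$ small enough to enforce simultaneously (a) $\delta\le c_0 Lh^{\beta}$ and (b) $c_1 nh^d\delta^2/\sigma^2\le\tfrac12\log M$; the two constraints saturate at the same order of $h$ and $\delta$, which is precisely the source of the extra logarithmic factor in the rate. Then $\chi^2(\bar P\,\|\,P_0)\le 1/2$, hence $\mathrm{TV}(\bar P,P_0)\le 1/2$, and Le Cam's two-point inequality applied to the test $\psi=\mathbf 1\{T_n<-\delta/2\}$ implies that either $P_0(\psi=1)\ge 1/4$ or $\bar P(\psi=0)\ge 1/4$; averaging over $k$ in the latter case shows there is always some $k\in\{0,\ldots,M\}$ with $P_k(|T_n-f_k^*|\ge\delta/2)\ge 1/4$. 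Since $\rho_n\delta$ is a positive constant for $\rho_n:=(n/\log n)^{\beta/(2\beta+d)}$ and $w$ is monotone non-decreasing with $w\not\equiv 0$, this yields $\sup_{f\in\mathcal F_\beta(L)}\Exp_f[w(\rho_n|T_n-f^*|)]\ge\tfrac14 w(c_4/2)$, i.e.\ \eqref{lower_minimum_value_nonconvex}. The main technical obstacle is the chi-squared computation: it exploits Gaussianity (to get an exact MGF) and the disjoint-support structure (to collapse off-diagonal terms), and the fact that the smoothness constraint and the testing constraint balance at the same rate is exactly what inserts the $\log n$ factor relative to the pointwise-estimation rate $n^{-\beta/(2\beta+d)}$.
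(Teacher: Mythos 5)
Your proposal is correct and follows essentially the same route as the paper's proof: both test $f\equiv 0$ against a uniform mixture of $\asymp h^{-d}$ disjointly supported negative bumps of depth $\asymp(\log n/n)^{\beta/(2\beta+d)}$, use the Gaussian MGF identity plus disjoint supports to bound the $\chi^2$-divergence of the mixture from the null, and convert the resulting indistinguishability into a lower bound on estimating the functional $f\mapsto f^*$. The only cosmetic difference is that the paper packages the final reduction as a special case of Theorem~2.15 in \cite{Tsybakov09} (their Lemma~\ref{lem:lower_tsybakov_book}) and cites a lemma of Carpentier et al.\ for the chi-squared identity, whereas you write out the two-point Le Cam test explicitly and derive the MGF identity directly; the substance is identical.
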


Theorem \ref{th:lower_bound_nonconvex} implies that $\left(\frac{n}{\log n}\right)^{-\frac{\beta}{2 \beta+d}}$ is the minimax rate of estimating the minimum value $f^*$ on the class $\mathcal{F}_{\beta}(L)$. Indeed, the matching upper bound with the rate $\left(\frac{n}{\log n}\right)^{-\frac{\beta}{2 \beta+d}}$ is obtained in a trivial way if we estimate $f^*$ by the minimum of any rate optimal (in supremum norm) nonparametric estimator of $f$, for example, by the local polynomial estimator as in \cite{stone1982optimal}. 

Thus, if we drop the assumption of strong convexity, the minimax rate deteriorates only by a logarithmic factor. It suggests that strong convexity is not a crucial advantage in estimation of the minimum value of a function under the passive design. 

\section{Conclusion}

In this paper, we considered the problem of estimating the minimizer and the minimum value of a regression function from i.i.d data with a focus on highly smooth and strongly convex regression functions. We provide upper bounds for the proposed algorithms and minimax lower bounds for all estimators. We show that the minimax optimal rates of estimating the minimizer are the same as for estimating the gradient of the regression function while for the minimum value they are the same as for estimation of a regression function at a fixed point. To estimate the minimum value, we propose a two-stage procedure, where we first estimate the location of the minimum, and then the function value at the estimated location. We show that this two-stage procedure achieves minimax optimal rates of convergence.

 An interesting open question is to make our algorithms adaptive to the unknown smoothness $\beta$ and to the strong convexity parameter $\alpha$. Adaptation to $\beta$ needs developing a data-driven choice of the smoothing parameter $h$ and of the regularization parameter $\lambda.$  Optimal rates for estimation of the minimum value $f^*$ under adaptation to $\beta$ are presumably slower  by a logarithmic factor than the rates that we established in the setting with known $\beta$.

\section{Proofs}
In this section, we provide the proofs of Theorems \ref{pointest} -- \ref{funcest}, \ref{lower_bound} and \ref{th:lower_bound_nonconvex}. Section \ref{proof_main_location} is
devoted to the proofs of Theorems \ref{pointest} and \ref{thm:opt-error} on the upper bounds for Algorithm \ref{algo}.  Section \ref{proof_of_bias} provides the proof of Theorem \ref{funcest} on the behavior of regularized local polynomial estimator. In Sections \ref{proof_lower_bound} and \ref{sec:proof_lower_bound_nonconvex}, we prove the results on the lower bounds (Theorems \ref{lower_bound} and \ref{th:lower_bound_nonconvex}, respectively).


\subsection{Proof of Theorem \ref{pointest}}\label{proof_main_location}

We will use the following notation. For any $k \in [n]$, and $i \in [k]$, let 
\begin{align*}
    \bR_{i,k}(\bx)& = \bU\left(\frac{\bx_i - \bx}{h_k}\right)K\left(\frac{\bx_i - \bx}{h_k}\right),
\end{align*}
and 
$$
\bC_k(\bx)= \frac{1}{kh_k^{d}}\sum_{i=1}^{k}\bR_{i,k}(\bx)f(\bx_i), 
\quad 
\bG_{k}(\bx)= \frac{1}{kh_k^{d}}\sum_{i=1}^{k}\bR_{i,k}(\bx)\xi_i, \quad 
E_k(\bx) = \Exp[B_{k}(\bx)].
$$
Note that $\bD_{k}(\bx)=\bC_{k}(\bx)+\bG_{k}(\bx)$.

We first prove some preliminary lemmas. The following lemma provides an upper bound on the bias of $\bg_{k,\lambda_k}$. 
\begin{lemma}\label{bias1}
For $k \in [n]$, let $\bg_{k,\lambda_k}$ be defined by Algorithm \ref{algo}. Let Assumptions \ref{ass:kernel} -- \ref{distass} hold.
Then 
\begin{equation}\label{gradient_bound}
   \sup_{\bx\in\com} \norm{\Exp[\bg_{k,\lambda_k}(\bx)] - \nabla f(\bx)}\leq \cst
    \left(\frac{\log(k+1)}{k}\right)^{\frac{\beta-1}{2\beta+d}}.
\end{equation}
\end{lemma}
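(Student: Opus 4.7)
My strategy is to reduce the gradient bias to the bias of the underlying local-polynomial coefficient vector, then exploit the Taylor expansion of $f$ together with concentration of the design Gram matrix. Because the $d$ rows of $A$ pick out the multi-indices $\bm^{(2)},\dots,\bm^{(d+1)}$ that correspond to the first-order partial derivatives, one has $A\btheta(\bx)=h_k\nabla f(\bx)$. The noise $\bG_k(\bx)$ has conditional mean zero given the design by Assumption~\ref{assnoise}, so $\Exp[\hat\btheta_{k,\lambda_k}(\bx)]=\Exp[B_{k,\lambda_k}(\bx)^{-1}\bC_k(\bx)]$ and the theorem reduces to showing
\[
\sup_{\bx\in\com}\bigl\|\Exp[B_{k,\lambda_k}(\bx)^{-1}\bC_k(\bx)]-\btheta(\bx)\bigr\|\le \cst\,h_k^{\beta}.
\]
Dividing by $h_k$ and using $h_k^{\beta-1}=(\log(k+1)/k)^{(\beta-1)/(2\beta+d)}$ then gives \eqref{gradient_bound}.

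The next step is the decomposition
\[
\Exp[B_{k,\lambda_k}^{-1}\bC_k]-\btheta=\Exp\bigl[B_{k,\lambda_k}^{-1}(\bC_k-B_k\btheta)\bigr]-\lambda_k\,\Exp[B_{k,\lambda_k}^{-1}]\,\btheta.
\]
Observe that $\btheta(\bx)^\top\bU((\bx_i-\bx)/h_k)$ is exactly the Taylor polynomial of $f$ of degree $\ell=\lfloor\beta\rfloor$ centered at $\bx$ and evaluated at $\bx_i$. Hence $f\in\mathcal{F}_\beta(L)$ together with $\Supp(K)\subset\{\|\bv\|\le 1\}$ yields $|f(\bx_i)-\btheta(\bx)^\top\bU((\bx_i-\bx)/h_k)|\le L\|\bx_i-\bx\|^\beta\le Lh_k^\beta$ whenever the $i$-th summand carries nonzero weight. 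Consequently $\|\bC_k(\bx)-B_k(\bx)\btheta(\bx)\|\le Lh_k^\beta\,Q_k(\bx)$, where $Q_k(\bx)=\frac{1}{kh_k^d}\sum_i\|\bU((\bx_i-\bx)/h_k)\|K((\bx_i-\bx)/h_k)$ is a.s.\ bounded and has $\Exp[Q_k(\bx)]\le\cst$ under Assumption~\ref{distass}. Combined with $\lambda_k=h_k^\beta$ from Algorithm~\ref{algo}, both summands in the decomposition will be of order $h_k^\beta$ once the random matrix $B_{k,\lambda_k}^{-1}$ is tamed.

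To handle $B_{k,\lambda_k}^{-1}$, I introduce the event $\mathcal{E}_k(\bx)=\{\lambda_{\min}(B_k(\bx))\ge c_1\}$ for a suitable constant $c_1>0$. Writing $E_k(\bx)=\int\bU(\bv)\bU^\top(\bv)K(\bv)p(\bx+h_k\bv)\,d\bv$, Assumption~\ref{distass} and the positive-definiteness of the kernel moment matrix give $\lambda_{\min}(E_k(\bx))\ge\cst p_{\min}$, and a matrix Bernstein inequality applied to $B_k(\bx)-E_k(\bx)$ (whose summands have operator norm at most $\cst/(kh_k^d)$) produces $\mathbb{P}(\mathcal{E}_k(\bx)^c)\le 2S\exp(-\cst kh_k^d)$. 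On $\mathcal{E}_k(\bx)$ one has $\|B_{k,\lambda_k}(\bx)^{-1}\|_{\op}\le 1/c_1$, hence the contribution of both terms in the decomposition is bounded by $\cst(h_k^\beta+\lambda_k)=\cst h_k^\beta$. On $\mathcal{E}_k(\bx)^c$ I rely on the crude deterministic estimate $\|B_{k,\lambda_k}(\bx)^{-1}\|_{\op}\le 1/\lambda_k$ together with $\|\bC_k-B_k\btheta\|\le\cst h_k^\beta/h_k^d$; this is harmless because $kh_k^d=k^{2\beta/(2\beta+d)}(\log(k+1))^{d/(2\beta+d)}$ dominates any polynomial in $|\log h_k|$, so the exponentially small failure probability absorbs the polynomial blow-up.

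The remaining task is to make these bounds uniform in $\bx\in\com$. I would cover $\com$ by an $\epsilon$-net of cardinality $(\cst/\epsilon)^d$, exploit that $\bx\mapsto B_k(\bx)$ and $\bx\mapsto\bC_k(\bx)$ are Lipschitz in $\bx$ with constants of order $1/h_k^{d+1}$ (using that $K$ is $L_K$-Lipschitz by Assumption~\ref{ass:kernel} and $\bU$ is a vector of polynomials), and take a union bound. The main obstacle is precisely this uniformization: one must pick $\epsilon$ as a sufficiently small positive power of $h_k$ so that (i) the Lipschitz interpolation error is absorbed by $h_k^\beta$, while simultaneously (ii) $\log((\cst/\epsilon)^d)\ll kh_k^d$ so the union bound keeps the failure probability exponentially small. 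Putting the pieces together yields $\sup_{\bx\in\com}\|\Exp[\hat\btheta_{k,\lambda_k}(\bx)]-\btheta(\bx)\|\le\cst h_k^\beta$, and division by $h_k$ completes the proof.
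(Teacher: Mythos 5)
Your proof is correct in substance but takes a genuinely different route from the paper's, and one step is superfluous.

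\textbf{Where the approaches differ.} You reduce the claim to $\sup_{\bx}\|\Exp[B_{k,\lambda_k}^{-1}\bC_k]-\btheta\|\le \cst h_k^{\beta}$ and then use the clean algebraic identity
$\Exp[B_{k,\lambda_k}^{-1}\bC_k]-\btheta=\Exp[B_{k,\lambda_k}^{-1}(\bC_k-B_k\btheta)]-\lambda_k\Exp[B_{k,\lambda_k}^{-1}]\btheta$,
which isolates a single Taylor-remainder term (weighted by the genuinely random resolvent) and a single Tikhonov-bias term $\lambda_k$, both of order $h_k^\beta$ after a good/bad-event split controlled by matrix Bernstein. The paper instead decomposes into three terms by interpolating between the random resolvent $B_{k,\lambda_k}^{-1}$, the regularized deterministic one $(E_k+\lambda_k\mathbf{I})^{-1}$, and the unregularized deterministic one $E_k^{-1}$: a Taylor term (bounded via Lemma~\ref{elbound}), a regularization term of order $h_k^{-1}\lambda_k$, and a stochastic fluctuation term that is controlled not by a high-probability event but by fourth-moment bounds on $\|B_{k,\lambda_k}^{-1}-(\Exp[B_{k,\lambda_k}])^{-1}\|_{\op}$ and $\|\bC_k-\Exp[\bC_k]\|$ developed in Lemmas~\ref{boundB}, \ref{boundCX}, and \ref{covariance}. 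Your route is shorter and more self-contained; the paper's moment-based machinery is set up once and reused in Lemma~\ref{variance1}, where it is unavoidable because that lemma does put the supremum \emph{inside} the expectation.

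\textbf{The unnecessary step.} The $\epsilon$-net / union-bound paragraph at the end is not needed for this lemma. The target quantity is $\sup_{\bx\in\com}\|\Exp[\bg_{k,\lambda_k}(\bx)]-\nabla f(\bx)\|$, i.e., a supremum of a non-random bound: your event $\mathcal{E}_k(\bx)$ and all the estimates hold pointwise in $\bx$ with constants that depend only on $p_{\min}$, $p_{\max}$, $K$, and the uniform bounds on $f$ over $\com'$, so you may simply take the supremum of the right-hand side. Covering arguments become essential only when one needs $\Exp[\sup_{\bx}\|\cdot\|]$, as in Lemma~\ref{variance1} and the auxiliary Lemmas~\ref{boundBsup}--\ref{boundC}; here they merely add length. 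With that paragraph removed, what remains is a valid and, in my view, more transparent alternative proof.
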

\begin{proof}
By Lemma \ref{elbound}(ii), matrix $E_k(\bx)$ is positive definite for all $\bx\in \com$.
 Fix $\bx\in \com$ and introduce the notation $\bphi_k = \bg_{k,\lambda_k}(\bx) - h_k^{-1}\left(AE_k(\bx)^{-1}B_{k}(\bx)\bc_{h_k}(f, \bx)\right)$, where $$\bc_h(f,\bx) = \left(h^{|\bm^{(1)}|}D^{\bm^{(1)}}f(\bx),\dots,h^{|\bm^{(S)}|}D^{\bm^{(S)}}f(\bx) \right)^{\top}$$ 
 for $h>0$.
 Noticing that 
$$ \Exp\left[h_k^{-1}A E_k(\bx)^{-1} B_{k}(\bx)\bc_{h_k}(f,\bx)\right]=h_k^{-1}A\bc_{h_k}(f,\bx)  =\nabla f(
    \bx)$$ 
    we get
\begin{align*}
    \Exp\left[\bphi_k\right] =  \Exp\left[\bg_{k,\lambda_k}(\bx)\right] - \nabla f(
    \bx).
\end{align*}
 Thus, to conclude the proof we need to bound $\norm{\Exp\left[\bphi_k\right]}$ from above. Define
\begin{align*}
    \bpsi_{1,k} &= h_k^{-1}\left(AE_k(\bx)^{-1}\bC_k(\bx)\right),
    \\\bpsi_{2,k}&= h_k^{-1}\left(A(E_k(\bx)+\lambda_k\mathbf{I})^{-1}\bC_k(\bx)\right).
\end{align*}
We have
\begin{align*}
    \norm{\Exp[\bphi_k]} 
    &
    \leq \underbrace{\norm{\Exp[\bpsi_{1,k} - h_k^{-1}\left(AE_k(\bx)^{-1}B_k(\bx)\bc_k(f, \bx)\right)]}}_{\text{term I}}
    +
    \underbrace{\norm{\Exp[\bpsi_{2,k} - \bpsi_{1,k}]}}_{\text{term II}} 
    \\
    &\quad\quad + \underbrace{\norm{\Exp[\bg_{k,\lambda_k}(\bx) -  \bpsi_{2,k}]}}_{\text{term III}}.
\end{align*}
We now establish upper bounds for each of the three terms in the above expression. We have
\begin{align*}
    \text{term I} &= h_k^{-1}\norm{AE_k(\bx)^{-1}\Exp\left[\frac{1}{kh_k^{d}}\sum_{i=1}^{k}\bR_{i,k}(\bx)\left(f(\bx_i) - \bU^{\top}\left(\frac{\bx_i - \bx}{h_k}\right)\bc_{h_k}(f,\bx)\right)\right]}
    \\
    &\leq h_k^{-1}\norm{AE_k(\bx)^{-1}}_{\op}\norm{\Exp\left[\frac{1}{kh_k^{d}}\sum_{i=1}^{k}\bR_{i,k}(\bx)\left(f(\bx_i) - \bU^{\top}\left(\frac{\bx_i - \bx}{h_k}\right)\bc_{h_k}(f,\bx)\right)\right]}.
\end{align*}
By Lemma \ref{elbound}(ii) and the fact that $\norm{A}_{\op}\leq 1$ we have $\norm{AE_k(\bx)^{-1}}_{\op}\leq \lambda_{\min}^{-1}$. Therefore,
\begin{align*}
    \text{term I} &\leq h_{k}^{-1}\lambda_{\min}^{-1}\left(\frac{1}{kh_k^{d}}\sum_{i=1}^{k}\Exp\left[\norm{\bR_{i,k}(\bx)\left(f(\bx_i) - \bU^{\top}\left(\frac{\bx_i - \bx}{h_k}\right)\bc_{h_k}(f,\bx)\right)}\right]\right) \enspace.
\end{align*}
Since by Assumption \ref{mainassump}(ii), $f \in \mathcal{F}_{\beta}(L)$ for any $i\in[k]$ we have
\begin{align*}
    |f(\bx_i)-\bU^{\top}\left(\frac{\bx_i - \bx}{h_k}\right)\bc_{h_k}(f,\bx)|\leq L\norm{\bx - \bx_i}^{\beta},
\end{align*}
so that
\begin{align}
\begin{aligned}\label{eq:dop1}
 \text{term I} &\leq Lh_{k}^{-1}\lambda_{\min}^{-1}\left(\frac{1}{kh_k^{d}}\sum_{i=1}^{k}\Exp\left[\norm{\bR_{i,k}(\bx)}\norm{\bx - \bx_i}^{\beta}\right]\right)
    \\&= Lh_k^{-d-1}\lambda_{\min}^{-1}\int_{\mathbb{R}^d} \norm{\bx - \bu}^{\beta}\norm{\bU\left(\frac{\bu - \bx}{h_k}\right)K\left(\frac{\bu - \bx}{h_k}\right)}p(\bu)\drm \bu
    \\&= Lh_{k}^{\beta-1}\lambda_{\min}^{-1}\int_{\mathbb{R}^d}\norm{\bw}^{\beta}\norm{U(\bw)K(\bw)}p(\bx + h_k\bw)\drm\bw
    \leq \cst h_k^{\beta-1}\enspace.   
\end{aligned} 
\end{align}
Next,
\begin{align*}
        \text{term II} &= h_k^{-1}\norm{A \left(\left(E_k(\bx)+\lambda_k\mathbf{I}\right)^{-1} - E_k(\bx)^{-1}\right)\Exp\left[\bC_k(\bx)\right]} 
    \\&\leq h_k^{-1}\lambda_k\norm{A}_{\op}\norm{E_k(\bx)^{-1}}_{\op}\norm{\left(E_k(\bx)+\lambda_k\mathbf{I}\right)^{-1}}_{\op}\Exp\left[\norm{\bC_k(\bx)}\right]\enspace.
\end{align*}
By Assumption \ref{mainassump}(iv), we have $\sup_{\bx \in \com'}\vert f(\bx)\vert \leq M$. Using this inequality and Lemma \ref{elbound}(i) we get 
\begin{align}\label{eq:dop2}
 \sup_{\bx \in \com}\Exp\left[\norm{\bC_k(\bx)}\right] \leq Mp_{\max}\nu_{1}.   
\end{align}
 Moreover, Lemma \ref{elbound}(ii) implies that $\norm{E_k(\bx)^{-1}}_{\op}\norm{\left(E_k(\bx)+\lambda_k\mathbf{I}\right)^{-1}}_{\op} \leq \lambda_{\min}^{-2}$. Therefore, 
\begin{align}\label{eq:dop3}
        \text{term II} \leq \cst h_k^{-1}\lambda_k.
\end{align}
Finally, we bound term III. Using Assumption \ref{assnoise} it is easy to see that 
\begin{align}\label{eq:dop0}
 \Exp[\bg_{k,\lambda_k}(
\bx)] = \Exp\left[h_k^{-1}A{B_{k,\lambda_k}(\bx)^{-1}}\bC_{k}(\bx)\right].
\end{align}
Taking into account \eqref{eq:dop0} and the fact that $E_k(\bx)+\lambda_k\mathbf{I}=\Exp[B_{k,\lambda_k}(\bx)]$ we obtain  
\begin{align*}
    \text{term III} &\leq  h_k^{-1}\norm{\Exp\bigg[A \left({B_{k,\lambda_k} (\bx)^{-1}}- \left(\Exp[B_{k,\lambda_k}(\bx)]\right)^{-1}\right)
    \left(\bC_k(\bx)-\Exp\left[\bC_k(\bx)\right]\right)\bigg]}
    \\&\quad\quad+ h_k^{-1}\norm{\Exp\bigg[A \left({B_{k,\lambda_k} (\bx)^{-1}} - \left(\Exp[B_{k,\lambda_k}(\bx)]\right)^{-1}\right)
    \Exp\left[\bC_k(\bx)\right]\bigg]}
    \\&\leq h_k^{-1}\Exp\left[\norm{{B_{k,\lambda_k} (\bx)^{-1}} - \left(\Exp[B_{k,\lambda_k}(\bx)]\right)^{-1}}_{\op}\norm{\bC_k(\bx)-\Exp\left[\bC_k(\bx)\right]}\right]
    \\&\quad\quad+h_k^{-1}\Exp\left[\norm{{B_{k,\lambda_k} (\bx)^{-1}} - \left(\Exp[B_{k,\lambda_k}(\bx)]\right)^{-1}}_{\op}\right]\sup_{\bx\in\com}\Exp\left[\norm{\bC_k(\bx)}\right]
    \enspace.
\end{align*}
Using Lemma \ref{covariance} and \eqref{eq:dop2} we find:
\begin{align*}
    \text{term III} \leq \cst \left(k^{-1}h_k^{-d-1} +h_k^{-1}\Exp\left[\norm{B_{k,\lambda_k}(\bx)^{-1} - \left(\Exp[B_{k,\lambda_k}(\bx)]\right)^{-1}}_{\op}\right]\right).
\end{align*}
 This and Lemma \ref{boundB} yield 
\begin{align}\label{eq:dop4}
    \text{term III}\leq
    \cst\left( k^{-1}h_k^{-d-1}+k^{-\frac{1}{2}}h_k^{-\frac{d}{2} - 1}\right) \leq \cst k^{-\frac{1}{2}}h_k^{-\frac{d}{2} - 1}\enspace.
\end{align}
Combining \eqref{eq:dop1}, \eqref{eq:dop3} and \eqref{eq:dop4} we obtain
\begin{align*}
    \norm{\Exp[\bg_{k,\lambda_k}(\bx)] - \nabla f(\bx)}\leq \cst  \left(h_{k}^{\beta-1} + h_k^{-1}\lambda_k + h_k^{-1-\frac{d}{2}}k^{-\frac{1}{2}}\right).
\end{align*}
Since $h_k = \left(\frac{\log(k+1)}{k}\right)^{\frac{1}{2\beta+d}}$ and $\lambda_k = \left(\frac{\log(k+1)}{k}\right)^{\frac{\beta}{2\beta+d}}$ this inequality implies the result of the lemma.
\end{proof}

The next lemma provides a bound on the stochastic component of the error uniformly over $\Theta.$
\begin{lemma}\label{variance1}
Let $\bg_{k,\lambda_k}$ be defined by Algorithm \ref{algo}, and let  Assumptions \ref{ass:kernel} -- \ref{distass} hold. Then
\begin{align*}
\Exp\left[\sup_{\bx\in\com}\norm{\bg_{k,\lambda_k}(\bx) - \Exp\left[\bg_{k,\lambda_k}(\bx)\right]}^2\right] \leq \cst\left(\frac{\log(k+1)}{k}\right)^{\frac{2(\beta-1)}{2\beta+d}}.
\end{align*}
\end{lemma}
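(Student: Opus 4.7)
The plan is to control the fluctuation $\bg_{k,\lambda_k}(\bx) - \Exp[\bg_{k,\lambda_k}(\bx)]$ uniformly in $\bx\in\com$ by combining a pointwise sub-exponential Bernstein-type bound with a chaining (covering) argument. I would first perform a pointwise analysis: writing $\bD_k=\bC_k+\bG_k$ and using the resolvent identity $B_{k,\lambda_k}^{-1}-(E_k+\lambda_k\mathbf{I})^{-1}=(E_k+\lambda_k\mathbf{I})^{-1}(E_k-B_k)B_{k,\lambda_k}^{-1}$, I would split the error into three centered pieces: (a) the noise contribution $h_k^{-1}A(E_k+\lambda_k\mathbf{I})^{-1}\bG_k(\bx)$, (b) the centered deterministic part $h_k^{-1}A(E_k+\lambda_k\mathbf{I})^{-1}(\bC_k-\Exp\bC_k)$, and (c) a remainder involving $B_{k,\lambda_k}^{-1}-(E_k+\lambda_k\mathbf{I})^{-1}$. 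Each of these is a sum of i.i.d. bounded-plus-subexponential random vectors with variance of order $k^{-1}h_k^{-d}$, so a Bernstein-type inequality gives, for any fixed $\bx$ and $t\ge 1$,
\begin{equation*}
\Pr\!\left(\norm{\bg_{k,\lambda_k}(\bx)-\Exp\bg_{k,\lambda_k}(\bx)}\ge \cst\,h_k^{-1}\sqrt{t/(k h_k^{d})}\right)\le 2e^{-t},
\end{equation*}
valid on the favorable event $\{\lambda_{\min}(B_{k,\lambda_k})\ge \lambda_k\}$ (which holds deterministically).

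To upgrade to a supremum, I would use an $\epsilon$-net of $\com$ of cardinality $N(\epsilon)\lesssim \epsilon^{-d}$. The random field $\bx\mapsto \bg_{k,\lambda_k}(\bx)$ is almost surely Lipschitz with some constant $L_n$ that is polynomial in $k$, $h_k^{-1}$ and $\lambda_k^{-1}$: this follows because $K$ is $L_K$-Lipschitz (Assumption~\ref{ass:kernel}) with compact support, so $B_k(\bx)$ and $\bD_k(\bx)$ are Lipschitz with operator/Euclidean constants of order $h_k^{-d-1}$ (with a bounded-in-expectation scale factor coming from $\sup_i |y_i|$, which is sub-exponential), and then the identity $B_{k,\lambda_k}^{-1}(\bx)-B_{k,\lambda_k}^{-1}(\by)=B_{k,\lambda_k}^{-1}(\bx)(B_k(\by)-B_k(\bx))B_{k,\lambda_k}^{-1}(\by)$ together with $\|B_{k,\lambda_k}^{-1}\|_{\op}\le \lambda_k^{-1}$ gives a polynomial-in-$k$ Lipschitz constant for $\bg_{k,\lambda_k}$.

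Taking $\epsilon=k^{-c}$ for a sufficiently large constant $c$, the oscillation of $\bg_{k,\lambda_k}$ between net points is $O(L_n\epsilon)$, which is negligible compared to the target rate. A union bound over the net with $t\asymp \log N(\epsilon)\asymp d\log k$ yields
\begin{equation*}
\Pr\!\left(\sup_{\bx\in\com}\norm{\bg_{k,\lambda_k}(\bx)-\Exp\bg_{k,\lambda_k}(\bx)}\ge \cst\sqrt{\frac{\log k}{k\, h_k^{d+2}}}+\text{(small)}\right)\le k^{-C},
\end{equation*}
for arbitrarily large $C$, and integrating the tail gives the $L^2$ bound $\cst\,(k h_k^{d+2})^{-1}\log k$. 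Plugging in $h_k=(\log(k+1)/k)^{1/(2\beta+d)}$ yields exactly $(\log(k+1)/k)^{2(\beta-1)/(2\beta+d)}$, as required.

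The main obstacle is step three: getting clean Lipschitz control of $\bg_{k,\lambda_k}(\bx)$ through the matrix inverse $B_{k,\lambda_k}^{-1}(\bx)$, and keeping the Lipschitz constant from blowing up faster than polynomially in $k$, so that picking a polynomially small net resolution $\epsilon$ is enough. The Bernstein tail itself is routine once the pointwise variance is computed, but the chaining step is where all the factors $h_k^{-1},\lambda_k^{-1}$ interact and must be carefully tracked to ensure that the logarithmic price from the union bound is the only additional loss compared with Lemma~\ref{bias1}.
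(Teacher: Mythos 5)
Your proposal is essentially sound and would give the right rate, but it is structured quite differently from the paper's proof. The paper never applies a net/chaining argument to the full random field $\bx\mapsto \bg_{k,\lambda_k}(\bx)$. Instead it first uses the Cauchy--Schwarz inequality at the level of expectations to break $\Exp[\sup_\bx\|\cdot\|^2]$ of each product term into a geometric mean of fourth moments, e.g.\ $\bigl(\Exp[\sup_\bx\|B_{k,\lambda_k}^{-1}(\bx)\|_{\op}^{4}]\,\Exp[\sup_\bx\|\bG_k(\bx)\|^{4}]\bigr)^{1/2}$. The chaining argument (Bernstein plus an $\epsilon$-net with $\epsilon\asymp h_k^{d/2+1}k^{-1/2}$) is then applied separately, inside Lemmas~\ref{boundBsup}, \ref{boundsupsubG} and \ref{boundC}, to the pre-inverse quantities $B_{k}(\bx)$, $\bG_k(\bx)$, $\bC_k(\bx)$, whose Lipschitz constants are elementary consequences of Assumption~\ref{ass:kernel}; the inverse $B_{k,\lambda_k}^{-1}$ is controlled by bootstrapping from $\|B_{k,\lambda_k}(\bx)-\Exp B_{k,\lambda_k}(\bx)\|_{\op}$ using the deterministic bound $\|B_{k,\lambda_k}^{-1}\|_{\op}\le\lambda_k^{-1}$, with no Lipschitz bound for the inverse ever needed. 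Your route, by contrast, directly uniformizes the composite map $\bg_{k,\lambda_k}(\bx)$, which forces you to push the Lipschitz control through $B_{k,\lambda_k}^{-1}(\bx)$ via the identity $B^{-1}(\bx)-B^{-1}(\by)=B^{-1}(\bx)(B(\by)-B(\bx))B^{-1}(\by)$ --- a real extra complication that the paper's factorization avoids. You buy some conceptual directness (one chaining step instead of several), but pay in bookkeeping.

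Two technical caveats in your write-up. First, your pointwise step is only a sketch: pieces (a) and (b) are genuine i.i.d.\ sums and Bernstein applies, but your remainder piece (c) involves the product $(B_{k,\lambda_k}^{-1}-(E_k+\lambda_k\mathbf{I})^{-1})\bD_k$, which is a product of two random, dependent objects and is \emph{not} a sum of i.i.d.\ variables; you would need an extra argument there (e.g.\ intersect high-probability events for $\|B_k-E_k\|_{\op}$ and $\|\bD_k\|$, or a Cauchy--Schwarz step similar to the paper's). Second, the quantity $\sup_i|y_i|$ that enters your Lipschitz constant for $\bD_k$ is not bounded in expectation by a constant; under sub-exponential noise it is $O(\log k)$ with high probability, which is still polynomial and hence harmless for a polynomial net resolution, but the phrase ``bounded-in-expectation scale factor'' understates what is actually needed. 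With those two points repaired, your proof closes the gap to the stated bound, and your arithmetic matching the extra $\sqrt{\log k}$ from the union bound to the $\log(k+1)$ already built into $h_k$ is correct.
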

\begin{proof}
Recalling that $\bD_k(\bx)=\bC_k(\bx)+\bG_k(\bx)$, where $\bC_k(\bx)= \frac{1}{kh_k^{d}}\sum_{i=1}^{k}\bR_{i,k}(\bx)f(\bx_i)$,  $\bG_k(\bx) = \frac{1}{kh_k^{d}}\sum_{i=1}^{k}\bR_{i,k}(\bx)\xi_i$,  and taking into account \eqref{eq:dop0} we obtain
\begin{align}
&  \Exp\left[\sup_{\bx\in\com}\norm{\bg_{k,\lambda_k}(\bx) - \Exp\left[\bg_{k,\lambda_k}(\bx)\right]}^2\right] 
  \leq 2h_k^{-2}\underbrace{\Exp\left[\sup_{\bx\in\com}\norm{A{B_{k,\lambda_k}(\bx)}^{-1}\bG_k(\bx)}^{2}\right]}_{\text{term I}}
  \nonumber
\\
&+2h_k^{-2}\underbrace{\Exp\left[\sup_{\bx\in\com}\norm{A{B_{k,\lambda_k}(\bx)}^{-1}\bC_k(\bx) - \Exp\left[A{B_{k,\lambda_k}(\bx)}^{-1}\bC_k(\bx) \right]}^{2}\right]}_{\text{term II}}.
\label{eq:dop7}
\end{align}
Recalling that $\norm{A}_{\op}\le 1$ and using 
the Cauchy-Schwarz inequality and Lemmas \ref{boundBsup} and \ref{boundsupsubG} we get
\begin{align}
\begin{aligned}\label{eq:dop5}
    \text{term I} &\leq \left(\Exp\left[\sup_{\bx\in\com}\norm{{B_{k,\lambda_k}(\bx)}^{-1}}_{\op}^{4}\right]\Exp\left[\sup_{\bx\in\com}\norm{\bG_k(\bx)}^4\right]\right)^{\frac{1}{2}}
    \\
    &\le  \cst k^{-1}h_k^{-d}\log(k+1).
\end{aligned}
\end{align}
Next, using again the Cauchy-Schwarz inequality we obtain
\begin{align}
\begin{aligned}\label{eq:dlinnoe}
 \text{term II}
    &\leq 3\Exp\left[\sup_{\bx\in\com}\norm{{B_{k,\lambda_k}(\bx)}^{-1}\left(\bC_k(\bx) - \Exp\left[\bC_k(\bx)\right]\right)}^2\right]
    \\
     & \quad + 3\Exp\left[\sup_{\bx\in\com}\norm{\left({B_{k,\lambda_k}(\bx)}^{-1}-\left(\Exp\left[{B_{k,\lambda_k}(\bx)}\right]\right)^{-1}\right)\Exp\left[\bC_k(\bx)\right]}^2\right]
    \\
     & \quad + 3 \sup_{\bx\in\com}\norm{\Exp\left[\left(\left(\Exp\left[{B_{k,\lambda_k}(\bx)}\right]\right)^{-1}-{B_{k,\lambda_k}(\bx)}^{-1}\right)\bC_k(\bx)\right]}^2
     \\
  &\le 3\Exp\left[\sup_{\bx\in\com}\norm{{B_{k,\lambda_k}(\bx)}^{-1}\left(\bC_k(\bx) - \Exp\left[\bC_k(\bx)\right]\right)}^2\right]
    \\
     & \quad + 3\Exp\left[\sup_{\bx\in\com}\norm{{B_{k,\lambda_k}(\bx)}^{-1}-\left(\Exp\left[{B_{k,\lambda_k}(\bx)}\right]\right)^{-1}}_{\op}^2 \sup_{\bx\in\com}\norm{\Exp\left[\bC_k(\bx)\right]}^2\right]
    \\
     & \quad + 3 \sup_{\bx\in\com} \Exp\left[
     \norm{\left(\Exp\left[{B_{k,\lambda_k}(\bx)}\right]\right)^{-1}-{B_{k,\lambda_k}(\bx)}^{-1}}_{\op}^2\right]\Exp[\norm{\bC_k(\bx)}^2]   
\\
&\leq 3\underbrace{\Exp\left[\sup_{\bx\in\com}\norm{{B_{k,\lambda_k}(\bx)}^{-1}\left(\bC_k(\bx) - \Exp\left[\bC_k(\bx)\right]\right)}^2\right]}_{\text{term III}} 
\\&\phantom{\leq}+ 6\underbrace{\Exp\left[\sup_{\bx\in\com}\norm{{B_{k,\lambda_k}(\bx)}^{-1}-\left(\Exp\left[{B_{k,\lambda_k}(\bx)}\right]\right)^{-1}}_{\op}^{2}\right]\sup_{\bx\in\com}\Exp\left[\norm{\bC_{k}(\bx)}^{2}\right]}_{{\text{term IV}}}.   
\end{aligned}  
\end{align}
The Cauchy-Schwarz inequality and Lemmas \ref{boundBsup} and \ref{boundC} imply:
\begin{align*}
\begin{aligned}
   \text{term III} &\leq \left(\Exp\left[\sup_{\bx\in\com}\norm{{B_{k,\lambda_k}(\bx)}^{-1}}^{4}_{\op}\right]\Exp\left[\sup_{\bx\in\com}\norm{\bC_k(\bx) - \Exp\left[\bC_k(\bx)\right]}^{4}\right]\right)^{\frac{1}{2}} 
   \\
   &\le \cst k^{-1}h_k^{-d}\log\left(k+1\right).
\end{aligned}
\end{align*}
Moreover, again by the Cauchy-Schwarz inequality, \eqref{eq:lem:boundCX-1} and Lemma \ref{boundBsup} we get
$$
    \text{term IV} \leq  \cst k^{-1}h_k^{-d}\log\left(k+1\right).
    $$
Thus, 
\begin{align}\label{eq:dop6}
    \text{term II}\leq \cst k^{-1}h_k^{-d}\log(k+1).
\end{align}
We conclude the proof by combining \eqref{eq:dop7} -- \eqref{eq:dop6} and using the fact that $h_k = \left(\frac{\log(k+1)}{k}\right)^{\frac{1}{2\beta+d}}$.
\end{proof}
\begin{lemma}\label{errorgrad}
Let $\bg_{k,\lambda_k}$ be defined by Algorithm \ref{algo}, and let  Assumptions \ref{ass:kernel} --
\ref{distass} hold. Then 
\begin{align}\label{eq:dop8}
    \Exp\left[\sup_{\bx\in\com}\norm{\bg_{k,\lambda_k}(\bx) - \nabla f(\bx)]}^2\right] \leq \cst \left(\frac{\log(k+1)}{k}\right)^{\frac{2(\beta-1)}{2\beta+d}},
\end{align}
and
\begin{align}\label{eq:dop9}
    \Exp\left[\sup_{\bx\in\com}\norm{\bg_{k,\lambda_k}(\bx)}^2\right] \leq \cst. 
\end{align}
\end{lemma}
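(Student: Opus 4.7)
The plan is to derive both bounds by straightforward triangle-inequality decompositions and invoke Lemma \ref{bias1} and Lemma \ref{variance1}, which together control the bias and the uniform stochastic fluctuations of $\bg_{k,\lambda_k}$.

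For inequality \eqref{eq:dop8}, I would write, for every $\bx\in\com$,
\[
\bg_{k,\lambda_k}(\bx) - \nabla f(\bx) = \bigl(\bg_{k,\lambda_k}(\bx) - \Exp[\bg_{k,\lambda_k}(\bx)]\bigr) + \bigl(\Exp[\bg_{k,\lambda_k}(\bx)] - \nabla f(\bx)\bigr),
\]
apply the elementary bound $\|\ba+\bb\|^2\le 2\|\ba\|^2+2\|\bb\|^2$, and then take the supremum over $\bx\in\com$ followed by expectation. The first (stochastic) term is controlled by Lemma \ref{variance1}, and for the second (deterministic bias) term I use that $\sup_{\bx\in\com}\|\cdot\|^2$ commutes with $\Exp$ when the quantity is non-random, so Lemma \ref{bias1} applies after squaring. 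Both bounds are of the same order $(\log(k+1)/k)^{2(\beta-1)/(2\beta+d)}$, and summing them yields \eqref{eq:dop8}.

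For inequality \eqref{eq:dop9}, the plan is the splitting
\[
\|\bg_{k,\lambda_k}(\bx)\|^2 \;\le\; 2\|\bg_{k,\lambda_k}(\bx)-\nabla f(\bx)\|^2 + 2\|\nabla f(\bx)\|^2.
\]
Taking $\sup_{\bx\in\com}$ and expectation, the first term is bounded by \eqref{eq:dop8}, and since $\log(k+1)/k$ is bounded on $k\ge 1$, its exponentiated version is bounded by a constant. For the second term, I would observe that because $f\in\mathcal{F}_\beta(L)$ with $\beta\ge 2$ and $|f|\le M$ on the enlarged set $\com'$ (Assumptions \ref{mainassump}(ii) and (iv)), the gradient $\nabla f$ is continuous and uniformly bounded on the compact set $\com$ by a constant that depends only on $\beta,L,M$ and $\com$; in particular $\sup_{\bx\in\com}\|\nabla f(\bx)\|^2\le \cst$. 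Combining these two pieces yields \eqref{eq:dop9}.

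The only non-mechanical point is the uniform bound on $\|\nabla f\|$ on $\com$ in the second part; this is standard since a bounded $\beta$-Hölder function with $\beta\ge 2$ on a neighborhood of $\com$ has bounded first derivative on $\com$ (e.g., by a standard interpolation inequality applied to the Taylor expansion at any interior point of $\com'$, using that $\com$ is $1$-separated from the boundary of $\com'$). Apart from this, the lemma follows immediately from Lemmas \ref{bias1} and \ref{variance1}, so I do not anticipate any substantial obstacle: the work was effectively done in establishing those two lemmas.
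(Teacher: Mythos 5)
Your proposal is correct and follows exactly the paper's route: decompose the error into the centered stochastic fluctuation (handled by Lemma \ref{variance1}) plus the deterministic bias (handled by Lemma \ref{bias1}) for \eqref{eq:dop8}, and then combine \eqref{eq:dop8} with a uniform bound on $\sup_{\bx\in\com}\norm{\nabla f(\bx)}$ (following from $|f|\le M$ on $\com'$ and $f\in\mathcal{F}_\beta(L)$) for \eqref{eq:dop9}. The paper states these steps as immediate; you have merely spelled out the same elementary inequalities.
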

\begin{proof}
   The bound \eqref{eq:dop8} is immediate in view of Lemmas \ref{bias1} and \ref{variance1}. Next, note that, since $f$ is uniformly bounded by $M$ on $\com$ and $f\in \mathcal{F}_{\beta}(L)$, the value   $ \sup_{\bx\in\com}\norm{\nabla f(\bx)}$ is bounded by a constant depending only on $M$, $L$, $d$ and $\beta$. This fact and 
 \eqref{eq:dop8} imply \eqref{eq:dop9}.  
\end{proof}

{\bf Proof of Theorem \ref{pointest}.}
By the definition of Algorithm \ref{algo} and the contracting property of the Euclidean projection, for any $k \in [n]$ we have
\begin{align*}
   {\Exp[\norm{\bz_{k+1} - \bx^*}^2|\bz_k]} \leq  \norm{\bz_k - \bx^*}^2 - \frac{4}{\alpha k}
   \left( \bz_k - \bx^*\right)^{\top}\Exp\left[\bg_{k,\lambda_k}(\bz_k)|\bz_k\right]  + \frac{4}{\alpha^2k^2}\Exp\left[\norm{\bg_{k,\lambda_k}(\bz_k)}^2|\bz_k\right].
\end{align*}
We further obtain 
\begin{align}\nonumber
\Exp[a_{k+1}|\bz_k] &\leq  a_{k} - \frac{4}{\alpha k}\left( \bz_k - \bx^*\right)^{\top}\nabla f(\bz_k)   +\frac{4}{\alpha k}\norm{\bz_k - \bx^*}\norm{\Exp[\bg_{k,\lambda_k}(\bz_k)|\bz_k] - \nabla f(\bz_k)} 
\\
\label{thm1:1}&\quad\quad+\frac{4}{\alpha^2k^2}\Exp\left[\norm{\bg_{k,\lambda_k}(\bz_k)}^2|\bz_k\right]\enspace,
  \end{align}
where $a_k = \norm{\bz_k - \bx^*}^2$. Since $f$ is an $\alpha$-strongly convex function we have
\begin{align}\label{thm1:2}
    \alpha a_k \leq \left( \bz_k - \bx^*\right)^{\top} \nabla f(\bz_k) \enspace.
\end{align}
Combining \eqref{thm1:1} and \eqref{thm1:2} yields
\begin{align}\label{thm1:3}
   \nonumber\Exp[a_{k+1}|\bz_k] &\leq \left(1 - \frac{4}{k}\right)a_k+\frac{4}{\alpha k}\norm{\bz_k - \bx^*}\norm{\Exp[\bg_{k,\lambda_k}(\bz_k)|\bz_k] - \nabla f(\bz_k)} 
   \\
   &\quad +  \frac{4}{\alpha^2k^2}\Exp\left[\norm{\bg_{k,\lambda_k}(\bz_k)}^2|\bz_k\right].
  \end{align}
 Since $2a b \leq \gamma a^2 + \frac{b^2}{\gamma}$ for any $a, b \in \mathbb{R}$ and $\gamma > 0$ we deduce, with $\gamma=3\alpha/2$, that
 \begin{align}
 \begin{aligned}\label{eq:dopoln}
  \norm{\bz_k  - \bx^*}\norm{\Exp[\bg_{k,\lambda_k}(\bz_k)|\bz_k] - \nabla f(\bz_k)} &\leq \frac{3\alpha}{4}a _k + \frac{1}{3\alpha}\norm{\Exp[\bg_{k,\lambda_k}(\bz_k)|\bz_k] - \nabla f(\bz_k)}^2
     \\
     &\leq \frac{3\alpha}{4}a _k + \frac{1}{3\alpha}\Exp[\norm{\bg_{k,\lambda_k}(\bz_k)- \nabla f(\bz_k)}^2|\bz_k].  
\end{aligned}   
 \end{align}
Using this inequality in \eqref{thm1:3} and taking the expectations yields
\begin{align*}
   \nonumber r_{k+1} &\leq \left(1 - \frac{1}{k}\right) r_k+\frac{4}{3\alpha^2 k}\Exp\left[\norm{\bg_{k,\lambda_k}(\bz_k) - \nabla f(\bz_k)}^2\right] +  \frac{4}{\alpha^2k^{2}}\Exp\left[\norm{\bg_{k,\lambda_k}(\bz_k)}^2\right]
   \\&\leq \left(1 - \frac{1}{k}\right)r_k+\frac{4}{3\alpha^2 k}\Exp\left[\sup_{\bx\in\com}\norm{\bg_{k,\lambda_k}(\bx) - \nabla f(\bx)}^2\right] +  \frac{4}{\alpha^2k^{2}}\Exp\left[\sup_{\bx\in\com}\norm{\bg_{k,\lambda_k}(\bx)}^2\right],
  \end{align*}
where $r_k = \Exp[a_{k}]$.  By invoking Lemma \ref{errorgrad}  we deduce that
\begin{align*}
 r_{k+1} &\leq \left(1 - \frac{1}{k}\right)r_{k}+\cst k^{-1 - \frac{2(\beta-1)}{2\beta+d}}\log(k+1)^{\frac{2(\beta-1)}{2\beta+d}}\alpha^{-2}\enspace.
\end{align*}
Finally, applying Lemma \ref{lemm:akhavan_recursive} with $b_k=r_k \log(k)^{-\frac{2(\beta-1)}{2\beta+d}}$ we obtain the following bound that concludes the proof:
\begin{align*}
    \Exp[\norm{\bz_n - \bx^*}^2] \leq \cst \Big(\frac{(\text{diam}(\com))^2}{n} + n^{-\frac{2(\beta-1)}{2\beta+d}}\alpha^{-2}\Big) \log(n)^{\frac{2(\beta-1)}{2\beta+d}},
\end{align*}
where $\text{diam}(\com) = \sup_{\bx,\by \in \com}\norm{\bx - \by}$ and we used the fact that $\log(n+1) \leq 2\log(n)$ for $n\geq 2$.

\bigskip

{\bf Proof of Theorem \ref{thm:opt-error}.}
The definition of Algorithm \ref{algo} implies the inequality $\norm{\bz_{k+1}-\bx^*}^{2} \leq \norm{\bz_{k}-\eta_k\bg_{k,\lambda_k}(\bz_k) - \bx^*}^2$. Therefore, 
\begin{align}\label{eq1:opt_error}
    \langle \bg_{k,\lambda_k}(\bz_k), \bz_k - \bx^*\rangle \leq \frac{\norm{\bz_k-\bx^*}^2-\norm{\bz_{k+1}-\bx^*}^2}{2\eta_k}+\frac{\eta_k}{2}\norm{\bg_{k,\lambda_k}(\bz_k)}^2\enspace.
\end{align}
On the other hand, by Assumption \ref{mainassump}(iii) we have
\begin{align}\label{eq2:opt_error}
    f(\bz_k) - f^*\leq \left( \bz_k - \bx^*\right)^{\top}\nabla f(\bz_k)-\frac{\alpha}{2}\norm{\bz_k - \bx^*}^2\enspace. 
\end{align}
Combining \eqref{eq1:opt_error} and \eqref{eq2:opt_error} gives
\begin{align*}
    \Exp\left[f(\bz_k) - f^*|\bz_k\right]&\leq \norm{\Exp\left[\bg_{k,\lambda_k}(\bz_k)|\bz_k\right] - \nabla f(\bz_k)}
    \norm{\bz_k-\bx^*} + \frac{1}{2\eta_k}\Exp\left[a_k - a_{k+1}|\bz_k\right] \\&\phantom{\leq}+\frac{\eta_k}{2}\Exp\left[\norm{\bg_{k,\lambda_k}(\bz_k)}^2|\bz_k\right] -\frac{\alpha}{2}a_k\enspace,
\end{align*}
where $a_k = \norm{\bz_k - \bx^*}^2$. Acting as in \eqref{eq:dopoln} but now with $\gamma= \alpha/2$ we find
\begin{align*}
    \Exp\left[f(\bz_k) - f^*|\bz_k\right]
    &\leq 
    \frac{1}{\alpha}\Exp\left[\norm{\bg_{k,\lambda_k}(\bz_k) - \nabla f(\bz_k)}^2|\bz_k\right]
    + \frac{1}{2\eta_k}\Exp\left[a_k - a_{k+1}|\bz_k\right] \\&\phantom{\leq}+\frac{\eta_k}{2}\Exp\left[\norm{\bg_{k,\lambda_k}(\bz_k)}^2|\bz_k\right] -\frac{\alpha}{4}a_k .
\end{align*}
 Taking the expectations of both sides of this inequality and recalling the notation $r_k = \Exp\left[a_k\right]$ we obtain
\begin{align*}
    \Exp\left[f(\bz_k) - f^*\right]&\leq \frac{1}{\alpha}\Exp\left[\sup_{\bx\in\com}\norm{\bg_{k,\lambda_k}(\bx) - \nabla f(\bx)}^2\right]
    + \frac{1}{2\eta_k}\left(r_k - r_{k+1}\right) \\&\phantom{\leq}+\frac{\eta_k}{2}\Exp\left[\sup_{\bx\in\com}\norm{\bg_{k,\lambda_k}(\bx)}^2\right] -\frac{\alpha}{4}r_k\enspace.
\end{align*}
Lemma \ref{errorgrad} and the fact that $\eta_k = \frac{2}{\alpha k}$ further imply
\begin{align*}
    \Exp\left[f(\bz_k) - f^*\right]&\leq  \frac{\alpha k}{4}\left(r_k - r_{k+1}\right) -\frac{\alpha}{4}r_k+\cst\left(\frac{\log(k+1)}{k}\right)^{\frac{2(\beta-1)}{2\beta+d}} \alpha^{-1}\enspace.
\end{align*}
Summing both sides of this inequality from $1$ to $n$ yields
\begin{align*}
\sum_{k=1}^{n}\Exp\left[f(\bz_k) - f^*\right]&\leq  \cst n^{\frac{2+d}{2\beta+d}}\log(n)^{\frac{2(\beta-1)}{2\beta+d}} \alpha^{-1}\enspace,
\end{align*}
where we used the inequalities $\sum_{k=1}^{n}k^{-\frac{2(\beta-1)}{2\beta+d}}\leq \frac{2\beta+d}{2+d}n^{\frac{2+d}{2\beta+d}}$, and $\log(n+1)\leq 2\log(n)$ for $n\geq 2$. We conclude the proof by using the convexity of $f$ and Jensen's inequality.

\subsection{Proof of Theorem \ref{funcest}}\label{proof_of_bias}

Theorem \ref{funcest} is an immediate consequence of the bounds on the bias and variance of the regularized local polynomial estimator $f_n$ in Lemmas \ref{bias2} and \ref{var} below. In the proofs of these lemmas, we will write for brevity $h=h_{m:n}$ and $\lambda=\lambda_{m:n}$. We will use the following notation: 
\begin{align*}
    \bR_{k}(\bx)& = \bU\left(\frac{\bx_k - \bx}{h}\right)K\left(\frac{\bx_k - \bx}{h}\right), \quad B_{m:n,\lambda}(\bx)=B_{m:n}(\bx)+\lambda \mathbf{I},
    \\
    \bC_{m:n}(\bx)&=\frac{2}{nh^{d}}\sum_{k=m+1}^{n}\bR_{k}(\bx)f(\bx_k), \quad \bG_{m:n}(\bx) = \frac{2}{nh^{d}}\sum_{k=m+1}^{n}\bR_{k}(\bx)\xi_k.
\end{align*}
The proofs will use the fact that Lemmas \ref{boundB} and \ref{boundC} apply not only to the vector $\bC_k(\bx)$ and matrix $B_{k,\lambda}(\bx)$ but also quite analogously to $\bC_{m:n}(\bx)$ and  $B_{m:n,\lambda}(\bx)$, so that under the assumptions of Theorem \ref{funcest}  we have
\begin{align}\label{eq:lem18_2n}
\sup_{\bx\in\com}\Exp\left[\norm{{B_{m:n,\lambda}(\bx)}^{-1}}_{\op}^{4}\right] \leq \cst \lambda_{\min}^{-4},
\end{align}
\begin{align}\label{eq:lem18_3n}
\sup_{\bx\in\com}\Exp\left[\norm{B_{m:n,\lambda}(\bx)^{-1} - (\Exp[B_{m:n,\lambda}(\bx)])^{-1}}_{\op}^2\right] \leq \cst h^{-d}n^{-1}.
\end{align}
\begin{align}\label{eq:lem:boundCXn}
   \sup_{\bx\in\com} \Exp \left[\norm{\bC_{m:n}(\bx) - \Exp\left[\bC_{m:n}(\bx)\right]}^4\right] \leq \cst h^{-2d}n^{-2}
\end{align}
and 
\begin{align}\label{eq:lem:boundCX-1n}
\sup_{\bx\in\com} \Exp \left[\norm{\bC_{m:n}(\bx))}^4\right] \leq \cst.
\end{align}

The following lemma establishes a bound on the bias of $f_n$.
\begin{lemma}\label{bias2}Under Assumptions \ref{ass:kernel} -- \ref{distass} we have
\begin{align*}
\sup_{\bx\in\com}
    \left|\Exp\left[f_n(\bx)\right] - f(\bx)\right| \leq \cst n^{-\frac{\beta}{2\beta+d}}\enspace.
\end{align*}
\end{lemma}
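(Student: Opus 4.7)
\textbf{Proof plan for Lemma \ref{bias2}.}
Observe first that $\bU^{\top}(0)=(1,0,\dots,0)$ picks out the first coordinate, and the first entry of the vector $\btheta(\bx)=\bc_{h}(f,\bx)$ is exactly $f(\bx)$. Hence
\[
\Exp[\tilde f_{n}(\bx)]-f(\bx)=\bU^{\top}(0)\bigl(\Exp[\hat\btheta_{m:n}(\bx)]-\btheta(\bx)\bigr),
\]
so it suffices to bound $\|\Exp[\hat\btheta_{m:n}(\bx)]-\btheta(\bx)\|$ uniformly in $\bx\in\com$.

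The noise part is killed by conditioning on the design: since $\Exp[\bG_{m:n}(\bx)\mid \bx_{m+1},\dots,\bx_{n}]=0$ by Assumption~\ref{assnoise}, we get $\Exp[\hat\btheta_{m:n}(\bx)]=\Exp[B_{m:n,\lambda}(\bx)^{-1}\bC_{m:n}(\bx)]$. Define the Taylor remainder
\[
\tilde\bC_{m:n}(\bx)=\frac{2}{nh^{d}}\sum_{i=m+1}^{n}\bR_{i}(\bx)\,r_{h}(\bx_{i},\bx),\qquad r_{h}(\bx_{i},\bx):=f(\bx_{i})-\bU^{\top}\!\left(\tfrac{\bx_{i}-\bx}{h}\right)\btheta(\bx),
\]
so that $\bC_{m:n}(\bx)=B_{m:n}(\bx)\,\btheta(\bx)+\tilde\bC_{m:n}(\bx)$. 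Using the identity $B_{m:n,\lambda}(\bx)^{-1}B_{m:n}(\bx)=\mathbf{I}-\lambda B_{m:n,\lambda}(\bx)^{-1}$, the bias decomposes as
\[
\Exp[\hat\btheta_{m:n}(\bx)]-\btheta(\bx)=-\lambda\,\Exp[B_{m:n,\lambda}(\bx)^{-1}]\,\btheta(\bx)+\Exp[B_{m:n,\lambda}(\bx)^{-1}\tilde\bC_{m:n}(\bx)].
\]

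The first (regularization) term is bounded by $\lambda\,\Exp\|B_{m:n,\lambda}(\bx)^{-1}\|_{\op}\cdot\|\btheta(\bx)\|$; since $f\in\mathcal F_{\beta}(L)$ with $f$ uniformly bounded, $\|\btheta(\bx)\|\le \cst$, and Jensen applied to \eqref{eq:lem18_2n} yields $\Exp\|B_{m:n,\lambda}(\bx)^{-1}\|_{\op}\le\cst$, so this term is $O(\lambda)$. For the second (approximation) term, because $K$ is supported in the unit ball, $\bR_{i}(\bx)\neq 0$ forces $\|\bx_{i}-\bx\|\le h$, whence $|r_{h}(\bx_{i},\bx)|\le L\|\bx_{i}-\bx\|^{\beta}\le Lh^{\beta}$. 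Hence $\|\tilde\bC_{m:n}(\bx)\|\le Lh^{\beta}\cdot\frac{2}{nh^{d}}\sum_{i}\|\bR_{i}(\bx)\|$, and a direct first/second moment computation for i.i.d.\ bounded-support kernel weights (using Assumption~\ref{distass}) gives $\Exp[(\frac{2}{nh^{d}}\sum_{i}\|\bR_{i}(\bx)\|)^{2}]\le\cst$ uniformly in $\bx\in\com$, so $\Exp\|\tilde\bC_{m:n}(\bx)\|^{2}\le\cst h^{2\beta}$. Combining this with \eqref{eq:lem18_2n} via Cauchy--Schwarz controls the second term by $\cst h^{\beta}$. Plugging in $h=n^{-1/(2\beta+d)}$ and $\lambda=n^{-\beta/(2\beta+d)}$ gives the claimed rate.

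The main technical point is verifying the $h^{2\beta}$ bound on $\Exp\|\tilde\bC_{m:n}\|^{2}$ uniformly over $\bx\in\com$: the gain of the full power $h^{\beta}$ (rather than $h$) relies crucially on exploiting the kernel's compact support to turn the H\"older remainder into a deterministic $Lh^{\beta}$ factor, after which only the boundedness (not the smallness) of the averaged weights is needed. Everything else reduces to the uniform bounds \eqref{eq:lem18_2n}--\eqref{eq:lem:boundCX-1n} that the paper has already established for $B_{m:n,\lambda}$ and $\bC_{m:n}$ in parallel with Lemmas~\ref{boundB}--\ref{boundC}.
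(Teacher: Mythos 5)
Your proof is correct, and it takes a genuinely different decomposition than the paper's. The paper introduces the deterministic reference matrix $E_{m:n}(\bx)=\Exp[B_{m:n}(\bx)]$ and splits the bias into three terms (Taylor remainder with $E_{m:n}^{-1}$, regularization gap $E_{m:n}^{-1}$ vs.\ $(E_{m:n}+\lambda\mathbf{I})^{-1}$, and a stochastic "covariance'' term comparing $B_{m:n,\lambda}^{-1}$ with $(\Exp[B_{m:n,\lambda}])^{-1}$), the last of which contributes an $h^{-d/2}n^{-1/2}$ piece that one then has to check is of the right order. You instead exploit the exact algebraic identity $B_{m:n,\lambda}^{-1}B_{m:n}=\mathbf{I}-\lambda B_{m:n,\lambda}^{-1}$, which collapses the bias to exactly two terms, $-\lambda\Exp[B_{m:n,\lambda}^{-1}]\btheta(\bx)$ and $\Exp[B_{m:n,\lambda}^{-1}\tilde\bC_{m:n}(\bx)]$, entirely avoiding the covariance term and the need to anchor to a population matrix. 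Each piece is then controlled with \eqref{eq:lem18_2n}, the H\"older remainder bound, and a straightforward moment computation using Lemma~\ref{elbound}(i) (noting that $nh^d\ge1$ and that the condition $nh^d\ge\lambda^{-2}$ required by \eqref{eq:lem18_2n} holds with equality for the prescribed $h,\lambda$). Your route is cleaner and more self-contained; the paper's choice is presumably made to mirror the three-term structure already used in Lemma~\ref{bias1} for the gradient estimator, where the analogous identity would also apply but the exposition stays parallel across the two bias lemmas. Both yield the same final rate $\cst(h^{\beta}+\lambda)=\cst\, n^{-\beta/(2\beta+d)}$.
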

\begin{proof}
Set $E_{m:n}(\bx) = \Exp[B_{m:n}(\bx)]$. It follows from Lemma \ref{elbound}(ii) that matrix $E_{m:n}(\bx)$ is positive definite  for all $\bx\in \com$ and 
\begin{align}\label{eq:lambda_min}
\sup_{\bx\in\com}\norm{E_{m:n}(\bx)^{-1} }_{\op} \leq \lambda_{\min}^{-1},
\quad
  \sup_{\bx\in\com}\norm{(\Exp\left[{B_{m:n,\lambda}(\bx)}\right])^{-1} }_{\op} \leq \lambda_{\min}^{-1}. 
\end{align}
 Fix $\bx\in \com$ and introduce the notation $\bar\bphi_n = f_n(\bx) - \bU^{\top}(0) E_{m:n}(\bx)^{-1}B_{m:n}(\bx)\bc_h(f,\bx)$, where $\bc_h(f,\bx)$ is defined in the proof of Lemma \ref{bias1}. Using the fact that 
\begin{align} \label{eq:dopp1}
\Exp[\bU^{\top}(0)E_{m:n}(\bx)^{-1}B_{m:n}(\bx)\bc_h(f, \bx)]=\bU^{\top}(0)\bc_h(f,\bx)=f(\bx)
\end{align}
we have
$$
    \Exp\left[\bar\bphi_n\right] = \Exp\left[f_n(\bx)\right] - f(\bx).
$$
Thus, we need to control $|\Exp\left[\bar\bphi_n\right]|$. Introduce the notation
\begin{align*}
    \bar\bpsi_{1,n} &= \bU^{\top}(0) E_{m:n}(\bx)^{-1}\bC_{m:n}(\bx),
    \\
    \bar \bpsi_{2,n}&=\bU^{\top}(0)\left(E_{m:n}(\bx)+\lambda\mathbf{I}\right)^{-1}\bC_{m:n}(\bx).
\end{align*}
We have
\begin{align*}
    |\Exp[\bar\bphi_n]| \leq \underbrace{|\Exp[\bar\bpsi_{1,n} - \bU^{\top}(0)E_{m:n}(\bx)^{-1}B_{m:n}(\bx)\bc_h(f, \bx)]|}_{\text{term I}}+
    \underbrace{|\Exp[\bar\bpsi_{2,n} - \bar\bpsi_{1,n}]|}_{\text{term II}} + \underbrace{|\Exp[f_n (\bx)-  \bar\bpsi_{2,n}]|}_{\text{term III}}.
\end{align*}
Using the fact that $\norm{\bU(0)}=1$ the analysis of the three terms in this expression follows the same lines as the analysis of analogous terms 
in the proof of Lemma \ref{bias1}. The only essential difference is that factor $h^{-1}$ is now dropped. Thus, we get 
 \begin{align*}
    \text{term I} &\leq \cst h^{\beta}.
\end{align*}
The terms II and III are also evaluated in the same way as in Lemma \ref{bias1} (but with no $h^{-1}$ factor) by applying \eqref{eq:lem18_2n} --\eqref{eq:lem:boundCX-1n} instead of the analogous bounds from Lemmas \ref{boundB} and \ref{boundCX}. This yields
\begin{align*}
    \text{term II} \leq \cst \lambda\quad\text{and}\quad\text{term III}\leq \cst h^{-\frac{d}{2}}n^{-\frac{1}{2}}.
\end{align*}
Therefore,  
\begin{align*}
    |\Exp[\bar\bphi_n]| \leq \cst \left(h^{\beta} + \lambda+ h^{-\frac{d}{2}}n^{-\frac{1}{2}}\right)\enspace.
\end{align*}
Since $h = n^{-\frac{1}{2\beta+d}}$ and $\lambda = n^{-\frac{\beta}{2\beta+d}}$ the lemma follows.
\end{proof}
The next lemma establishes a bound on the variance of the regularized local polynomial estimator $f_n$.
\begin{lemma}\label{var}
Under Assumptions \ref{ass:kernel} -- \ref{distass} we have 
\begin{align*}
\sup_{\bx \in \com}
    \Exp\left[\left(f_n(\bx) - \Exp\left[f_n(\bx)\right]\right)^{2}\right] \leq \cst n^{-\frac{2\beta}{2\beta+d}}\enspace.
\end{align*}
\end{lemma}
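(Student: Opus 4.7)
\textbf{Proof plan for Lemma \ref{var}.} The strategy parallels the proof of Lemma \ref{variance1}, with two simplifications: we only need a pointwise bound (no supremum over $\Theta$), and there is no $h^{-2}$ prefactor coming from the differentiation operator $A/h$. Fix $\bx\in\Theta$ and decompose
\[
 f_{n}(\bx)-\Exp[f_{n}(\bx)] \;=\; \underbrace{\bU^{\top}(0)\,B_{m:n,\lambda}(\bx)^{-1}\bG_{m:n}(\bx)}_{=:\,N(\bx)}\;+\;\underbrace{\bU^{\top}(0)\bigl(B_{m:n,\lambda}(\bx)^{-1}\bC_{m:n}(\bx)-\Exp[B_{m:n,\lambda}(\bx)^{-1}\bC_{m:n}(\bx)]\bigr)}_{=:\,S(\bx)},
\]
using Assumption \ref{assnoise}(i) to see that $\Exp[\bG_{m:n}(\bx)\mid\bx_{m+1},\ldots,\bx_{n}]=0$, so $\Exp[N(\bx)]=0$ and $\Exp[f_{n}(\bx)]=\Exp[\bU^{\top}(0)B_{m:n,\lambda}(\bx)^{-1}\bC_{m:n}(\bx)]$. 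It suffices to bound $\Exp[N(\bx)^{2}]$ and $\Exp[S(\bx)^{2}]$ separately by $\cst\, n^{-2\beta/(2\beta+d)}$.

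For the noise term, I would condition on the design. Since the $\xi_k$'s are independent with $\Exp[\xi_k^{2}]\le\cst$ (a consequence of Assumption~\ref{assnoise}(iii)), one obtains
\[
 \Exp\bigl[\bG_{m:n}(\bx)\bG_{m:n}(\bx)^{\top}\mid \bx_{m+1},\ldots,\bx_n\bigr] \;\preceq\; \cst\,(nh^{d})^{-1}\,B_{m:n}(\bx).
\]
Together with the elementary bound $\operatorname{tr}\bigl(B_{m:n,\lambda}^{-1}B_{m:n}B_{m:n,\lambda}^{-1}\bigr)\le S\,\|B_{m:n,\lambda}^{-1}\|_{\op}$, this gives $\Exp[N(\bx)^{2}\mid\bx]\le \cst\,(nh^{d})^{-1}\|B_{m:n,\lambda}(\bx)^{-1}\|_{\op}$. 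Taking expectations and invoking \eqref{eq:lem18_2n} yields $\Exp[N(\bx)^{2}]\le \cst\,(nh^{d})^{-1}=\cst\,n^{-2\beta/(2\beta+d)}$, using $h=n^{-1/(2\beta+d)}$.

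For the signal-fluctuation term $S(\bx)$, I would split it as in \eqref{eq:dlinnoe}:
\[
\begin{aligned}
 S(\bx) &= \bU^{\top}(0)B_{m:n,\lambda}(\bx)^{-1}\bigl(\bC_{m:n}(\bx)-\Exp[\bC_{m:n}(\bx)]\bigr) \\
 &\quad+ \bU^{\top}(0)\bigl(B_{m:n,\lambda}(\bx)^{-1}-(\Exp[B_{m:n,\lambda}(\bx)])^{-1}\bigr)\Exp[\bC_{m:n}(\bx)] \\
 &\quad- \Exp\!\bigl[\bU^{\top}(0)\bigl(B_{m:n,\lambda}(\bx)^{-1}-(\Exp[B_{m:n,\lambda}(\bx)])^{-1}\bigr)\bC_{m:n}(\bx)\bigr].
\end{aligned}
\]
Using $\|\bU(0)\|=1$ and the Cauchy--Schwarz inequality, each of these three pieces is controlled by combining the fourth-moment bounds \eqref{eq:lem18_2n}--\eqref{eq:lem:boundCX-1n} (which are valid for $\bx$ fixed in $\Theta$ as stated in the paper), along with $\sup_{\bx\in\Theta}\|\Exp[\bC_{m:n}(\bx)]\|\le\cst$ (which follows from Assumption \ref{mainassump}(iv) and Lemma \ref{elbound}(i), exactly as in \eqref{eq:dop2}). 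Each term is of order $(nh^{d})^{-1}$, hence $\Exp[S(\bx)^{2}]\le\cst\,(nh^{d})^{-1}=\cst\,n^{-2\beta/(2\beta+d)}$.

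The main technical obstacle is the centering in $S(\bx)$: if one naively applied Cauchy--Schwarz to $B_{m:n,\lambda}^{-1}\bC_{m:n}-\Exp[B_{m:n,\lambda}^{-1}\bC_{m:n}]$ without further decomposition, one would lose a factor because $B_{m:n,\lambda}^{-1}$ and $\bC_{m:n}$ are dependent; the three-term split above replicates the trick used in the proof of Lemma \ref{variance1} and reduces the problem to the already-established moment bounds \eqref{eq:lem18_2n}--\eqref{eq:lem:boundCX-1n}. Combining the bounds on $\Exp[N(\bx)^{2}]$ and $\Exp[S(\bx)^{2}]$, and noting that the constants do not depend on $\bx\in\Theta$, gives the claim.
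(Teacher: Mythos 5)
Your decomposition of $f_n(\bx)-\Exp[f_n(\bx)]$ into the noise term $N(\bx)$ and the signal-fluctuation term $S(\bx)$, and the three-term split you use for $S(\bx)$, are exactly what the paper does; for $S(\bx)$ the two proofs are essentially identical, both feeding into Cauchy--Schwarz and the moment bounds \eqref{eq:lem18_2n}--\eqref{eq:lem:boundCX-1n}. The one place where you take a genuinely different route is the noise term. The paper bounds $\Exp[\|B_{m:n,\lambda}^{-1}\bG_{m:n}\|^2]$ by peeling off $\|B_{m:n,\lambda}^{-1}\|_{\op}^2$, applying Cauchy--Schwarz against the fourth moment bound on $\|B_{m:n,\lambda}^{-1}\|_{\op}$, and then computing the second moment of $n^{-2}h^{-2d}\sum_k\|\bR_k\|^2$ explicitly via Lemma \ref{elbound}(i). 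You instead use the conditional covariance comparison $\Exp[\bG_{m:n}\bG_{m:n}^{\top}\mid\text{design}]\preceq\cst\,(nh^d)^{-1}B_{m:n}(\bx)$ (valid since $\bR_k\bR_k^{\top}\preceq\|K\|_\infty\bU\bU^{\top}K$) together with $B_{m:n,\lambda}^{-1}B_{m:n}B_{m:n,\lambda}^{-1}\preceq B_{m:n,\lambda}^{-1}$, reducing the quadratic form to a single power of $\|B_{m:n,\lambda}^{-1}\|_{\op}$; this lets you invoke only a first-moment bound on $\|B_{m:n,\lambda}^{-1}\|_{\op}$ and skips the explicit moment computation. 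This is a slightly cleaner argument for that piece. (One small stylistic point: for the quadratic form $\bU^{\top}(0)B_{m:n,\lambda}^{-1}B_{m:n}B_{m:n,\lambda}^{-1}\bU(0)$ you do not actually need the trace inequality you state; the operator-norm bound $\bU^{\top}(0)M\bU(0)\le\|M\|_{\op}$ with $\|\bU(0)\|=1$ suffices and avoids the spurious factor $S$.) Both routes give the same rate and draw on the same lemmas, so the proposal is correct.
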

\begin{proof}
Since $\bD_{m:n}(\bx)=\bG_{m:n}(\bx)+\bC_{m:n}(\bx)$ then, taking into account the facts that $\norm{\bU(0)}=1$ and 
\begin{align}
  \Exp\left[f_n(\bx)\right]= \Exp\left[\bU^\top (0)B_{m:n,\lambda}(\bx)^{-1}\bC_{m:n}(\bx)\right]  
\end{align}
we obtain
\begin{align*}
    \Exp[\left(f_n(\bx)-\Exp\left[f_n(\bx)\right]\right)^2] 
    &\leq  2\underbrace{\Exp\left[\norm{{B_{m:n,\lambda}(\bx)^{-1}}\bG_{m:n}(\bx)}^{2}\right]}_{\text{term I}} 
    \\
    & \quad + 2\underbrace{\Exp\left[\norm{{B_{m:n,\lambda}(\bx)^{-1}}\bC_{m:n}(\bx)- \Exp\left[{B_{m:n,\lambda}(\bx)^{-1}}\bC_{m:n}(\bx)\right]}^{2}\right]}_{\text{term II}}\enspace.
\end{align*}
Using Assumption \ref{assnoise} we get
\begin{align*}
    \text{term I} &\leq \cst \,\Exp\left[\norm{B_{m:n,\lambda}(\bx)^{-1} }_{\op}^{2}\left(n^{-2}h^{-2d}\sum_{k = m+1}^{n}\norm{\bR_k(\bx)}^2\right)\right].
\end{align*}
Applying the Cauchy-Schwarz inequality and \eqref{eq:lem18_2n} we find that 
\begin{align*}
    \text{term I} &\leq \cst \left(\Exp\left[\norm{B_{m:n,\lambda}(\bx)]}_{\op}^{-4}\right]\Exp\left[\left(n^{-2}h^{-2d}\sum_{k = m+1}^{n}\norm{\bR_k(\bx)}^2\right)^2\right]\right)^{\frac{1}{2}}
    \\
    & \leq \cst\lambda_{\min}^{-2}\left(\Exp\left[\left(n^{-2}h^{-2d}\sum_{k = m+1}^{n}\norm{\bR_k(\bx)}^2\right)^2\right]\right)^{\frac{1}{2}}
    \\
    &\leq \cst n^{-2}h^{-2d}\bigg(\sum_{k=m+1}^{n}\Exp[\norm{\bR_{k}(\bx)}^4] + \sum_{j,k=m+1, j\ne k}^{n}\Exp[\norm{\bR_{j}(\bx)}^2]\Exp[\norm{\bR_{k}(\bx)}^2]\bigg)^{\frac{1}{2}} 
    \\
    &\leq \cst n^{-2}h^{-2d}( n h^{d} + n^{2}h^{2d})^{{1}/{2}} \le \cst n^{-\frac{2\beta}{2\beta+d}},
\end{align*}
where in the last line we have used Lemma \ref{elbound}(i) and the fact that $h=n^{-\frac{1}{2\beta+d}}$.

Next, arguing analogously to \eqref{eq:dlinnoe} we get
\begin{align*}
\text{term II}
&\leq 3\,\Exp\left[\norm{{B_{m:n,\lambda}(\bx)}^{-1}\left(\bC_{m:n}(\bx) - \Exp\left[\bC_{m:n}(\bx)\right]\right)}^2\right]
\\
&\phantom{\leq}+ 6\, \Exp\left[\norm{{B_{m:n,\lambda}(\bx)}^{-1}-\left(\Exp\left[{B_{m:n,\lambda}(\bx)}\right]\right)^{-1}}_{\op}^{2}\right]\Exp\left[\norm{\bC_{m:n}(\bx)}^{2}\right]
\\
&\leq 3\,\left(\Exp\left[\norm{{B_{m:n,\lambda}(\bx)}^{-1}}^{4}_{\op}\right]\Exp\left[\norm{\bC_{m:n}(\bx) - \Exp\left[\bC_{m:n}(\bx)\right]}^{4}\right]\right)^{\frac{1}{2}} 
\\
&\phantom{\leq}+ 6\, \Exp\left[\norm{{B_{m:n,\lambda}(\bx)}^{-1}-\left(\Exp\left[{B_{m:n,\lambda}(\bx)}\right]\right)^{-1}}_{\op}^{2}\right]
\left(\Exp\left[\norm{\bC_{m:n}(\bx)}^{4}\right]\right)^{\frac{1}{2}} .
\end{align*}
Applying here the bounds \eqref{eq:lem18_2n} --\eqref{eq:lem:boundCX-1n} and using the fact that $h = n^{-\frac{1}{2\beta+d}}$ we obtain 
\begin{align*}
    \text{term II} \leq \cst n^{-\frac{2\beta}{2\beta+d}}.
\end{align*}
The proof is completed by putting together the bounds on terms I and II.
\end{proof}

\subsection{Proof of Theorem \ref{lower_bound}}\label{proof_lower_bound}

 We first prove \eqref{lower_minimum_value}.
We apply the scheme of proving lower bounds for estimation of functionals described in Section 2.7.4 in \cite{Tsybakov09}. Moreover, we use its basic form when the problem is reduced to testing two simple hypotheses (that is, the mixture measure $\mu$ from Section 2.7.4 in \cite{Tsybakov09} is the Dirac measure). Without loss of generality, assume that $\com$ is a sufficiently large Euclidean ball centered at 0. The functional we are estimating is $F(f)=f^*=\min_{\bx\in\com}f(\bx)$. We choose the two hypotheses as the probability measures $P_1^{\otimes n}$ and $ P_2^{\otimes n}$, where $P_j$ stands for the distribution of a pair $(\bx_i,y_i)$ satisfying \eqref{mainmodel} with $f=f_j$, $j=1,2$. For $r>0$, $\delta>0$, we set
$$
f_1(\bx) =\alpha(1+\delta)\|\bx\|^{2} / 2, \quad f_2(\bx)=f_1(\bx)+r h_{n}^{\beta} \Phi\left(\frac{\bx-\bx^{(n)}}{h_{n}}\right),
$$
where $h_n = n^{-1/(2\beta+d)},$ $\bx^{(n)} = (h_n/8,0,\dots,0) \in \mathbb{R}^d$ and $\Phi(\bx) = \prod_{i=1}^d \Psi(x_i)$ with $$\Psi(t) = \int_{-\infty}^t \left(\eta(y+1/2)-\eta(y)\right)\drm y,$$
where $\eta(\cdot)$ is an infinitely many times differentiable function on $\mathbb{R}^{1}$ such that
$$
\eta(x) \geq 0, \quad \eta(x)= \begin{cases}0, & x \notin[0,1 / 2] \\ 1, & x \in[1 / 8,3 / 8]\end{cases}.
$$
{First, assume that $\alpha \geq n^{-(\beta-2)/(2\beta+d)}$.} It is shown in \cite{tsybakov1990passive} that
 if $r$ is small enough the functions $f_1$ and $f_2$ are $\alpha$-strongly convex and belong to $\mathcal{F}_{\beta}(L).$ Thus, $
      f_{j} \in \mathcal{F}_{\beta, \alpha}(L), j=1,2.
        $
        It is  also not hard to check that for the function $\eta_1(y)=\eta(y+1/2)-\eta(y)$ we have
        $$
        \eta_1\left(-\frac{r \Psi^{d-1}(0)h_n^{\beta-2}}{\alpha(1+\delta)}-\frac{1}{8}\right)=1,
        $$
{due to the fact that $r\alpha^{-1}(1+\delta)^{-1}h_n^{\beta-2}<1/4$, for $\alpha \geq n^{-(\beta-2)/(2\beta+d)}$ and sufficiently small values of $r$ and $\delta$.} Using this remark we get that
the minimizers $\bx^*_j = \argmin_{\bx\in \com} f_j(\bx)$ have the form
\[
\bx_1^*=(0,0,\dots,0) \quad \text{and} \quad  \bx^*_2 = \left(-\frac{r \Psi^{d-1}(0) h_{n}^{\beta-1}}{\alpha(1+\delta)}, 0, \ldots, 0\right).
\] 
The values of the functional $F$ on $f_1$ and $f_2$ are $F(f_1)=0$ and 
\begin{align*}
    F(f_2) & = f_2(\bx^*_2)\\
    & = \frac{r^2 \Psi^{2(d-1)}(0)}{2\alpha (1+\delta)}h_n^{2(\beta - 1)} + r \Psi^{d-1}(0) \Psi\left(-\frac{r \Psi^{d-1}(0)h_n^{\beta-2}}{\alpha(1+\delta)}-\frac{1}{8}\right)h_n^{\beta} \\
    & \ge \frac{r^2 \Psi^{2(d-1)}(0)}{2\alpha (1+\delta)}h_n^{2(\beta - 1)} + r \Psi^{d-1}(0) \Psi(-1/4) h_n^{\beta} \quad (\text{for } r \text{ small \ enough}) \\
    & \geq r \Psi^{d-1}(0) \Psi(-1/4) h_n^{\beta}.
\end{align*}
Here, $\Psi(0)=\int_{-\infty}^{\infty} \eta(y)\drm y >0$ and 
$\Psi(-1/4)=\int_{-\infty}^{1/4} \eta(y)\drm y >0$. 

Note that assumption (i) of Theorem 2.14 in \cite{Tsybakov09} is satisfied with $\beta_0=\beta_1=0$, $c=0$ and $s=r \Psi^{d-1}(0) \Psi(-1/4) h_n^{\beta}/2$. Therefore, by Theorem 2.15 (ii) in \cite{Tsybakov09}, \eqref{lower_minimum_value} will be proved if we show that 
\begin{equation}\label{helling}
\operatorname{H}^2\left(P_1^{\otimes n}, P_2^{\otimes n}\right) \le a<2,
\end{equation}
where $\operatorname{H}^2\left(P, Q\right)$ denotes the Hellinger distance between the probability measures $P$ and $Q$. 
Using assumption (\ref{noice_assumption}) we obtain
\begin{align*}
\operatorname{H}^2\left(P_1^{\otimes n}, P_2^{\otimes n}\right) &=2\left(1-\left(1-\frac{\operatorname{H}^2(P_1,P_2)}{2} \right)^n \right)\\
 \quad &\leq n\operatorname{H}^2(P_1,P_2)\quad  (\text{as} \,\, (1-x)^{n} \geq 1-x n, \ x\in [0,1])\\
& = n \int \left(\sqrt{p_{\xi}(y)} - \sqrt{p_{\xi}\left(y+\left(f_1(\bx)-f_2(\bx)\right)\right)} \right)^2 p(\bx)\drm\bx \drm y \\
 &\leq n I_{*} \int\left(f_1(\bx)-f_2(\bx)\right)^{2} p(\bx) \drm \bx \\
 & = n I_{*} r^{2} h_{n}^{2 \beta+d} \int \Phi^{2}(\bu) p\left(\bx^{(n)}+\bu h_{n}\right) \drm\bu  \\
& \leq p_{\max } I_{*} r^{2} \int \Phi^{2}(\bu) \drm\bu,  \quad \text{for} \,\,\, r \leq v_0,
\end{align*}
where $p_{\max }$ is the maximal value of the density $p(\cdot)$  of $\bx_i$.
Choosing $r \leq \sqrt{a /\left(p_{\max } I_{*} \int \Phi^{2}(\bu) \drm\bu\right)},$ with $a <2$ we obtain \eqref{helling}. This completes the proof of the lower bound \eqref{lower_minimum_value} for $\alpha\geq n^{-(\beta-2)/(2\beta+d)}$. If $0 < \alpha < \alpha_0 := n^{-(\beta-2)/(2\beta+d)}$ the same lower bound holds due to the nesting property of the classes $\mathcal{F}_{\alpha,\beta}(L)$. Indeed,  $\mathcal{F}_{\alpha_0,\beta}(L) \subset \mathcal{F}_{\alpha,\beta}(L)$ for $0 < \alpha < \alpha_0$. The proof of \eqref{lower_minimum_value} is now complete. 

In order to prove \eqref{lower_minimizer}, it suffices to use the same construction of two hypotheses as above, apply the Hellinger version of Theorem 2.2 from \cite{Tsybakov09} and notice that, for $\alpha\geq n^{-(\beta-2)/(2\beta+d)}$,
\begin{align}\label{eq:lower_B}
   \Vert \bx_1^* - \bx^*_2\Vert \geq \cst \alpha^{-1}h_n^{\beta-1}.
\end{align}
This proves the lower bound \eqref{lower_minimizer} for $\alpha\geq \alpha_0$, with the normalizing factor $\alpha n^{(\beta-1)/(2\beta+d)}$. Notably, if $\alpha = \alpha_0$ this factor is equal to $n^{1/(2\beta+d)}$. Using the inclusion $\mathcal{F}_{\alpha_0,\beta}(L) \subset \mathcal{F}_{\alpha,\beta}(L)$ valid for $0<\alpha <\alpha_0$ we conclude that the bound \eqref{lower_minimizer} with the normalizing factor $n^{1/(2\beta+d)}$ holds true for all such values of $\alpha$. This completes the proof of \eqref{lower_minimizer}.

Finally, the lower bound \eqref{lower_opt_error} follows immediately from \eqref{lower_minimizer} with $w(u)=u^2$ and the inequality $f(\hat\bx_n)-f^*\ge (\alpha/2)\norm{\hat\bx_n-\bx^*}^2$ granted by the $\alpha$-strong convexity of $f$.

\subsection{Proof of Theorem \ref{th:lower_bound_nonconvex}}
\label{sec:proof_lower_bound_nonconvex}

We apply again the scheme of proving lower bounds for estimation of functionals from Section 2.7.4 in \cite{Tsybakov09}. However, we use a different construction of the hypotheses. Without loss of generality, assume that $n\ge 2$ and that $\com$ contains the cube $[0,1]^d$. Define $h_n = (n/\log(n))^{-1/(2\beta+d)}$, $N=(1/h_n)^d$, and assume without loss of generality that $N$ is an integer. 
 For $r>0$, we set
$$
 f_j(\bx)=- r h_{n}^{\beta} \Phi\left(\frac{\bx-{\bf t}^{(j)}}{h_{n}}\right), \quad j=1,\dots,N,
$$
where  $\Phi(\bx) = \prod_{i=1}^d \Psi(x_i)$,
where $\Psi(\cdot)$ is an infinitely many times differentiable function on $\mathbb{R}$ taking positive values on its support $[-1/2,1/2]$,
and we denote by ${\bf t}^{(1)},\dots,{\bf t}^{(N)} $ the $N$ points of the equispaced grid on $[0,1]^d$ with step $h_n$ over each coordinate, such that the supports of all $f_j$'s are included in $[0,1]^d$ and are disjoint. It is not hard to check that for $r$ small enough all the functions $f_j$, $ j=1,\dots,N$, belong to $\mathcal{F}_{\beta}(L)$.

We consider the product probability measures $P_0^{\otimes n}$ and $ P_1^{\otimes n}, \dots P_N^{\otimes n}$, where $P_0$ stands for the distribution of a pair $(\bx_i,y_i)$ satisfying \eqref{mainmodel} with $f\equiv 0$, and $P_j$ stands for the distribution of $(\bx_i,y_i)$ satisfying \eqref{mainmodel} with $f=f_j$. Consider the mixture probability measure ${\mathbb P}_\mu=\frac1N \sum_{j=1}^{N} P_j^{\otimes n}$, where $\mu$ denotes the uniform distribution on $\{1,\dots,N\}$.

Note that, for each $j=1,\dots,N$, we have  $F(f_j)=-r h_{n}^{\beta}\Phi_{\max}$, where $F(f)=f^*=\min_{\bx\in\com}f(\bx)$, and  $\Phi_{\max}>0$ denotes the maximal value of function $\Phi(\cdot)$. Let $$\chi^2(P',P)=\int (\drm P'/\drm P)^2 \drm P -1$$ 
denote the chi-square divergence between two mutually absolutely continuous probability measures $P'$ and~$P$. We will use the following lemma, which is a special case of Theorem~2.15 in \cite{Tsybakov09}. 
\begin{lemma}\label{lem:lower_tsybakov_book}
	Assume that there exist $v>0, b>0$ such that $F(f_j) = - 2v$ for $j=1,\dots,N$ and $\chi^2({\mathbb P}_{\mu} ,P_0^{\otimes n}) \le b$, 
	Then
	\begin{equation*}
	\inf_{\hat{f}_n}\sup_{j=0,1,\dots,N} P_j^{\otimes n}\big(|\hat{f}_n-F(f_j)|\ge v\big) \ge \frac14\exp(-b),
	\end{equation*}
	where $\inf_{\hat{f}_n}$ denotes the infimum over all estimators.
\end{lemma}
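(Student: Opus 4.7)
The plan is to prove Lemma \ref{lem:lower_tsybakov_book} by the standard reduction of estimation to testing, applied to the two (composite) hypotheses $H_0: f \equiv 0$ and $H_1: f \in \{f_1,\dots,f_N\}$ mixed by $\mu$. The functional values differ by $2v$ (namely $0$ versus $-2v$), so the midpoint $-v$ provides a natural decision boundary.

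Concretely, given any estimator $\hat f_n$, I would define the test $\psi = \mathbb{1}\{\hat f_n \le -v\}$, voting for $H_0$ when $\psi=0$ and for $H_1$ when $\psi=1$. If $\psi=1$ under $P_0^{\otimes n}$, then $|\hat f_n - F(f_0)| = |\hat f_n| \ge v$; conversely, if $\psi=0$ under $P_j^{\otimes n}$ with $j\ge 1$, then $\hat f_n + 2v > v$, so $|\hat f_n - F(f_j)| > v$. These two implications give the pointwise bounds
\begin{equation*}
P_0^{\otimes n}(\psi=1) \le P_0^{\otimes n}\!\bigl(|\hat f_n - F(f_0)| \ge v\bigr),
\qquad
P_j^{\otimes n}(\psi=0) \le P_j^{\otimes n}\!\bigl(|\hat f_n - F(f_j)| \ge v\bigr),
\end{equation*}
and averaging the second over $j$ turns the right side into an upper bound by $\max_j P_j^{\otimes n}(|\hat f_n - F(f_j)| \ge v)$ for $\mathbb{P}_\mu(\psi=0)$. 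Combining via $\max(a,b) \ge (a+b)/2$ yields
\begin{equation*}
\sup_{j=0,\dots,N} P_j^{\otimes n}\!\bigl(|\hat f_n - F(f_j)| \ge v\bigr)
\;\ge\;
\tfrac{1}{2}\bigl(P_0^{\otimes n}(\psi=1) + \mathbb{P}_\mu(\psi=0)\bigr)
\;\ge\;
\tfrac{1}{2}\inf_\psi \bigl(P_0^{\otimes n}(\psi=1) + \mathbb{P}_\mu(\psi=0)\bigr).
\end{equation*}

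The remaining step is to bound the minimum sum of error probabilities for the two-point testing problem $P_0^{\otimes n}$ versus $\mathbb{P}_\mu$. The classical inequality (e.g., Theorem~2.2 of \cite{Tsybakov09}) gives
\begin{equation*}
\inf_\psi \bigl(P_0^{\otimes n}(\psi=1) + \mathbb{P}_\mu(\psi=0)\bigr) \;\ge\; \tfrac{1}{2}\exp\!\bigl(-K(\mathbb{P}_\mu,P_0^{\otimes n})\bigr),
\end{equation*}
where $K$ denotes the Kullback--Leibler divergence. To convert this to a chi-square statement, I would use Jensen's inequality in the form $K(Q,P) \le \log\!\bigl(1+\chi^2(Q,P)\bigr) \le \chi^2(Q,P)$, so that $K(\mathbb{P}_\mu,P_0^{\otimes n}) \le b$ by hypothesis. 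Substituting and taking $\hat f_n$ arbitrary yields the claimed $\tfrac{1}{4}\exp(-b)$ lower bound.

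The proof is essentially a cascade of standard inequalities and there is no real obstacle; the only points requiring attention are the correct choice of decision threshold (any point in $(-2v,0)$ works, but $-v$ is optimal because it makes the two implications symmetric), and the passage from KL to chi-square via the elementary bound $\log(1+x)\le x$, which is what makes the $\chi^2$ hypothesis of the lemma suffice in place of a KL hypothesis.
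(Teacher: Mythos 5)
Your proof is correct. The paper itself does not prove this lemma: it simply states it as a special case of Theorem 2.15 in Tsybakov (2009), so what you have done is supply the standard argument behind that citation. Your route — defining the test $\psi=\mathbb{1}\{\hat f_n\le -v\}$ (using the implicit fact $F(f_0)=0$ for $f\equiv 0$), reducing the sup over $j$ to the sum of the two error probabilities of testing $P_0^{\otimes n}$ against the mixture $\mathbb{P}_\mu$, bounding that sum from below by $\tfrac12\exp\bigl(-K(\mathbb{P}_\mu,P_0^{\otimes n})\bigr)$ via the Bretagnolle--Huber inequality, and then passing to chi-square through $K(Q,P)\le \log\bigl(1+\chi^2(Q,P)\bigr)\le \chi^2(Q,P)$ — is exactly the mechanism underlying the fuzzy-hypotheses lower bound in Section 2.7.4 of Tsybakov's book, and it reproduces the stated constant $\tfrac14\exp(-b)$. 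The only cosmetic remarks: the precise pointer should be to Lemma 2.6/Theorem 2.2(iii) of that book rather than "Theorem 2.2" generically, and one should note that $\mathbb{P}_\mu\ll P_0^{\otimes n}$ (which holds here since the noise is Gaussian and the design distribution is common to all hypotheses) so that the KL and chi-square quantities are well defined; neither point affects the validity of the argument.
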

 In our case, the first condition of this lemma is satisfied with $v=r h_{n}^{\beta}\Phi_{\max}/2$. We now check that the second condition 
$\chi^2({\mathbb P}_{\mu} ,P_0^{\otimes n}) \le b$ holds with some constant $b>0$ independent of $n$. Using a standard representation of the chi-square divergence of a Gaussian mixture  from the pure Gaussian noise measure (see, for example, Lemma 8 in \cite{Carpentier-etal2019}) we obtain
\begin{align*}
	\chi^2({\mathbb P}_{\mu} ,P_0^{\otimes n}) &= \frac1{N^2} \sum_{j,j'=1}^N 
	{\mathbf E} \exp\left(\frac{\sum_{i=1}^{n} f_j(\bx_i)f_{j'}(\bx_i)}{\sigma^2}\right) -1
	\\
	&= \frac1{N^2} \sum_{j,j'=1}^N 
	{\mathbf E} \exp\left(\frac{\sum_{i=1}^{n} f_j(\bx_i)f_{j'}(\bx_i)}{\sigma^2}\right) -1 
	\\
	&= \frac1{N^2} \sum_{j=1}^N 
	{\mathbf E} \exp\left(\frac{\sum_{i=1}^{n} f_j^2(\bx_i) }{\sigma^2}\right) + \frac{N(N-1)}{N^2}-1 
	\\
	&\le \frac1{N^2} \sum_{j=1}^N 
	{\mathbf E} \exp\left(\frac{\sum_{i=1}^{n} f_j^2(\bx_i) }{\sigma^2}\right) 
	\\
	&= \frac1{N^2} \sum_{j=1}^N 
\left[	{\mathbf E} \exp\left(\frac{f_j^2(\bx_1) }{\sigma^2}\right) \right]^n,
\end{align*}
where the equality in the third line is due to the fact that if $j\ne j'$ then $f_j$ and $f_{j'}$ have disjoint supports and thus $f_j(\bx_i)f_{j'}(\bx_i)=0$. Note that $\max_{\bx\in \mathbb{R}^d} f_j^2(\bx)\le r^2 \Phi_{\max}^2,$ for all $j=1,\dots,N$.
Choose $r$ such that $r\le \sigma/\Phi_{\max}$. Then $\frac{f_j^2(\bx_1) }{\sigma^2}\le 1$, and using the elementary inequality $\exp(u)\le 1+2u, u\in [0,1],$ we obtain that
$\exp\left(\frac{f_j^2(\bx_1) }{\sigma^2}\right)\le 1 + \frac{2 f_j^2(\bx_1) }{\sigma^2}$ for all $j=1,\dots,N$. Substituting this bound in the last display and noticing that $	{\mathbf E} (f_j^2(\bx_1))=\int f_j^2(\bx) p(\bx) \drm \bx \le p_{\max}r^2h_n^{2\beta+d}\int \Phi^2(\bx) \drm \bx= c_*\frac{\log n}{n}$, where $c_*= p_{\max}r^2\int \Phi^2(\bx)\drm \bx$, we obtain:
$$
\chi^2({\mathbb P}_{\mu} ,P_0^{\otimes n})\le \frac1{N} 
\left[1 +\frac{2 	{\mathbf E} (f_j^2(\bx_1) )}{\sigma^2} \right]^n \le \frac1{N} 
\left[1 +\frac{2 c_*	\log n}{\sigma^2 n} \right]^n \le \frac1{N} \exp\left(\frac{2 c_*	\log n}{\sigma^2}\right)= \frac{n^{c_0}}{N},
$$
where $c_0=2c_*/\sigma^2=2p_{\max}r^2\int \Phi^2(\bx)\drm \bx/\sigma^2$. Since $N=(n/\log n)^{\frac{d}{2\beta+d}}$ we finally get
	$$
	\chi^2({\mathbb P}_{\mu} ,P_0^{\otimes n})\le n^{c_0-\frac{d}{2\beta+d}} (\log n)^{\frac{d}{2\beta+d}}.
	$$
By choosing $r$ small enough to have $c_0\le \frac{d}{2(2\beta+d)}$ we obtain that $\chi^2({\mathbb P}_{\mu} ,P_0^{\otimes n})\le \left(\frac{\log n}{\sqrt{n}}\right)^{\frac{d}{2\beta+d}}\le \left(\frac{\log 2}{\sqrt{2}}\right)^{\frac{d}{2\beta+d}}: = b$. Thus, the second condition of Lemma \ref{lem:lower_tsybakov_book} holds if $r$ is chosen as a small enough constant. Notice that, in Lemma \ref{lem:lower_tsybakov_book}, the rate $v$ is of the desired order
$(n/\log n)^{-\frac{\beta}{2\beta+d}}$. The result of the theorem now follows from Lemma \ref{lem:lower_tsybakov_book} and the standard argument to obtain the lower bounds, see Section 2.7.4 in \cite{Tsybakov09}. 

\vskip 0.2in
\bibliography{sample}

\begin{thebibliography}{43}
\providecommand{\natexlab}[1]{#1}
\providecommand{\url}[1]{\texttt{#1}}
\expandafter\ifx\csname urlstyle\endcsname\relax
  \providecommand{\doi}[1]{doi: #1}\else
  \providecommand{\doi}{doi: \begingroup \urlstyle{rm}\Url}\fi

\bibitem[Akhavan et~al.(2020)Akhavan, Pontil, and Tsybakov]{akhavan2020}
Arya Akhavan, Massimiliano Pontil, and Alexandre~B. Tsybakov.
\newblock Exploiting higher order smoothness in derivative-free optimization and continuous bandits.
\newblock \emph{Advances in Neural Information Processing Systems}, 33, 2020.

\bibitem[Akhavan et~al.(2021)Akhavan, Pontil, and Tsybakov]{akhavan2021distributed}
Arya Akhavan, Massimiliano Pontil, and Alexandre~B. Tsybakov.
\newblock Distributed zero-order optimization under adversarial noise.
\newblock \emph{Advances in Neural Information Processing Systems}, 34, 2021.

\bibitem[Akhavan et~al.(2023)Akhavan, Chzhen, Pontil, and Tsybakov]{akhavan2023}
Arya Akhavan, Evgenii Chzhen, Massimiliano Pontil, and Alexandre~B. Tsybakov.
\newblock Gradient-free optimization of highly smooth functions: improved analysis and a new algorithm.
\newblock \emph{arXiv:2306.02159}, 2023.

\bibitem[Arias-Castro et~al.(2022)Arias-Castro, Qiao, and Zheng]{arias2022estimation}
Ery Arias-Castro, Wanli Qiao, and Lin Zheng.
\newblock Estimation of the global mode of a density: Minimaxity, adaptation, and computational complexity.
\newblock \emph{Electronic Journal of Statistics}, 16\penalty0 (1):\penalty0 2774--2795, 2022.

\bibitem[Bach and Perchet(2016)]{BP2016}
Francis Bach and Vianney Perchet.
\newblock Highly-smooth zero-th order online optimization.
\newblock In \emph{Proc. 29th Annual Conference on Learning Theory}, pages 1--27, 2016.

\bibitem[Belitser et~al.(2012)Belitser, Ghosal, and van Zanten]{belitser}
Eduard Belitser, Subhashis Ghosal, and Harry van Zanten.
\newblock Optimal two-stage procedures for estimating location and size of the maximum of a multivariate regression function.
\newblock \emph{Ann. Statist.}, 40\penalty0 (6):\penalty0 2850--2876, 2012.

\bibitem[Belitser et~al.(2021)Belitser, Ghosal, and van Zanten]{belitser-correction}
Eduard Belitser, Subhashis Ghosal, and Harry van Zanten.
\newblock Correction note: {"}{O}ptimal two-stage procedures for estimating location and size of the maximum of a multivariate regression function{"}.
\newblock \emph{Ann. Statist.}, 49\penalty0 (1):\penalty0 612--613, 2021.

\bibitem[Blum(1954)]{blum1954multidimensional}
Julius~R Blum.
\newblock Multidimensional stochastic approximation methods.
\newblock \emph{The Annals of Mathematical Statistics}, 25\penalty0 (4):\penalty0 737--744, 1954.

\bibitem[Carpentier et~al.(2019)Carpentier, Collier, Comminges, Tsybakov, and Wang]{Carpentier-etal2019}
Alexandra Carpentier, Olivier Collier, La{\"e}titia Comminges, Alexandre~B. Tsybakov, and Yu~Wang.
\newblock Minimax rate of testing in sparse linear regression.
\newblock \emph{Automation and Remote Control}, 80:\penalty0 1817--1834, 2019.

\bibitem[Chen(1988)]{chen1988lower}
Hung Chen.
\newblock Lower rate of convergence for locating a maximum of a function.
\newblock \emph{The Annals of Statistics}, 16\penalty0 (3):\penalty0 1330--1334, 1988.

\bibitem[Chernoff(1964)]{chernoff1964estimation}
Herman Chernoff.
\newblock Estimation of the mode.
\newblock \emph{Annals of the Institute of Statistical Mathematics}, 16\penalty0 (1):\penalty0 31--41, 1964.

\bibitem[Dalenius(1965)]{dalenius1965mode}
Tore Dalenius.
\newblock The mode--a neglected statistical parameter.
\newblock \emph{Journal of the Royal Statistical Society. Series A (General)}, 128\penalty0 (1):\penalty0 110--117, 1965.

\bibitem[Dasgupta and Kpotufe(2014)]{dasgupta2014optimal}
Sanjoy Dasgupta and Samory Kpotufe.
\newblock Optimal rates for k-nn density and mode estimation.
\newblock \emph{Advances in Neural Information Processing Systems}, 27, 2014.

\bibitem[Dippon(2003)]{dippon2003accelerated}
J{\"u}rgen Dippon.
\newblock Accelerated randomized stochastic optimization.
\newblock \emph{The Annals of Statistics}, 31\penalty0 (4):\penalty0 1260--1281, 2003.

\bibitem[Dupa{\v{c}}(1957)]{dupavc1957kiefer}
V{\'a}clav Dupa{\v{c}}.
\newblock O {K}iefer-{W}olfowitzov{\v{e}} aproxima{\v{c}}n{\'\i} method{\v{e}}.
\newblock \emph{{\v{C}}asopis pro p{\v{e}}stov{\'a}n{\'\i} matematiky}, 82\penalty0 (1):\penalty0 47--75, 1957.

\bibitem[Fabian(1967)]{fabian1967stochastic}
Vaclav Fabian.
\newblock Stochastic approximation of minima with improved asymptotic speed.
\newblock \emph{The Annals of Mathematical Statistics}, 38\penalty0 (1):\penalty0 191--200, 1967.

\bibitem[Facer and M{\"u}ller(2003)]{facer2003nonparametric}
Matthew~R Facer and Hans-Georg M{\"u}ller.
\newblock Nonparametric estimation of the location of a maximum in a response surface.
\newblock \emph{Journal of Multivariate Analysis}, 87\penalty0 (1):\penalty0 191--217, 2003.

\bibitem[Grenander(1965)]{grenander1965some}
Ulf Grenander.
\newblock Some direct estimates of the mode.
\newblock \emph{The Annals of Mathematical Statistics}, 36\penalty0 (1):\penalty0 131--138, 1965.

\bibitem[H\"ardle and Nixdorf(1987)]{hardle1987nonparametric}
Wolfgang H\"ardle and Rainer Nixdorf.
\newblock Nonparametric sequential estimation of zeros and extrema of regression functions.
\newblock \emph{IEEE transactions on information theory}, 33\penalty0 (3):\penalty0 367--372, 1987.

\bibitem[Ibragimov and Has'minskii(1981)]{IbrHasm-book}
Ildar~A. Ibragimov and Rafail~Z. Has'minskii.
\newblock \emph{Statistical Estimation, Asymptotic Theory}.
\newblock Springer, New York, 1981.

\bibitem[Ibragimov and Khas'~minskii(1982)]{Ibra}
Ildar~A. Ibragimov and Rafail~Z. Khas'~minskii.
\newblock Estimation of the maximum value of a signal in gaussian white noise.
\newblock \emph{Mat. Zametki}, 32\penalty0 (4):\penalty0 746--750, 1982.

\bibitem[Khas'minskii(1979)]{hasminskii1979lower}
Rafail~Z. Khas'minskii.
\newblock Lower bound for the risks of nonparametric estimates of the mode.
\newblock \emph{Contributions to statistics}, 23\penalty0 (4):\penalty0 91--97, 1979.

\bibitem[Kiefer and Wolfowitz(1952)]{kiefer1952stochastic}
Jack Kiefer and Jacob Wolfowitz.
\newblock Stochastic estimation of the maximum of a regression function.
\newblock \emph{The Annals of Mathematical Statistics}, 23\penalty0 (3):\penalty0 462--466, 1952.

\bibitem[Klemel{\"a}(2005)]{klemela2005adaptive}
Jussi Klemel{\"a}.
\newblock Adaptive estimation of the mode of a multivariate density.
\newblock \emph{Journal of Nonparametric Statistics}, 17\penalty0 (1):\penalty0 83--105, 2005.

\bibitem[Krishnamurthy and Yin(2022)]{passive2022}
Vikram Krishnamurthy and George Yin.
\newblock Multikernel passive stochastic gradient algorithms and transfer learning.
\newblock \emph{IEEE Trans. Automat. Control}, 67:\penalty0 1792--1805, 2022.

\bibitem[Lepski(1991)]{lepskii1991problem}
Oleg~V. Lepski.
\newblock On a problem of adaptive estimation in {G}aussian white noise.
\newblock \emph{Theory of Probability and its Applications}, 35\penalty0 (3):\penalty0 454--466, 1991.

\bibitem[Lepski(1993)]{Lepski1993}
Oleg~V. Lepski.
\newblock Estimation of the maximum of a nonparametric signal up to a constant.
\newblock \emph{Theory of Probability and its Applications}, 38\penalty0 (1):\penalty0 152--158, 1993.

\bibitem[Mokkadem and Pelletier(2007)]{mokkadem2007companion}
Abdelkader Mokkadem and Mariane Pelletier.
\newblock A companion for the {K}iefer--{W}olfowitz--{B}lum stochastic approximation algorithm.
\newblock \emph{The Annals of Statistics}, 35\penalty0 (4):\penalty0 1749--1772, 2007.

\bibitem[M{\"u}ller(1985)]{muller1985kernel}
Hans-Georg M{\"u}ller.
\newblock Kernel estimators of zeros and of location and size of extrema of regression functions.
\newblock \emph{Scandinavian journal of statistics}, 12\penalty0 (3):\penalty0 221--232, 1985.

\bibitem[M{\"u}ller(1989)]{muller1989adaptive}
Hans-Georg M{\"u}ller.
\newblock Adaptive nonparametric peak estimation.
\newblock \emph{The Annals of Statistics}, 17\penalty0 (3):\penalty0 1053--1069, 1989.

\bibitem[Nazin et~al.(1989)Nazin, Polyak, and Tsybakov]{nazin-polyak-tsybakov-1989}
Alexander~V. Nazin, Boris~T. Polyak, and Alexandre~B. Tsybakov.
\newblock Passive stochastic approximation.
\newblock \emph{Automation and Remote Control}, 50:\penalty0 1563--1569, 1989.

\bibitem[Nazin et~al.(1992)Nazin, Polyak, and Tsybakov]{nazin-polyak-tsybakov}
Alexander~V. Nazin, Boris~T. Polyak, and Alexandre~B. Tsybakov.
\newblock Optimal and robust algorithms of passive stochastic approximation.
\newblock \emph{IEEE Transactions on Information Theory}, 38\penalty0 (5):\penalty0 1577--1583, 1992.

\bibitem[Parzen(1962)]{parzen1962estimation}
Emanuel Parzen.
\newblock On estimation of a probability density function and mode.
\newblock \emph{The Annals of Mathematical Statistics}, 33\penalty0 (3):\penalty0 1065--1076, 1962.

\bibitem[Polyak and Tsybakov(1990)]{PT90}
Boris~T. Polyak and Alexandre~B. Tsybakov.
\newblock Optimal order of accuracy of search algorithms in stochastic optimization.
\newblock \emph{Problems of Information Transmission}, 26\penalty0 (2):\penalty0 45--53, 1990.

\bibitem[Stone(1980)]{stone1980}
Charles~J. Stone.
\newblock Optimal rates of convergence for nonparametric estimators.
\newblock \emph{The Annals of Statistics}, 8\penalty0 (6):\penalty0 1348--1360, 1980.

\bibitem[Stone(1982)]{stone1982optimal}
Charles~J Stone.
\newblock Optimal global rates of convergence for nonparametric regression.
\newblock \emph{The Annals of Statistics}, 10\penalty0 (4):\penalty0 1040--1053, 1982.

\bibitem[Tsybakov(1986)]{Tsy86}
Alexandre~B. Tsybakov.
\newblock Robust reconstruction of functions by the local-approximation method.
\newblock \emph{Problems of Information Transmission}, 22\penalty0 (2):\penalty0 69--84, 1986.

\bibitem[Tsybakov(1990{\natexlab{a}})]{tsybakov1990mode}
Alexandre~B. Tsybakov.
\newblock Recursive estimation of the mode of a multivariate distribution.
\newblock \emph{Problems of Information Transmission}, 26\penalty0 (1):\penalty0 31--37, 1990{\natexlab{a}}.

\bibitem[Tsybakov(1990{\natexlab{b}})]{tsybakov1990passive}
Alexandre~B. Tsybakov.
\newblock Locally-polynomial algorithms of passive stochastic approximation.
\newblock \emph{Problems of Control and Information Theory}, 19\penalty0 (3):\penalty0 181--195, 1990{\natexlab{b}}.

\bibitem[Tsybakov(2009)]{Tsybakov09}
Alexandre~B. Tsybakov.
\newblock \emph{Introduction to Nonparametric Estimation}.
\newblock Springer, New York, 2009.

\bibitem[Venter(1967)]{venter1967estimation}
Johannes~Hendrik Venter.
\newblock On estimation of the mode.
\newblock \emph{The Annals of Mathematical Statistics}, 38\penalty0 (5):\penalty0 1446--1455, 1967.

\bibitem[Wang et~al.(2018)Wang, Balakrishnan, and Singh]{wang2018optimization}
Yining Wang, Sivaraman Balakrishnan, and Aarti Singh.
\newblock Optimization of smooth functions with noisy observations: Local minimax rates.
\newblock \emph{Advances in Neural Information Processing Systems}, 31, 2018.

\bibitem[Yoo and Ghosal(2019)]{yoo2019bayesian}
William~Weimin Yoo and Subhashis Ghosal.
\newblock Bayesian mode and maximum estimation and accelerated rates of contraction.
\newblock \emph{Bernoulli}, 25\penalty0 (3):\penalty0 2330--2358, 2019.

\end{thebibliography}

\appendix

\section{Auxiliary lemmas}\label{app}
Recall that $\com' = \{\bx + \by: \bx\in \com\quad \text{and}\quad \Vert\by\Vert \le 1 \} \supseteq \{\bx + \by: \bx\in \com\quad \text{and}\quad \by\in \Supp(K)\}$.
\begin{lemma}\label{elbound}
For any $q \geq 1$ let 
\begin{align*}
    \nu_{q} = \int_{\mathbb{R}^d}\norm{\bU(\bu)K(\bu)}^{q}\drm\bu 
\end{align*}
and $p_{\max} = \max_{\by \in \com'}p(\by)$. Under Assumptions \ref{ass:kernel} and \ref{distass} for any $\bx\in \com$, $k\in[n]$ and $i\in [k]$ we have
\begin{itemize}
        \item[(i)] $\sup_{\bx\in\com}\Exp\left[\norm{\bR_{i,k}(\bx)}^{q}\right] \leq p_{\max}\nu_{q} h_k^{d}  \ $,  and  $ \ \sup_{\bx\in\com}\Exp\left[\norm{\bR_{k}(\bx)}^{q}\right] \leq p_{\max}\nu_{q} h_{m:n}^{d}$.
        \item[(ii)] There exists a constant $\lambda_{\min}>0$ such that 
        $$\displaystyle{\inf_{\bx\in\com}}\lambda_{\min}\left(E_{k}(\bx)\right)\geq \lambda_{\min} \quad \text{and} \quad \displaystyle{\inf_{\bx\in\com}}\lambda_{\min}\left(E_{m:n}(\bx)\right) \geq \lambda_{\min}.$$
\end{itemize}
\end{lemma}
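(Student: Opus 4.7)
Both parts reduce to a single change of variables $\bu = (\by-\bx)/h_k$ in the integral against the density of $\bx_i$, after which Assumptions \ref{ass:kernel} and \ref{distass} do the rest. Note first that $h_k\le 1$ for every $k\ge 1$ (since $h_k^{2\beta+d}=\log(k+1)/k\le \log 2$) and likewise $h_{m:n}=n^{-1/(2\beta+d)}\le 1$; combined with $\Supp(K)$ being contained in the unit ball, this ensures that every point $\bx+h_k\bu$ arising in the integrals remains in $\com'$, where we control the density $p$ on both sides by Assumption \ref{distass}.

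For part (i), writing the expectation and performing the substitution gives, for any $\bx\in\com$,
\begin{align*}
\Exp\bigl[\|\bR_{i,k}(\bx)\|^q\bigr]
&= \int_{\mathbb{R}^d} \left\|\bU\!\left(\tfrac{\by-\bx}{h_k}\right) K\!\left(\tfrac{\by-\bx}{h_k}\right)\right\|^q p(\by)\,\drm\by \\
&= h_k^d \int_{\mathbb{R}^d} \|\bU(\bu) K(\bu)\|^q\, p(\bx+h_k\bu)\,\drm\bu
\;\le\; p_{\max}\,\nu_q\,h_k^d,
\end{align*}
where in the last step the integrand is supported on $\|\bu\|\le 1$ so that $\bx+h_k\bu\in\com'$ and $p\le p_{\max}$. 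Repeating the identical chain with $h_{m:n}$ in place of $h_k$ proves the bound for $\bR_k(\bx)$, closing part (i).

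For part (ii), the same substitution (together with the i.i.d.\ assumption on the $\bx_i$) gives
$$E_k(\bx) \;=\; \int_{\|\bu\|\le 1} \bU(\bu)\bU^{\top}(\bu) K(\bu)\, p(\bx+h_k\bu)\,\drm\bu \;\succeq\; p_{\min}\, M,$$
where $M := \int \bU(\bu)\bU^{\top}(\bu) K(\bu)\,\drm\bu$, and analogously for $E_{m:n}(\bx)$. Combined with the min-max characterization of eigenvalues, the lemma will follow once we establish that $M$ is strictly positive definite, because then we can simply set $\lambda_{\min}:=p_{\min}\lambda_{\min}(M)>0$, a constant depending only on $p_{\min}$, $K$, $d$ and $\beta$, which is uniform in $\bx\in\com$ and in $k$.

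\textbf{Main obstacle.} The only delicate step is the strict positive definiteness of $M$, which I would argue by a polynomial-identity argument. For any $\bv\in\mathbb{R}^S$,
$$\bv^{\top} M\bv = \int_{\mathbb{R}^d} (\bv^{\top}\bU(\bu))^2\, K(\bu)\,\drm\bu \;\ge\; 0.$$
If this vanishes, then $\bv^{\top}\bU(\bu)=0$ almost everywhere on $\{K>0\}$. Since $K$ is Lipschitz (hence continuous) and $\int K=1$, the set $\{K>0\}$ is open and non-empty, so it contains a Euclidean ball. But $\bu\mapsto\bv^{\top}\bU(\bu)$ is a polynomial of total degree $\le\ell$, and a polynomial vanishing on a non-empty open subset of $\mathbb{R}^d$ vanishes identically. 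Since the components $\bu^{\bm}/\bm!$, $|\bm|\le\ell$, of $\bU(\bu)$ form a basis of the space of polynomials of degree $\le\ell$, this forces $\bv=0$, which completes the proof.
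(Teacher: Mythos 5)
Your proof of part (i) is essentially identical to the paper's: the same change of variables $\bu=(\by-\bx)/h_k$, the same domination by $p_{\max}\nu_q$. You do one extra bit of care, explicitly checking $h_k\le 1$ and $h_{m:n}\le 1$ so that $\bx+h_k\bu\in\com'$; the paper leaves this implicit, but it is a real hypothesis needed to invoke Assumption \ref{distass}, so noting it is worthwhile.

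For part (ii), you and the paper both reduce to the lower bound $E_k(\bx)\succeq p_{\min} M$ with $M=\int \bU\bU^{\top}K$, so that the whole burden becomes $\lambda_{\min}(M)>0$. Here your route diverges: the paper disposes of this by citing Lemma 1 of \cite{Tsy86}, while you give a self-contained argument. Your argument is correct: if $\bv^{\top}M\bv=0$, then $(\bv^{\top}\bU(\bu))^2 K(\bu)=0$ a.e., hence the polynomial $\bv^{\top}\bU$ vanishes a.e.\ on the open nonempty set $\{K>0\}$ (open by continuity of $K$, nonempty since $\int K=1$), hence identically since the zero set of a nonzero polynomial is Lebesgue-null, and then linear independence of the monomials $\bu^{\bm}/\bm!$, $|\bm|\le\ell$, forces $\bv=0$. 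The trade-off is transparency versus brevity: your version is elementary, self-contained, and makes explicit that only the Lipschitz (hence continuity) and normalization properties of $K$ are used, at the cost of a paragraph; the paper's citation is shorter but opaque. Both are valid.

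One tiny remark: you do not actually need the ``almost everywhere'' reduction. Since $(\bv^{\top}\bU)^2 K$ is continuous and nonnegative with zero integral, it vanishes everywhere on $\{K>0\}$, which lets you skip the measure-zero argument; but the a.e.\ version you give is also fine because a polynomial vanishing a.e.\ on an open ball vanishes on it.
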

\begin{proof}
We have
 \begin{align*}
    h_k^{-d}\sup_{\bx\in\com}\Exp\left[\norm{\bR_{i,k}(\bx)}^{q}\right] 
    &= h_k^{-d}\sup_{\bx\in\com}\int_{\mathbb{R}^d}\norm{\bU\left(\frac{\by - \bx}{h_k}\right)K\left(\frac{\by - \bx}{h_k}\right)}^{q}p(\by)\drm \by
    \\&\leq \int_{\mathbb{R}^d}\norm{\bU(\bu)K(\bu)}^{q}\sup_{\bx\in\com}p(\bx + h_k\bu)\drm\bu\leq p_{\max}\nu_{q}.
\end{align*}
The bound for $\sup_{\bx\in\com}\Exp\left[\norm{\bR_{k}(\bx)}^{q}\right]$ is 
proved exactly in the same way. 
To prove (ii) note that
\begin{align*}
    E_{k}(\bx) = \Exp\left[B_{k}(\bx)\right] 
    &= h_k^{-d}\Exp\left[\bU\left(\frac{\bx_1 - \bx}{h_k}\right)\bU^{\top}\left(\frac{\bx_1 - \bx}{h_k}\right)K\left(\frac{\bx_1 - \bx}{h_k}\right)\right]
    \\&= \int_{\mathbb{R}^d}\bU\left(\bu\right)\bU^{\top}\left(\bu\right)K\left(\bu\right)p(\bx+h_k\bu)\drm \bu
\end{align*}
and thus $\inf_{\bx\in\com}\lambda_{\min}\left(E_{k}(\bx)\right) \geq p_{\min}\lambda_{\min}\left(H\right)$, where $H = \int_{\mathbb{R}^d}\bU\left(\bu\right)\bU^{\top}\left(\bu\right)K\left(\bu\right)\drm \bu$. Arguing as in \cite[Lemma 1]{Tsy86} we obtain that $\lambda_{\min}\left(H\right)>0$. This gives the desired bound for $\inf_{\bx\in\com}\lambda_{\min}\left(E_{k}(\bx)\right)$ with $\lambda_{\min} = p_{\min}\lambda_{\min}\left(H\right)$. The bound $\displaystyle{\inf_{\bx\in\com}}\lambda_{\min}\left(E_{m:n}(\bx)\right)\geq \lambda_{\min}$ is proved exactly in the same way. \end{proof}
\begin{lemma}\label{boundB}
Let Assumptions \ref{ass:kernel} and  \ref{distass} hold. Then there exists a constant $\cst>0$ such that for $kh_k^d\ge 1$ we have
\begin{align}\label{eq:lem18_1}
    \sup_{\bx\in\com}\Exp\left[\norm{B_{k,\lambda}(\bx) - \Exp\left[B_{k,\lambda}(\bx)\right]}^{4}_{\op}\right] \leq \cst h_k^{-2d}k^{-2}.  
\end{align}
Furthermore, for  $kh_k^d \geq \lambda^{-2}$ we have
\begin{align}\label{eq:lem18_2}
\sup_{\bx\in\com}\Exp\left[\norm{{B_{k,\lambda}(\bx)}^{-1}}_{\op}^{4}\right] \leq \cst \lambda_{\min}^{-4}
\end{align}
and
\begin{align}\label{eq:lem18_3}
\sup_{\bx\in\com}\Exp\left[\norm{B_{k,\lambda}(\bx)^{-1} - (\Exp[B_{k,\lambda}(\bx)])^{-1}}_{\op}^2\right] \leq \cst h_k^{-d}k^{-1}.
\end{align}
\end{lemma}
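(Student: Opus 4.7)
\textbf{Proof proposal for Lemma~\ref{boundB}.}

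My plan is to prove the three inequalities in order, since \eqref{eq:lem18_1} will be used to prove \eqref{eq:lem18_2}, and both are needed for \eqref{eq:lem18_3}.

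For \eqref{eq:lem18_1}, I would first note that the regularization is deterministic, so $B_{k,\lambda}(\bx)-\Exp[B_{k,\lambda}(\bx)]=B_k(\bx)-E_k(\bx)$. The matrix has fixed size $S\times S$ (depending only on $d,\beta$), so I can bound the operator norm by (a constant times) the maximum of the absolute values of its entries, and thus the problem reduces to controlling the fourth moment of an entry
\[
\zeta(\bx)\;=\;\frac{1}{kh_k^{d}}\sum_{i=1}^{k}\bigl(Z_i(\bx)-\Exp Z_i(\bx)\bigr),
\]
where each $Z_i(\bx)$ is a bounded function of $\bx_i$ (kernel times two coordinates of $\bU((\bx_i-\bx)/h_k)$). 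Because $K$ is supported in the unit ball and $\bU$ is bounded there, $|Z_i(\bx)|\le \cst$ uniformly, while a change of variable as in Lemma~\ref{elbound}(i) gives $\Exp Z_i^{2}(\bx)\le \cst h_k^{d}$ and $\Exp|Z_i(\bx)|^{4}\le \cst h_k^{d}$. Rosenthal's inequality then yields
\[
\Exp|\zeta(\bx)|^{4}\;\le\;\frac{\cst}{(kh_k^{d})^{4}}\Bigl(k\,\Exp|Z_i|^{4}+\bigl(k\Var Z_i\bigr)^{2}\Bigr)\;\le\;\cst\Bigl(\frac{1}{k^{3}h_k^{3d}}+\frac{1}{k^{2}h_k^{2d}}\Bigr),
\]
and under $kh_k^{d}\ge 1$ the second term dominates, giving \eqref{eq:lem18_1} uniformly in $\bx\in\com$.

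For \eqref{eq:lem18_2}, I would split according to the concentration event
\[
\mathcal{A}(\bx)\;=\;\bigl\{\|B_k(\bx)-E_k(\bx)\|_{\op}\le \lambda_{\min}/2\bigr\}.
\]
On $\mathcal{A}(\bx)$, Lemma~\ref{elbound}(ii) and Weyl's inequality give $\lambda_{\min}(B_{k,\lambda}(\bx))\ge \lambda_{\min}/2$, whence $\|B_{k,\lambda}(\bx)^{-1}\|_{\op}\le 2/\lambda_{\min}$. On the complement, I would use the crude deterministic bound $\|B_{k,\lambda}(\bx)^{-1}\|_{\op}\le 1/\lambda$ (valid because $B_k(\bx)\succeq 0$) together with Markov's inequality and \eqref{eq:lem18_1}:
\[
\mathbb{P}(\mathcal{A}(\bx)^{c})\;\le\;\frac{16}{\lambda_{\min}^{4}}\,\Exp\|B_k(\bx)-E_k(\bx)\|_{\op}^{4}\;\le\;\frac{\cst}{\lambda_{\min}^{4}\,k^{2}h_k^{2d}}.
\]
Combining,
\[
\Exp\|B_{k,\lambda}(\bx)^{-1}\|_{\op}^{4}\;\le\;\frac{16}{\lambda_{\min}^{4}}+\frac{1}{\lambda^{4}}\cdot\frac{\cst}{\lambda_{\min}^{4}\,k^{2}h_k^{2d}},
\]
and the assumption $kh_k^{d}\ge \lambda^{-2}$, i.e.\ $\lambda^{4}k^{2}h_k^{2d}\ge 1$, makes the second term $\cst/\lambda_{\min}^{4}$, yielding \eqref{eq:lem18_2}. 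This peeling step is the main subtlety, since without the constraint $kh_k^{d}\ge \lambda^{-2}$ the bad-event contribution would blow up.

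For \eqref{eq:lem18_3}, I would apply the resolvent identity
\[
B_{k,\lambda}(\bx)^{-1}-(\Exp[B_{k,\lambda}(\bx)])^{-1}\;=\;B_{k,\lambda}(\bx)^{-1}\bigl(E_k(\bx)-B_k(\bx)\bigr)(E_k(\bx)+\lambda\mathbf{I})^{-1},
\]
submultiplicativity of $\|\cdot\|_{\op}$, and the bound $\|(E_k(\bx)+\lambda\mathbf{I})^{-1}\|_{\op}\le 1/\lambda_{\min}$ from Lemma~\ref{elbound}(ii). Then Cauchy--Schwarz combined with \eqref{eq:lem18_1} and \eqref{eq:lem18_2} gives
\[
\Exp\|B_{k,\lambda}(\bx)^{-1}-(\Exp[B_{k,\lambda}(\bx)])^{-1}\|_{\op}^{2}\;\le\;\frac{1}{\lambda_{\min}^{2}}\bigl(\Exp\|B_{k,\lambda}(\bx)^{-1}\|_{\op}^{4}\bigr)^{1/2}\bigl(\Exp\|B_k(\bx)-E_k(\bx)\|_{\op}^{4}\bigr)^{1/2}\;\le\;\cst\, h_k^{-d}k^{-1},
\]
uniformly in $\bx\in\com$, which is \eqref{eq:lem18_3}. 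All bounds being uniform in $\bx$ only requires tracking that the constants produced by Lemma~\ref{elbound} and the Rosenthal step depend on $\bx$ only through the uniform bounds $p_{\max},p_{\min}$ and constants involving $K$ and $\bU$.
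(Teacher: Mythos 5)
Your proof is correct, and it differs from the paper's in two places, both of them genuine but benign variations. For \eqref{eq:lem18_1}, you invoke Rosenthal's inequality to pass directly from the second and fourth moments of the summands to the fourth moment of the average, whereas the paper bounds the tail via Bernstein's inequality (after reducing to entries through the $\ell_1$-norm of the matrix) and then integrates $4\int_0^\infty t^3\,\Prob(\cdot>t)\,\drm t$. Rosenthal is more economical when only a moment bound is needed; the paper's choice of Bernstein is presumably motivated by the fact that it is also needed for the uniform-over-$\com$ version in Lemma~\ref{boundBsup}, where a tail bound plus a net argument is required and Rosenthal would not suffice. For \eqref{eq:lem18_2}, you use a good-event decomposition $\mathcal{A}(\bx)$ combined with Weyl's inequality and Markov, which is the standard probabilistic peeling route for resolvent bounds; the paper instead runs a fully algebraic argument based on $(a+b)^4\le 8(a^4+b^4)$ with $a=\|B_{k,\lambda}^{-1}-(\Exp B_{k,\lambda})^{-1}\|_{\op}$ and $b=\|(\Exp B_{k,\lambda})^{-1}\|_{\op}$, the resolvent identity, the deterministic bound $\|B_{k,\lambda}^{-1}\|_{\op}\le\lambda^{-1}$, and \eqref{eq:lem18_1}. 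Both hinge on exactly the same constraint $kh_k^d\ge\lambda^{-2}$ to absorb the $\lambda^{-4}$ factor, so they are morally equivalent; the paper's version is a line shorter, yours makes the "with high probability $B_k$ is close to $E_k$ and hence nondegenerate" intuition explicit. Your proof of \eqref{eq:lem18_3} via the resolvent identity and Cauchy--Schwarz is exactly the paper's argument.

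One small point worth flagging: when you say "bound the operator norm by a constant times the maximum of the entries" you are implicitly using that $S$ is a constant depending only on $d$ and $\beta$, which is fine; the paper makes this explicit via the $\ell_1$-norm comparison and carries the $S^6$ factor. Also, your description of the entry $Z_i(\bx)$ as "kernel times two coordinates of $\bU$" should technically read "kernel times a product of two coordinates of $\bU$" (i.e., one scalar $\bU_s\bU_{s'}K$), but this has no effect on the argument since $\bU$ and $K$ are bounded on $\Supp(K)$, which is all you use.
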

\begin{proof}
For $s,s'\in [S]$, consider the function $G^{(ss')}:\mathbb{R}^d\to \mathbb{R}$ such that 
\begin{align*}
   G^{(ss')}(\bu) = \left(\bU\left(\bu\right)\bU^{\top}\left(\bu\right)\right)_{ss'}, \quad \bu\in \mathbb{R}^d,
\end{align*} 
where, for a matrix $A$, we denote by $(A)_{ss'}$ its $(s,s')$-entry. 
Set 
$$F_i^{(s,s')}(\bx)=G^{(ss')}(\frac{\bx_i - \bx}{h_k})K(\frac{\bx_i - \bx}{h_k}) - \Exp\left[G^{(ss')}(\frac{\bx_i - \bx}{h_k})K(\frac{\bx_i - \bx}{h_k})\right].
$$ 
The inequality between the operator norm and the $\ell_1$-norm of a matrix yields that
    \begin{align}\label{eq:B1}
    \norm{B_{k,\lambda}(\bx) - \Exp\left[B_{k,\lambda}(\bx)\right]}_{\op} 
    \leq \sum_{s,s'=1}^{S}\left|k^{-1}h_k^{-d}\sum_{i=1}^{k}F_i^{(s,s')}(\bx)\right|,
\end{align}
so that 
\begin{align*}
    \Exp\left[\norm{B_{k,\lambda}(\bx) - \Exp\left[B_{k,\lambda}(\bx)\right]}^{4}_{\op}\right] 
    &\leq S^6
    \sum_{s,s'=1}^{S}\Exp\left[\left|k^{-1}h_k^{-d}\sum_{i=1}^{k}F_i^{(s,s')}(\bx)\right|^4\right]
    \\
    & = 4 S^6
    \sum_{s,s'=1}^{S}\int_0^\infty t^3 \Prob  \left[\left|k^{-1}h_k^{-d}\sum_{i=1}^{k}F_i^{(s,s')}(\bx)\right|>t\right] \drm t.
\end{align*}
To control the probabilities on the right hand side, we use Bernstein's inequality that we recall here for convenience.

\begin{lemma}\label{lem:Bern} Let $\{\zeta_i\}_{i=1}^{k}$ be a collection of real valued  independent random variables with zero means such that, for any $i=1,\dots,k$ and any $q\in  \{2,3,\dots \},$  the Bernstein condition $\Exp[|\zeta_i|^q]\le (q!/2) \nu H^{q-2}$ is satisfied with some 
$\nu>0, H>0$. Then, for any $t>0$,
\begin{align*}
&\Prob\left(\left|\frac1k\sum_{i=1}^{k}\zeta_i\right|> t\right) \le 2\exp\left(-\frac{t^2k}{2(\nu + Ht)}\right).
\end{align*}
\end{lemma}

We apply Lemma \ref{lem:Bern} with $\zeta_i=h_k^{-d}F_i^{(s,s')}(\bx)$.  Let $q\ge 2$. We have $F_i^{(s,s')}(\bx)=X-\Exp[X]$ with $X=G^{(ss')}(\frac{\bx_i - \bx}{h_k})K(\frac{\bx_i - \bx}{h_k})$. Using the inequalities  
$\Exp[|X-\Exp[X]|^q] \le 2^{q-1}(\Exp[|X|^q]+ |\Exp[X]|^q)\le 2^q \Exp[|X|^q]$ we obtain that,
for any $\bx\in \com$,
\begin{align*}
\Exp[|F_i^{(s,s')}(\bx)|^q]&\le 2^q
\Exp\left[\left|G^{(ss')}(\frac{\bx_i - \bx}{h_k})K(\frac{\bx_i - \bx}{h_k})\right|^{q}\right] 
\\
    &= 2^q \int \left|G^{(ss')}(\frac{\bw - \bx}{h_k})K(\frac{\bw - \bx}{h_k})\right|^{q} p(\bw)\drm \bw 
    \\
    &\le   p_{\max} 2^q h_k^{d} \int \left|G^{(ss')}(\bu)K(\bu)\right|^{q} \drm \bu 
    \leq \cst_0 \cst_1^q h_k^{d},   
\end{align*}
where we have used the fact that the functions $G^{(ss')}$ and $K$ are uniformly bounded on the support of $K$. Here, the constants $\cst_0,\cst_1$ do not depend on $\bx,s,s'$.  Thus, for any $i=1,\dots,k$ and any $q\in  \{2,3,\dots \},$ 
$$
\Exp[|\zeta_i|^q]\le  \nu H^{q-2} \quad \text{with} \ \nu=\cst_0\cst_1^2 h^{-d}_k, \quad H = \cst_1 h^{-d}_k.
$$
Applying Lemma \ref{lem:Bern} with these values of $\nu$ and $H$ we get that there exists a constant $\cst_2>0$ such that,
 for any  $t>0$ and any $\bx\in \com$,
\begin{align}
 \label{eq:B2}
\Prob\left[\left|k^{-1}h_k^{-d}\sum_{i=1}^{k}F_i^{(s,s')}(\bx)\right|\geq t\right] 
&\leq 2\exp\left( -\frac{t^2kh_k^d}{\cst_2(1 + t)}\right).
\end{align}
Consequently,
\begin{align*}
    \Exp\left[\norm{B_{k,\lambda}(\bx) - \Exp\left[B_{k,\lambda}(\bx)\right]}^{4}_{\op}\right] 
    &\leq 
    8 S^8
    \int_0^\infty t^3 \exp\left( -\frac{t^2kh_k^d}{\cst_2(1 + t)}\right) \drm t
    \\
    &\leq 
    \cst\left(\int_0^1 t^3 \exp\left( -\frac{t^2kh_k^d}{2\cst_2}\right) \drm t + 
    \int_1^\infty t^3 \exp\left( -\frac{t kh_k^d}{2\cst_2}\right) \drm t\right)
    \\
    &\leq 
    \cst_3 h_k^{-2d} k^{-2},
\end{align*}
where we have used the assumption $kh_k^d\ge 1$ and $\cst_3$ is a constant that does not depend on $\bx$. 
This proves \eqref{eq:lem18_1}. 

To prove \eqref{eq:lem18_2}, note that Lemma \ref{elbound}(ii) implies the inequality $\sup_{\bx\in\com}\norm{\left(\Exp\left[B_{k}(\bx)\right]\right)^{-1}}_{\op}\leq \lambda_{\min}^{-1}$. Thus, also 
\begin{align}\label{eq:lem18_4}
\sup_{\bx\in\com}\norm{\left(\Exp\left[B_{k,\lambda}(\bx)\right]\right)^{-1}}_{\op}\leq \lambda_{\min}^{-1},     
\end{align}
while $\sup_{\bx\in\com}\norm{B_{k,\lambda}(\bx)^{-1}}_{\op}\leq \lambda^{-1}$. Therefore,
\begin{align}
    \Exp\left[\norm{B_{k,\lambda}(\bx)^{-1}}_{\op}^4\right] &\leq 8\Exp\left[\norm{B_{k,\lambda}(\bx)^{-1} - (\Exp[B_{k,\lambda}(\bx)])^{-1}}_{\op}^4\right] + \frac{8}{\lambda_{\min}^4}\nonumber
    \\&\leq 8\lambda_{\min}^{-4}\lambda^{-4}\Exp\left[\norm{B_{k,\lambda}(\bx)-\Exp\left[B_{k,\lambda}(\bx)\right]}_{\op}^{4}\right] + 8\lambda^{-4}_{\min}  \label{eq:lem18_5}
    \\&\leq \frac{8 \cst_3}{\lambda^{4}\lambda_{\min}^{4}}h_k^{-2d}k^{-2}+ \frac{8}{\lambda_{\min}^4},\nonumber
\end{align}
so that for $kh_k^d \geq \lambda^{-2}$ we obtain \eqref{eq:lem18_2}.

Finally, we prove \eqref{eq:lem18_3}. Using \eqref{eq:lem18_4} and then the Cauchy-Schwarz inequality we obtain  
\begin{align}\label{eq:lem18_6}
\begin{aligned}
 &\Exp\left[\norm{B_{k,\lambda}(\bx)^{-1} - \left(\Exp[B_{k,\lambda}(\bx)]\right)^{-1}}_{\op}^2\right]\\
&\le
    \Exp\bigg[\norm{{B_{k,\lambda} (\bx)^{-1}}}_{\op}^2\norm{(\Exp[B_{k,\lambda}(\bx)])^{-1}}_{\op}^2\norm{B_{k,\lambda}(\bx) - \Exp[B_{k,\lambda}(\bx)]}_{\op}^2\bigg]  
    \\
   &\le \lambda_{\min}^{-2} \left(\Exp\left[\norm{{B_{k,\lambda} (\bx)^{-1}}}_{\op}^{4}\right]\Exp\left[\norm{B_{k,\lambda}(\bx) - \Exp[B_{k,\lambda}(\bx)]}_{\op}^{4}\right]\right)^{\frac{1}{2}},   
\end{aligned}
\end{align}
so that \eqref{eq:lem18_3} follows by applying \eqref{eq:lem18_1} and \eqref{eq:lem18_2}.
\end{proof}
\begin{lemma}\label{boundBsup}
Let $k \in [n]$, and $h_k = \left(\frac{\log(k+1)}{k}\right)^{\frac{1}{2\beta+d}}$. Let Assumptions \ref{ass:kernel} and  \ref{distass} hold. Then there exists a constant $\cst>0$ such that
\begin{align}\label{first_sup}
    \Exp\left[\sup_{\bx\in\com}\norm{B_{k,\lambda}(\bx) - \Exp\left[B_{k,\lambda}(\bx)\right]}^{4}_{\op}\right] \leq \cst h_k^{-2d}k^{-2}\log(k+1)^{2}.
\end{align}
Furthermore, for $kh_k^d \geq \lambda^{-2}\log(k+1)$ we have
\begin{align}\label{second_sup}
\Exp\left[\sup_{\bx\in\com}\norm{{B_{k,\lambda}(\bx)}^{-1}}_{\op}^{4}\right] \leq \cst \lambda_{\min}^{-4}.
\end{align}
and 
\begin{align}\label{third_sup}
    \Exp\left[\sup_{\bx\in\com}\norm{B_{k,\lambda}(\bx)^{-1} - (\Exp\left[B_{k,\lambda}(\bx)\right])^{-1}}^{2}_{\op}\right] \leq \cst h_k^{-d}k^{-1}\log(k+1).
\end{align}
\end{lemma}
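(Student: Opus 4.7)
\textbf{Proof plan for Lemma \ref{boundBsup}.} The second and third bounds will follow from the first by routine modifications of the arguments already used in Lemma \ref{boundB}, so the real work is to establish the uniform fourth-moment bound \eqref{first_sup}. My plan is to upgrade the pointwise Bernstein bound \eqref{eq:B2} from the proof of Lemma \ref{boundB} to a uniform-in-$\bx$ statement via a discretization of $\com$ combined with the Lipschitz continuity of the relevant empirical process.

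The first step is to revisit the decomposition \eqref{eq:B1}, which reduces the operator-norm control to bounding $\sup_{\bx\in\com}\left|k^{-1}h_k^{-d}\sum_{i=1}^{k}F_i^{(s,s')}(\bx)\right|$ for each of the finitely many pairs $(s,s')$. The key observation is that the map $\bx\mapsto G^{(ss')}(\bu)K(\bu)$ evaluated at $\bu=(\bx_i-\bx)/h_k$ is Lipschitz in $\bx$ with constant of order $h_k^{-1}$: indeed $G^{(ss')}$ is a polynomial which is bounded and Lipschitz on the compact support of $K$, and $K$ is $L_K$-Lipschitz by Assumption \ref{ass:kernel}. Consequently the averaged quantity $k^{-1}h_k^{-d}\sum_{i=1}^{k}F_i^{(s,s')}(\bx)$ is a $(c_0h_k^{-d-1})$-Lipschitz function of $\bx$ on $\com$.

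Next I would cover $\com$ by an $\epsilon$-net $\{\bx_j\}_{j=1}^{N(\epsilon)}$ with $N(\epsilon)\le c_1\epsilon^{-d}$, choose $\epsilon$ of polynomial order in $k^{-1}$ (for instance $\epsilon = h_k^{d+1}k^{-2}$) so that the Lipschitz discretization error is negligible compared with the target rate, and then apply Bernstein's inequality \eqref{eq:B2} together with a union bound over the $N(\epsilon)$ net points. This yields, for every $t>0$,
\begin{align*}
\Prob\left[\sup_{\bx\in\com}\left|k^{-1}h_k^{-d}\sum_{i=1}^{k}F_i^{(s,s')}(\bx)\right|\ge t + k^{-1}\right]\le 2N(\epsilon)\exp\left(-\frac{t^2kh_k^d}{\cst_2(1+t)}\right).
\end{align*}
Integrating this tail bound via $\Exp[X^4]=\int_0^\infty 4t^3\Prob[X>t]\drm t$, the logarithmic factor $\log N(\epsilon)\asymp \log(k+1)$ enters through the threshold $t\asymp\sqrt{\log(k+1)/(kh_k^d)}$ at which the exponential in the union bound becomes small, producing the extra $\log(k+1)^2$ compared with the pointwise bound \eqref{eq:lem18_1}. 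This gives \eqref{first_sup}.

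Finally, \eqref{second_sup} follows by repeating the chain of inequalities in \eqref{eq:lem18_5} pathwise (i.e., taking $\sup_{\bx\in\com}$ inside), then taking expectations and invoking \eqref{first_sup}; the condition $kh_k^d\ge \lambda^{-2}\log(k+1)$ is exactly what absorbs the extra $\log(k+1)^2$ factor so that the first summand remains $O(\lambda_{\min}^{-4})$. Similarly, \eqref{third_sup} follows from the uniform analogue of \eqref{eq:lem18_6} (i.e., pulling the operator-norm identity for $B^{-1}-(\Exp B)^{-1}$ inside $\sup_{\bx\in\com}$), followed by Cauchy--Schwarz and the just-established \eqref{first_sup}--\eqref{second_sup}. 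The main obstacle is purely the uniformization step for \eqref{first_sup}; once the Lipschitz constant of the empirical process in $\bx$ is controlled by $h_k^{-d-1}$ and the covering number of $\com$ is polynomial, the rest is a standard chaining-by-net argument whose only cost is the $\log(k+1)$ factor appearing in the statement.
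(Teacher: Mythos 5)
Your proposal follows essentially the same route as the paper's proof: reduce the operator norm to entrywise suprema, exploit the $O(h_k^{-1})$-Lipschitz continuity of $\bx\mapsto G^{(ss')}(\frac{\bx_i-\bx}{h_k})K(\frac{\bx_i-\bx}{h_k})$, cover $\com$ with a polynomial-size $\epsilon$-net, apply a union bound on top of the Bernstein tail bound from Lemma \ref{boundB}, and integrate the resulting tail; the paper's choice $\epsilon = \text{diam}(\com)h_k^{d/2+1}k^{-1/2}$ and your $\epsilon = h_k^{d+1}k^{-2}$ both keep $\log N(\epsilon)\asymp\log(k+1)$ while making the discretization error negligible, and the derivation of \eqref{second_sup} and \eqref{third_sup} from \eqref{first_sup} is as you describe. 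The only cosmetic difference is that the paper controls the discretization error by splitting the $L^4$-moment (via $(a+b)^4\le 8(a^4+b^4)$) rather than shifting the threshold in the tail probability, which leads to the same estimate.
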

\begin{proof}
Notice that, due to Assumption \ref{ass:kernel} and the choice of $h_k$, the functions $\bx \mapsto G^{(ss')}(\frac{\bx_i - \bx}{h_k})K(\frac{\bx_i - \bx}{h_k})$ are supported on the compact $\Theta'$ and are Lipschitz continuous with Lipschitz constant $\cst_1 h_k^{-1}$, where the $\cst_1>0$ is independent of  $\bx_i$'s and also on $s,s'$ (by taking the maximum over $s,s'\in [S]$). It follows that there exists a constant $\cst$ such that for all $\bx, \by \in \com$ and all $s,s'\in [S]$ we have 
\begin{align}\label{eq:B3}
    \left|\sum_{i=1}^{k}(F_i^{(s,s')}(\bx)-F_i^{(s,s')}(\by)) \right|\leq \cst k h_k^{-1}\norm{\bx - \by}.
\end{align}
Set for brevity $\tilde\bB(\bx)=B_{k,\lambda}(\bx) - \Exp\left[B_{k,\lambda}(\bx)\right]$. Using \eqref{eq:B1} we obtain
\begin{align*}
    \Exp\left[\sup_{\bx\in\com}\norm{\tilde\bB(\bx)}^{4}_{\op}\right] 
    &\leq S^6
    \sum_{s,s'=1}^{S}\Exp\left[\sup_{\bx\in\com}\left|k^{-1}h_k^{-d}\sum_{i=1}^{k}F_i^{(s,s')}(\bx)\right|^4\right].
\end{align*}
For $\epsilon>0$, let $\mathcal{N}$ be the minimal $\epsilon$-net of $\com$ with respect to the Euclidean norm.  Using \eqref{eq:B3} we get
\begin{equation}\label{eq:B4}
\begin{aligned}
    \Exp\left[\sup_{\bx\in\com}\norm{\tilde\bB(\bx)}_{\op}^4\right] &\leq 8S^6
    \sum_{s,s'=1}^{S}\Exp\left[\sup_{\bx\in\mathcal{N}}\left|k^{-1}h_{k}^{-d}\sum_{i=1}^{k}F_i^{(s,s')}(\bx)\right|^4\right]  \\ 
    & +\frac{8S^6}{k^{4}h_{k}^{4d}}
    \sum_{s,s'=1}^{S}\Exp\left[\sup_{\bx,\by:\norm{\bx-\by}\leq \epsilon}\left|\sum_{i=1}^{k}\left(F_i^{(s,s')}(\bx)-F_i^{(s,s')}(\by)\right)\right|^4\right] 
    \\&\leq 8 S^6
    \sum_{s,s'=1}^{S}\Exp\left[\sup_{\bx\in\mathcal{N}}\left|k^{-1}h_{k}^{-d}\sum_{i=1}^{k}F_i^{(s,s')}(\bx)\right|^4\right] + \cst h_k^{-4d-4}\epsilon^4.
\end{aligned}
\end{equation}
We now provide an upper bound for the last sum in \eqref{eq:B4}. Denote by $\mathcal{N}\left(\com, \epsilon\right)$ the cardinality of $\mathcal{N}$. Using a standard bound $\mathcal{N}\left(\com, \epsilon\right)\le \left(\frac{\text{diam}(\com)}{\epsilon}+1\right)^{d}$ we find that, for any $t>0$,
\begin{align*}
\Prob\left[\sup_{\bx\in\mathcal{N}}\left|k^{-1}h_k^{-d}\sum_{i=1}^{k}F_i^{(s,s')}(\bx)\right|\geq t\right]&\leq \mathcal{N}(\com,\epsilon)\sup_{\bx\in\mathcal{N}}\Prob\left[\left|k^{-1}h_k^{-d}\sum_{i=1}^{k}F_i^{(s,s')}(\bx)\right|\geq t\right]
    \\&\leq
\left(\frac{\text{diam}(\com)}{\epsilon}+1\right)^{d}\sup_{\bx\in\mathcal{N}}\Prob\left[\left|k^{-1}h_k^{-d}\sum_{i=1}^{k}F_i^{(s,s')}(\bx)\right|\geq t\right].
\end{align*} 
Combining the last inequality with \eqref{eq:B2} we get that, for any  $t>0$,
\begin{align*}
\Prob\left[\sup_{\bx\in\mathcal{N}}\left|k^{-1}h_k^{-d}\sum_{i=1}^{k}F_i^{(s,s')}(\bx)\right|\geq t\right] &\leq 
2\exp\left(-\frac{t^2kh_k^d}{\cst(1 + t)} + d\log\left(\frac{\text{diam}(\com)}{\epsilon}+1\right)\right)
      \enspace.
\end{align*}
Set $\epsilon = \text{diam}(\com)h_k^{\frac{d}{2}+1}k^{-\frac{1}{2}}$. Then, for some constant $\cst_3>0$, 
\begin{align}\label{eq:B5}
\Prob\left[\sup_{\bx\in\mathcal{N}}\left|k^{-1}h_k^{-d}\sum_{i=1}^{k}F_i^{(s,s')}(\bx)\right|\geq t\right] &\leq 
      2\exp\left(-\frac{t^2kh_k^d}{\cst_3(1 + t)} + \cst_3 \log\left(k+1\right)\right).
\end{align}
Let $a = \cst_3( 2 k^{-1}h_k^{-d}\log(k+1))^{\frac{1}{2}}$, where $\cst_3$ is the constant from \eqref{eq:B5}. Noticing that $k h_k^{d}\ge \log(k+1)$ and using \eqref{eq:B5} we obtain 
\begin{equation}\label{eq:B6}
\begin{aligned}
\sum_{s,s'=1}^{S}\Exp\left[\sup_{\bx\in\mathcal{N}}\left|k^{-1}h_{k}^{-d}\sum_{i=1}^{k}F_i^{(s,s')}(\bx)\right|^4\right] 
&\leq S^2a^4 + 4S^2\int_{a}^{\cst_3\sqrt{2}}t^3\exp\left( - \frac{t^2kh_k^d}{\cst}\right)\drm t 
\\&\qquad + 4S^2\int_{\cst_3\sqrt{2}}^{\infty}t^3\exp\left( - \frac{tkh_k^d}{\cst}\right)\drm t
    \\&\leq  \cst k^{-2}h_k^{-2d}\log(k+1)^2     .
\end{aligned}
\end{equation}
Using \eqref{eq:B4}, \eqref{eq:B6} and the fact that $\epsilon = \text{diam}(\com)h_k^{\frac{d}{2}+1}k^{-\frac{1}{2}}$ we get \eqref{first_sup}.

To prove inequality \eqref{second_sup} we argue analogously to \eqref{eq:lem18_5} to obtain
\begin{align*}
    \Exp\left[\sup_{\bx\in\com}\norm{B_{k,\lambda}(\bx)^{-1}}^{4}_{\op}\right] 
    &\leq 8\lambda_{\min}^{-4}\lambda^{-4}\Exp\left[\sup_{\bx\in\com}\norm{B_{k,\lambda}(\bx)-\Exp\left[B_{k,\lambda}(\bx)\right]}_{\op}^{4}\right] + 8\lambda^{-4}_{\min}
    \\
    &\le \cst\lambda_{\min}^{-4}\lambda^{-4}k^{-2}h_{k}^{-2d}\log(k+1)^2+ 8\lambda^{-4}_{\min},
\end{align*}
where the last inequality follows from \eqref{eq:B6}.
Since, by assumption, $kh_k^d \geq \lambda^{-2}\log(k+1)$ the bound \eqref{second_sup} follows.

Finally, the bound \eqref{third_sup} is derived from \eqref{first_sup} and \eqref{second_sup} by using the same argument as in \eqref{eq:lem18_6} with the only difference that $\norm{\cdot}_{\op}$ should be replaced by $\sup_{\bx\in\com}\norm{\cdot}_{\op}$.
\end{proof}

\begin{lemma}\label{boundsupsubG}
 Let Assumptions \ref{ass:kernel}, \ref{assnoise} and \ref{distass} hold. Then, for $k \in [n]$ and $h_k = \left(\frac{\log(k+1)}{k}\right)^{\frac{1}{2\beta+d}}$ we have
\begin{align*}
    \Exp\left[\sup_{\bx\in\com}\norm{\bG_k(\bx)}^{4} \right]\leq \cst k^{-2}h_k^{-2d}\log(k+1)^2\enspace.
\end{align*}
\end{lemma}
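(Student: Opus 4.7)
The plan is to mirror the two-part structure used in Lemma \ref{boundBsup}: (i) a pointwise Bernstein-type tail bound for each coordinate of $\bG_k(\bx)$, and (ii) a Lipschitz/$\epsilon$-net argument that upgrades the pointwise bound to a uniform one over $\com$.

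For step (i), fix $\bx \in \com$ and a coordinate $s \in [S]$, and consider $h_k^d [\bG_k(\bx)]_s = k^{-1}\sum_{i=1}^k Z_i$ with $Z_i := [\bR_{i,k}(\bx)]_s \xi_i$. By Assumption~\ref{assnoise} the pairs $([\bR_{i,k}(\bx)]_s, \xi_i)$ are independent, $\Exp[Z_i]=0$, and the sub-exponential hypothesis yields $\Exp[|\xi_i|^q] \leq q!\, C \sigma^q$ for $q \geq 2$. The factor $[\bR_{i,k}(\bx)]_s$ is uniformly bounded on $\Supp(K)$ and, by Lemma~\ref{elbound}(i), satisfies $\Exp[|[\bR_{i,k}(\bx)]_s|^q] \leq \cst\, h_k^d$. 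Combining these two facts gives a Bernstein-type moment bound $\Exp[|Z_i|^q] \leq (q!/2)\nu H^{q-2}$ with $\nu = \cst_0 h_k^d$ and $H = \cst_1$. Lemma~\ref{lem:Bern} then yields, for every $u>0$ and every $\bx \in \com$,
$$\Prob\bigl(|[\bG_k(\bx)]_s| > u\bigr) \leq 2\exp\!\left(-\frac{k h_k^d u^2}{\cst_2(1+u)}\right),$$
which is the analogue of \eqref{eq:B2}.

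For step (ii), I would use the same deterministic Lipschitz control as in the proof of Lemma~\ref{boundBsup}. Because $K$ is $L_K$-Lipschitz with compact support and $\bU$ is smooth, the map $\bx \mapsto \bR_{i,k}(\bx)$ is $\cst_3 h_k^{-1}$-Lipschitz on $\com$ uniformly in $\bx_i$, hence
$$\sup_{\substack{\bx,\by\in\com\\ \norm{\bx-\by}\le \epsilon}} \norm{\bG_k(\bx)-\bG_k(\by)} \leq \frac{\cst_3\, \epsilon}{k h_k^{d+1}}\sum_{i=1}^k |\xi_i|.$$
Taking fourth powers and using $\Exp[|\xi_i|^4] \leq \cst$ together with $(\sum|\xi_i|)^4 \leq k^3\sum|\xi_i|^4$ gives
$$\Exp\!\left[\sup_{\norm{\bx-\by}\le\epsilon}\norm{\bG_k(\bx)-\bG_k(\by)}^4\right] \leq \cst_4\, h_k^{-4d-4}\epsilon^4.$$
Choosing $\epsilon = \mathrm{diam}(\com)\, h_k^{1+d/2} k^{-1/2}$ (exactly as in Lemma~\ref{boundBsup}) turns this contribution into $\cst\, k^{-2} h_k^{-2d}$, which is absorbed into the target rate.

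For the net part, let $\mathcal{N}$ be a minimal $\epsilon$-net of $\com$, so $\log|\mathcal{N}| \leq \cst\log(k+1)$ for the above choice of $\epsilon$. A union bound combined with the pointwise tail bound yields
$$\Prob\!\left(\sup_{\by\in\mathcal{N}}|[\bG_k(\by)]_s|>u\right) \leq 2\exp\!\left(-\frac{k h_k^d u^2}{\cst_2(1+u)} + \cst_5\log(k+1)\right).$$
Integrating $\Exp[\sup_{\by\in\mathcal{N}} |[\bG_k(\by)]_s|^4] = 4\int_0^\infty t^3\Prob(\cdot)\,dt$ with cutoff at $a = \cst_6 (k^{-1}h_k^{-d}\log(k+1))^{1/2}$ (valid since $k h_k^d \geq \log(k+1)$ for the prescribed $h_k$) bounds the integral by $\cst\, k^{-2} h_k^{-2d}\log(k+1)^2$, exactly as in \eqref{eq:B6}. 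Summing over the finitely many coordinates $s\in[S]$ and combining the net and oscillation pieces through $\sup_\bx\norm{\bG_k(\bx)}^4 \leq 8\sup_{\by\in\mathcal{N}}\norm{\bG_k(\by)}^4 + 8\sup_{\norm{\bx-\by}\le\epsilon}\norm{\bG_k(\bx)-\bG_k(\by)}^4$ gives the claim.

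The main technical point is the first step: one must verify that the random products $Z_i = [\bR_{i,k}(\bx)]_s \xi_i$ genuinely satisfy a Bernstein condition with variance proxy scaling like $h_k^d$ (rather than a constant), since this scaling is what gives the factor $h_k^{-d}$ instead of $h_k^{-2d}$ in the final rate. Once this is in place, the remainder is a faithful replay of the $\epsilon$-net argument already carried out for the matrix $B_{k,\lambda}(\bx)$ in Lemma~\ref{boundBsup}.
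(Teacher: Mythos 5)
Your proposal mirrors the paper's proof step for step: a Bernstein tail bound for each coordinate of $\bG_k(\bx)$ with variance proxy of order $h_k^d$, an $\epsilon$-net of scale $h_k^{1+d/2}k^{-1/2}$ with a union bound, a Lipschitz control on the oscillation, and the same truncated integration of the tail to produce the $k^{-2}h_k^{-2d}\log(k+1)^2$ rate. The one place you are more explicit than the paper is the oscillation term, where you correctly keep the random factor $\sum_i|\xi_i|$ and bound its fourth moment, whereas the paper's Lipschitz display states a purely deterministic bound that, read literally, suppresses the $\xi_i$'s and defers that bookkeeping to the reader; your rendering is the careful version of that step, not a different argument.
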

\begin{proof}
Recall that $\bG_k(\bx) = k^{-1}h_k^{-d}\sum_{i=1}^{k}\bR_{i,k}(\bx)\xi_i$ and set $F_{i}^{(s)}(\bx) = \left(\bU\left(\frac{\bx_i-\bx}{h_{k}}\right)K\left(\frac{\bx_i-\bx}{h_{k}}\right)\right)_{s}\xi_i$, for $i\in[k]$, where $\left(\bU\left(\frac{\bx_i-\bx}{h_{k}}\right)K\left(\frac{\bx_i-\bx}{h_{k}}\right)\right)_{s}$ is the $s$-th coordinate of the vector $\bR_{i,k}(\bx)=\bU\left(\frac{\bx_i-\bx}{h_{k}}\right)K\left(\frac{\bx_i-\bx}{h_{k}}\right)$, for $s\in[S]$.  
 Note that 
\begin{align*}
    \norm{\bG_k(\bx)} \leq \sum_{s=1}^{S} \left|k^{-1}h_k^{-d}\sum_{i=1}^{k}F_{i}^{(s)}(\bx)\right|,
\end{align*}
and therefore
\begin{align}\label{G1}
    \Exp\left[\sup_{\bx\in\com}\norm{\bG_k(\bx)}^4\right] \leq S^3\sum_{s=1}^{S}\Exp\left[\sup_{\bx\in\com}\left|k^{-1}h_{k}^{-d}\sum_{i=1}^{k}F_i^{(s)}(\bx)\right|^4\right].
\end{align}
Next, 
arguing as in the proof of Lemma \ref{boundBsup} we deduce that, for any $\bx,\by\in\com$,
\begin{align}\label{G2}
    \left|\sum_{i=1}^{k}\left(F_{i}^{(s)}(\bx) - F_{i}^{(s)}(\by)\right)\right|\leq \cst k h_{k}^{-1}\norm{\bx-\by}.
\end{align}
Given \eqref{G1} and \eqref{G2}, the rest of the proof is quite analogous to the proof of Lemma \ref{boundBsup}. Thus, as in \eqref{eq:B3} we get 
\begin{equation}\label{eq:primebiundsubG}
\begin{aligned}
    \Exp\left[\sup_{\bx\in\com}\norm{\bG_k(\bx)}^4\right] &\leq 
    8 S^3\sum_{s=1}^{S}\Exp\left[\sup_{\bx\in\mathcal{N}}\left|k^{-1}h_{k}^{-d}\sum_{i=1}^{k}F_i^{(s)}(\bx)\right|^4\right] + \cst h_k^{-4d-4}\epsilon^4,
\end{aligned}
\end{equation}
and, for any $t>0$,
\begin{align*}
\Prob\left[\sup_{\bx\in\mathcal{N}}\left|k^{-1}h_k^{-d}\sum_{i=1}^{k}F_i^{(s)}(\bx)\right|\geq t\right]&\leq
\left(\frac{\text{diam}(\com)}{\epsilon}+1\right)^{d}\sup_{\bx\in\mathcal{N}}\Prob\left[\left|k^{-1}h_k^{-d}\sum_{i=1}^{k}F_i^{(s)}(\bx)\right|\geq t\right].
\end{align*}
To control the probability on the right hand side
we apply Lemma \ref{lem:Bern} with $\zeta_i=h_k^{-d}F_{i}^{(s)}(\bx)$. 
For any $q \geq 2$, and $\bx \in \mathcal{N}$ we have
\begin{align}
\begin{aligned}
  \label{subexp1}
\Exp\left[\left|\left(\bU(\frac{\bx_i - \bx}{h_k})K(\frac{\bx_i - \bx}{h_k})\right)_{s}\right|^{q}\right] 
    &= h_k^{d} \int \left|\left(\bU(\bu)K(\bu)\right)_{s}\right|^{q} p(\bx + h_k\bu)\drm \bu
    \\
    &\le   p_{\max} h_k^{d} \int \left|\left(\bU(\bu)K(\bu)\right)_{s}\right|^{q} \drm \bu 
    \leq \cst_0 \cst_1^q h_k^{d},     
\end{aligned}
\end{align}
where we have used the fact that the functions $\left(\bU(\bu)K(\bu)\right)_{s}$ are uniformly bounded on the support of $K$ for all $s\in [S]$. Here, the constants $\cst_0,\cst_1$ do not depend on $\bx,s$.  
Since $\xi_i$ is a sub-exponential random variable we have $\Exp[|\xi_i|^q]\le (\cst_2 q)^{q}$ for any $q\ge1$, where the constant $\cst_2>0$ does not depend on $q$. Using this fact and \eqref{subexp1} we obtain 
\begin{align}\label{subexp1_1}
\Exp\left[\left|F_{i}^{(s)}(\bx)\right|^{q}\right] 
&\leq \cst_0 (\cst_3 q)^{q} h_k^{d}.
\end{align}
It follows from \eqref{subexp1_1} that, for any $q\in  \{2,3,\dots \}$,  
\begin{align*}
\Exp[|\zeta_i|^q]\le \cst_0 (\cst_3 q)^{q} h_k^{-(q-1)d} \le (q!/2) \nu H^{q-2}, \quad  
\end{align*}
where $\nu=\cst_4 h_k^{-d}$, $H = \cst_5 h_k^{-d}$ with some constants $\cst_4,\cst_5$ that do not depend on $q$. Applying Lemma \ref{lem:Bern} we get that,
 for any  $t>0$ and any fixed $\bx$,
\begin{align*}
\Prob\left[\left|k^{-1}h_k^{-d}\sum_{i=1}^{k}F_i^{(s)}(\bx)\right|\geq t\right] 
&\leq 2\exp\left( -\frac{t^2kh_k^d}{\cst(1 + t)}\right),
\end{align*}
so that
\begin{align*}
\Prob\left[\sup_{\bx\in\mathcal{N}}\left|k^{-1}h_k^{-d}\sum_{i=1}^{k}F_i^{(s)}(\bx)\right|\geq t\right] &\leq 
2\exp\left(-\frac{t^2kh_k^d}{\cst(1 + t)} + d\log\left(\frac{\text{diam}(\com)}{\epsilon}+1\right)\right)
      \enspace.
\end{align*}
Note that this inequality is of the same form as \eqref{eq:B5}. The rest of the proof repeats the argument after \eqref{eq:B5} in the proof of Lemma \ref{boundBsup} and therefore we omit it. 
\end{proof}

\begin{lemma}\label{boundCX} Let $k \in [n]$,  $kh_k^d\ge 1$, and let Assumptions \ref{ass:kernel}, \ref{mainassump}(iv) and \ref{distass} hold. Then
\begin{align}\label{eq:lem:boundCX}
   \sup_{\bx\in\com} \Exp \left[\norm{\bC_k(\bx) - \Exp\left[\bC_k(\bx)\right]}^4\right] \leq \cst h_k^{-2d}k^{-2}
\end{align}
and 
\begin{align}\label{eq:lem:boundCX-1}
\sup_{\bx\in\com} \Exp \left[\norm{\bC_k(\bx))}^4\right] \leq \cst.
\end{align}
\end{lemma}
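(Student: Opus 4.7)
The plan is to observe that $\bC_k(\bx) - \Exp[\bC_k(\bx)] = \frac{1}{kh_k^d}\sum_{i=1}^k \bZ_i(\bx)$ is a normalized sum of i.i.d.\ centered random vectors, where $\bZ_i(\bx) = \bR_{i,k}(\bx)f(\bx_i) - \Exp[\bR_{i,k}(\bx)f(\bx_i)]$. Since $\|\bv\|^2 \le S\sum_{s=1}^S v_s^2$ gives $\|\bv\|^4 \le S^3 \sum_{s=1}^S v_s^4$, it suffices to bound the fourth moment of each scalar coordinate of this sum, after which summing over the $S$ coordinates (whose number depends only on $d$ and $\beta$) yields \eqref{eq:lem:boundCX}.

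For each fixed coordinate $s \in [S]$, the summands $Z_{i,s}(\bx)$ are i.i.d., mean zero, and bounded. Applying Rosenthal's (or equivalently Marcinkiewicz--Zygmund's) inequality, one has
\[
\Exp\!\left[\Big|\sum_{i=1}^k Z_{i,s}(\bx)\Big|^4\right] \le \cst\Big(k\,\Exp[Z_{1,s}(\bx)^4] + k^2\big(\Exp[Z_{1,s}(\bx)^2]\big)^2\Big).
\]
Next, using Assumption \ref{mainassump}(iv) (so $|f(\bx_i)| \le M$ on the support of $\bR_{i,k}(\bx)$ for any $\bx \in \com$) together with the estimate $\Exp[|(\bU K)_s((\bx_i - \bx)/h_k)|^q] \le \cst\, h_k^d$ obtained exactly as in the proof of Lemma \ref{elbound}(i), one gets $\Exp[|Z_{1,s}(\bx)|^q] \le \cst\, h_k^d$ for $q = 2,4$, uniformly in $\bx \in \com$. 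Combined, these give
\[
\Exp\!\left[\Big|\sum_{i=1}^k Z_{i,s}(\bx)\Big|^4\right] \le \cst\big(k h_k^d + k^2 h_k^{2d}\big) \le \cst\, k^2 h_k^{2d},
\]
where the last inequality uses $kh_k^d \ge 1$. Dividing by $(k h_k^d)^4$ and summing over $s$ produces \eqref{eq:lem:boundCX}.

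For \eqref{eq:lem:boundCX-1}, I would split $\bC_k(\bx) = \Exp[\bC_k(\bx)] + (\bC_k(\bx) - \Exp[\bC_k(\bx)])$ and apply $\|\ba+\bb\|^4 \le 8(\|\ba\|^4 + \|\bb\|^4)$. The centered term is already controlled by \eqref{eq:lem:boundCX}, and since $k h_k^d \ge 1$ the bound $\cst\, h_k^{-2d} k^{-2} \le \cst$ is automatic. For the mean, Jensen's inequality gives $\|\Exp[\bC_k(\bx)]\| \le h_k^{-d}\Exp[\|\bR_{1,k}(\bx)\||f(\bx_1)|] \le M h_k^{-d}\Exp[\|\bR_{1,k}(\bx)\|] \le M p_{\max}\nu_1$ by Lemma \ref{elbound}(i), uniformly in $\bx \in \com$, so $\|\Exp[\bC_k(\bx)]\|^4$ is bounded by a constant. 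Combining the two contributions yields \eqref{eq:lem:boundCX-1}.

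The estimates are essentially routine: there is no genuine obstacle beyond invoking a standard moment inequality for sums of i.i.d.\ bounded centered random variables. The only point that requires a little care is matching the scaling $h_k^d$ coming from the change of variables $\bu = (\by-\bx)/h_k$ inside every moment of $\bR_{i,k}$ against the normalization $1/(kh_k^d)$; the assumption $k h_k^d \ge 1$ guarantees that the variance term $k^2 h_k^{2d}$ dominates the fourth-moment term $k h_k^d$, which is what makes the rate $k^{-2} h_k^{-2d}$ come out correctly.
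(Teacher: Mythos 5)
Your argument is correct. The only meaningful difference from the paper is the moment inequality you invoke: you apply Rosenthal's inequality (or Marcinkiewicz--Zygmund plus H\"older) directly to the fourth moment of each scalar coordinate of the centered sum, whereas the paper proves \eqref{eq:lem:boundCX} ``following the same lines as Lemma~\ref{boundB}'', which means applying Bernstein's inequality coordinatewise to obtain an exponential tail bound $\Prob\big[|k^{-1}h_k^{-d}\sum_i F_i^{(s)}(\bx)|>t\big]\le 2\exp\big(-t^2kh_k^d/(\cst(1+t))\big)$ and then recovering the fourth moment by integrating $4\int_0^\infty t^3\,\Prob[\,\cdot>t\,]\,\drm t$. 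Both routes are standard and yield the same rate, and your treatment of the moment scalings $\Exp[|Z_{1,s}|^q]\le \cst h_k^d$ and of the dominance $kh_k^d+k^2h_k^{2d}\le 2k^2h_k^{2d}$ under $kh_k^d\ge 1$ is exactly right; your second part, splitting off the mean and bounding $\|\Exp[\bC_k(\bx)]\|\le Mp_{\max}\nu_1$ by Jensen and Lemma~\ref{elbound}(i), matches the paper's use of \eqref{eq:dop2}. The trade-off worth noting is that the paper's Bernstein route, while more machinery than is strictly needed for this pointwise lemma, is what carries over verbatim to the uniform variant Lemma~\ref{boundC}, where exponential tails are essential for the $\epsilon$-net union bound; your Rosenthal argument would not extend to that $\sup_{\bx\in\com}$ version, which is presumably why the paper opts for the heavier tool here and omits both proofs by analogy. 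For this lemma alone, your proof is cleaner and fully valid.
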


\begin{proof} Recall that $\bC_k(\bx)=k^{-1}h_k^{-d}\sum_{i=1}^{k}\bU\left(\frac{\bx_i-\bx}{h_{k}}\right)K\left(\frac{\bx_i-\bx}{h_{k}}\right)f(\bx_i)$. Assumption  \ref{mainassump}(iv) implies that $f(\bx_i)$'s are uniformly bounded. Given this fact, the proof of \eqref{eq:lem:boundCX} is derived following the same lines as in Lemma \ref{boundB} and it is therefore omitted. The bound \eqref{eq:lem:boundCX-1} is obtained by plugging \eqref{eq:dop2} and \eqref{eq:lem:boundCX} in the inequality 
$$\Exp \left[\norm{\bC_k(\bx)}^4\right]\le 8\left(\Exp \left[\norm{\bC_k(\bx) - \Exp\left[\bC_k(\bx)\right]}^4\right]
+ \norm{\Exp [\bC_k(\bx)]}^4\right)
.
$$\end{proof}

\begin{lemma}\label{boundC} Let $k \in [n]$, and $h_k = \left(\frac{\log(k+1)}{k}\right)^{\frac{1}{2\beta+d}}$. Let Assumptions \ref{ass:kernel}, \ref{mainassump}(iv) and \ref{distass} hold. Then
\begin{align*}
   \Exp \left[\sup_{\bx\in\com}\norm{\bC_k(\bx) - \Exp\left[\bC_k(\bx)\right]}^4\right] \leq \cst h_k^{-2d}k^{-2}\log(k+1)^2.
\end{align*}
\end{lemma}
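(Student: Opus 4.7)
\noindent\textbf{Proof proposal for Lemma \ref{boundC}.}
The plan is to adapt the proof of Lemma \ref{boundBsup} essentially verbatim, replacing the matrix-valued summands with vector-valued ones. For $s\in[S]$ and $i\in[k]$ define
\[
F_i^{(s)}(\bx) \,=\, \Big(\bU\big(\tfrac{\bx_i-\bx}{h_k}\big)K\big(\tfrac{\bx_i-\bx}{h_k}\big)\Big)_{s} f(\bx_i) \,-\, \Exp\!\left[\Big(\bU\big(\tfrac{\bx_i-\bx}{h_k}\big)K\big(\tfrac{\bx_i-\bx}{h_k}\big)\Big)_{s} f(\bx_i)\right],
\]
so that the inequality between the Euclidean norm and the $\ell_1$-norm gives
\[
\norm{\bC_k(\bx) - \Exp[\bC_k(\bx)]} \,\le\, \sum_{s=1}^{S}\Big|k^{-1}h_k^{-d}\sum_{i=1}^{k}F_i^{(s)}(\bx)\Big|,
\]
and therefore $\Exp\!\left[\sup_{\bx\in\com}\norm{\bC_k(\bx)-\Exp[\bC_k(\bx)]}^4\right]$ is bounded by $S^3\sum_{s}\Exp[\sup_{\bx\in\com}|k^{-1}h_k^{-d}\sum_i F_i^{(s)}(\bx)|^4]$.

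Next, I would establish a Lipschitz bound in $\bx$: since $K$ is $L_K$-Lipschitz and compactly supported (Assumption \ref{ass:kernel}), the monomial $\bU$ is smooth on $\Supp(K)$, and $|f(\bx_i)|\le M$ on $\Theta'$ (Assumption \ref{mainassump}(iv)), the map $\bx\mapsto (\bU(\tfrac{\bx_i-\bx}{h_k})K(\tfrac{\bx_i-\bx}{h_k}))_s f(\bx_i)$ is $\cst h_k^{-1}$-Lipschitz uniformly in $\bx_i$ and $s$, hence
\[
\Big|\sum_{i=1}^{k}\big(F_i^{(s)}(\bx)-F_i^{(s)}(\by)\big)\Big| \,\le\, \cst k h_k^{-1}\norm{\bx-\by}, \qquad \forall\, \bx,\by\in\com.
\]
This plays the role of \eqref{eq:B3} in Lemma \ref{boundBsup}. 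I would then take an $\epsilon$-net $\mathcal{N}$ of $\com$ with $\epsilon = \text{diam}(\com)\,h_k^{d/2+1}k^{-1/2}$ and split the supremum as in \eqref{eq:B4}, reducing the estimate to a supremum over $\mathcal{N}$ plus a negligible Lipschitz remainder of order $h_k^{-4d-4}\epsilon^4 = \cst\,h_k^{-2d}k^{-2}$.

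For the pointwise control, Lemma \ref{boundCX} already provides (via its proof) the exact Bernstein-type tail
\[
\Prob\!\left[\Big|k^{-1}h_k^{-d}\sum_{i=1}^{k}F_i^{(s)}(\bx)\Big|\ge t\right] \,\le\, 2\exp\!\Big(-\frac{t^2 k h_k^d}{\cst(1+t)}\Big),
\]
since $f$ is bounded; this is the bounded-summand analogue of \eqref{eq:B2}. A union bound over $\mathcal{N}$, whose cardinality is at most $(\mathrm{diam}(\com)/\epsilon+1)^d \le \cst(k/h_k^d)^{d/2}$, produces the extra $\log(k+1)$ factor exactly as in \eqref{eq:B5}, and the resulting tail integrates (as in \eqref{eq:B6}) to $\cst h_k^{-2d}k^{-2}\log(k+1)^2$. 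Combining this with the Lipschitz remainder yields the claim.

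I do not anticipate any real obstacle: the only technical check is the Lipschitz bound on $\bx\mapsto \bU(\tfrac{\bx_i-\bx}{h_k})K(\tfrac{\bx_i-\bx}{h_k})f(\bx_i)$, which follows immediately from the Lipschitz assumption on $K$, the smoothness of $\bU$ on $\Supp(K)$, and the uniform bound on $f$; everything else is a line-by-line transcription of the proof of Lemma \ref{boundBsup} (or equivalently Lemma \ref{boundsupsubG}, with the noise $\xi_i$ replaced by the bounded quantity $f(\bx_i)$, which actually simplifies the moment bookkeeping).
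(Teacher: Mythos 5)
Your proposal is correct and coincides with the paper's intended argument: the paper explicitly omits the proof of Lemma~\ref{boundC}, stating that it follows the proof of Lemma~\ref{boundBsup} line by line, using (as in Lemma~\ref{boundCX}) that $f(\bx_i)$ is uniformly bounded by Assumption~\ref{mainassump}(iv). Your Lipschitz bound, $\epsilon$-net with $\epsilon = \mathrm{diam}(\com)h_k^{d/2+1}k^{-1/2}$, Bernstein tail, union bound, and tail integration reproduce exactly the steps \eqref{eq:B3}--\eqref{eq:B6} of that proof with the vector-valued summands $F_i^{(s)}(\bx)$ in place of the matrix entries.
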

The proof of this lemma is omitted since, for the same reason as in Lemma \ref{boundCX}, it is  analogous to the proof of Lemma \ref{boundBsup}.

\begin{lemma}\label{covariance}
Let $k\in[n]$, and $h_k = \left(\frac{\log(k+1)}{k}\right)^{\frac{1}{2\beta+d}}$. Let Assumptions \ref{ass:kernel}, \ref{mainassump}(iv) and \ref{distass} hold.  If $kh_k^d\geq \lambda^{-2}\log(k+1)$, then
\begin{align*}
   \sup_{\bx\in\com} \Exp\left[\norm{B_{k,\lambda}(\bx)^{-1} - \left(\Exp[B_{k,\lambda}(\bx)]\right)^{-1}}_{\op}\norm{\bC_k(\bx)-\Exp\left[\bC_k(\bx)\right]}\right]\leq \cst h_k^{-d}k^{-1}\enspace.
\end{align*}
\end{lemma}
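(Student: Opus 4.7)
The plan is to bound the expectation of the product via the Cauchy--Schwarz inequality and then invoke the second-moment bounds already established in Lemmas \ref{boundB} and \ref{boundCX}.

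First, I would write, for every $\bx\in\com$,
\begin{align*}
&\Exp\!\left[\bigl\|B_{k,\lambda}(\bx)^{-1} - (\Exp[B_{k,\lambda}(\bx)])^{-1}\bigr\|_{\op}\,\bigl\|\bC_k(\bx)-\Exp[\bC_k(\bx)]\bigr\|\right] \\
&\qquad\leq \Bigl(\Exp\!\left[\bigl\|B_{k,\lambda}(\bx)^{-1} - (\Exp[B_{k,\lambda}(\bx)])^{-1}\bigr\|_{\op}^{2}\right]\Bigr)^{1/2} \Bigl(\Exp\!\left[\bigl\|\bC_k(\bx)-\Exp[\bC_k(\bx)]\bigr\|^{2}\right]\Bigr)^{1/2}.
\end{align*}

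Next, I would control each factor. The assumption $kh_k^d\ge \lambda^{-2}\log(k+1)\ge \lambda^{-2}$ lets me apply \eqref{eq:lem18_3} in Lemma \ref{boundB}, giving $\sup_{\bx\in\com}\Exp\bigl[\|B_{k,\lambda}(\bx)^{-1}-(\Exp[B_{k,\lambda}(\bx)])^{-1}\|_{\op}^{2}\bigr]\le \cst\, h_k^{-d}k^{-1}$. For the second factor, the condition $kh_k^d\ge 1$ is also in force, so \eqref{eq:lem:boundCX} in Lemma \ref{boundCX} applies; then Jensen's inequality yields
\[
\sup_{\bx\in\com}\Exp\bigl[\|\bC_k(\bx)-\Exp[\bC_k(\bx)]\|^{2}\bigr]\le \Bigl(\sup_{\bx\in\com}\Exp\bigl[\|\bC_k(\bx)-\Exp[\bC_k(\bx)]\|^{4}\bigr]\Bigr)^{1/2}\le \cst\, h_k^{-d}k^{-1}.
\]

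Multiplying the two bounds of order $\sqrt{h_k^{-d}k^{-1}}$ produces the claimed rate $\cst\, h_k^{-d}k^{-1}$, uniformly in $\bx\in\com$. There is no genuine obstacle here: the lemma is essentially a bookkeeping step that packages the pointwise moment bounds of Lemmas \ref{boundB} and \ref{boundCX} into the form needed to control ``term~III'' in the proof of Lemma \ref{bias1}. The only minor care is to verify that the hypothesis $kh_k^d\ge \lambda^{-2}\log(k+1)$ supplied here is indeed strong enough to trigger \eqref{eq:lem18_3} (which asks only $kh_k^d\ge \lambda^{-2}$), and this is immediate since $\log(k+1)\ge 1$ for $k\ge 2$.
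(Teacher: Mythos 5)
Your proposal matches the paper's proof exactly: the paper states that the lemma "follows immediately from the Cauchy--Schwarz inequality, \eqref{eq:lem18_3} and \eqref{eq:lem:boundCX}," which is precisely the decomposition you carry out. The only thing you add is the (correct) Jensen step to pass from the fourth-moment bound in \eqref{eq:lem:boundCX} to a second-moment bound, which the paper leaves implicit.
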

This lemma follows immediately from the Cauchy-Schwarz inequality, \eqref{eq:lem18_3} and \eqref{eq:lem:boundCX}.

Finally,  we recall a lemma from \cite{akhavan2020} used in the proof of Theorem \ref{pointest}.
\begin{lemma}[\cite{akhavan2020}  \label{lemm:akhavan_recursive}
Lemma D.1] Let $\{b_k\}$ be a sequence of real numbers such that for all integers $k\geq 2$, 
    \begin{align*}
        b_{k+1}\leq \left(1-\frac{1}{k}\right)b_k+\sum_{i=1}^{N}\frac{a_i}{k^{p_i+1}}\enspace,
    \end{align*}
where $0<p_i<1$ and $a_i\geq 0$ for $1\leq i\leq N$. Then for $k\geq 2$ we have
\begin{align*}
    b_k\leq \frac{2b_2}{k} + \sum_{i=1}^{N}\frac{a_i}{(1-p_i)k^{p_i}}\enspace.
\end{align*}
\end{lemma}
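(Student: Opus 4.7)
The plan is to establish the bound by induction on $k$, using the base case $k=2$ (trivial, since the right-hand side already contains $2b_2/2 = b_2$) and the inductive step from $k$ to $k+1$. The key elementary inequalities I will need are $(k-1)(k+1) \leq k^2$ to handle the contribution involving $b_2$, and a Bernoulli-type bound $((k+1)/k)^{p} \leq 1 + p/k \leq 1/(1-p/k)$ valid for $p \in (0,1)$ to handle each of the terms in the sum over $i$.

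Assuming the bound $b_k \leq 2b_2/k + \sum_i a_i/[(1-p_i) k^{p_i}]$ and substituting into the hypothesized recursion, after expanding I expect to obtain
\begin{align*}
b_{k+1} \leq \frac{2b_2(k-1)}{k^2} + \sum_i \frac{a_i\,(k-p_i)}{(1-p_i)\, k^{p_i+1}}.
\end{align*}
I then plan to verify term by term that the right-hand side above is dominated by $2b_2/(k+1) + \sum_i a_i/[(1-p_i)(k+1)^{p_i}]$. The first piece reduces to the identity $(k-1)(k+1) \leq k^2$, while for the sum over $i$ I need to check $(k-p_i)/k^{p_i+1} \leq 1/(k+1)^{p_i}$, which after rearrangement is exactly $((k+1)/k)^{p_i} \leq k/(k-p_i)$. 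This is precisely where the Bernoulli-type inequality applies.

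The only genuinely non-trivial step is this Bernoulli comparison: by concavity of $x \mapsto x^{p_i}$ on $[0,\infty)$ for $p_i \in (0,1)$ I obtain $((k+1)/k)^{p_i} \leq 1 + p_i/k$, and separately $1 + p_i/k \leq 1/(1-p_i/k) = k/(k-p_i)$, which is the elementary inequality $1+y \leq 1/(1-y)$ applied to $y = p_i/k \in (0,1)$. Chaining these two bounds closes the induction. I do not anticipate any real obstacle; this is a standard Gronwall-type argument for a linear recursion with subpolynomial forcing term, and the main care is just in bookkeeping the constants precisely to match those stated. An alternative route, based on multiplying through by the integrating factor $k$ so that $(k-1)b_k$ telescopes and then using the integral comparison $\sum_{k=2}^{n-1} k^{-p_i} \leq (n-1)^{1-p_i}/(1-p_i)$, also works but yields a looser constant in the second summand, so I will prefer the induction.
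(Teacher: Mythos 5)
The paper does not prove this lemma itself; it cites it as Lemma D.1 of \cite{akhavan2020}, so there is no in-paper proof to compare against. Your induction is a correct, self-contained proof. The base case is immediate, the substitution of the inductive hypothesis into the recursion produces exactly $\frac{2b_2(k-1)}{k^2} + \sum_i \frac{a_i(k-p_i)}{(1-p_i)k^{p_i+1}}$ as you computed, and the two required comparisons both check out: the first term is handled by $(k-1)(k+1)\leq k^2$, and the second reduces to $\bigl(\tfrac{k+1}{k}\bigr)^{p_i} \leq \tfrac{k}{k-p_i}$, which follows from Bernoulli $\bigl(1+\tfrac1k\bigr)^{p_i}\leq 1+\tfrac{p_i}{k}$ together with $1+y\leq (1-y)^{-1}$ for $y=p_i/k\in(0,1)$. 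Your observation that the telescoping/integrating-factor route gives a looser constant in the second summand is also accurate.

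One small point worth flagging: the comparison $\frac{2b_2(k-1)}{k^2}\leq\frac{2b_2}{k+1}$ requires $b_2\geq 0$ (and indeed the lemma as literally stated is false for $b_2<0$: take all $a_i=0$, $b_2=-1$, equality in the recursion, then $b_3\leq -\tfrac12$ but the claimed bound would demand $b_3\leq -\tfrac23$). This nonnegativity is implicit in the cited lemma and is satisfied in the paper's application (where $b_k$ is a rescaled nonnegative risk), but it deserves to be stated in the hypothesis.
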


\end{document}